\newenvironment{DIFnomarkup}{}{} 
\def\expecti{\E}
\def\nexp{K_\expecti}
\def\bestnexp{\hat K_\expecti}
\def\bestzexp{\hat z_\expecti}
\def\asi{{\text{\upshape a.s.}}}
\def\nas{K_\asi}
\def\bestnas{\hat K_\asi}
\def\bestzas{\hat z_\asi}
\newtheoremstyle{mytheorem}{}{}{\itshape}{}{\bfseries}{:\hbox{ }}{\newline}{}
\newtheoremstyle{mydefinition}{}{}{}{}{\bfseries}{:\hbox{ }}{\newline}{}
\newtheoremstyle{myproof}{}{}{}{}{\bfseries}{:\hbox{ }}{\newline}{#1#3}
\theoremstyle{mytheorem}
\newtheorem{thm}{Theorem}[]
\newtheorem{cor}[thm]{Corollary}
\newtheorem{lem}[thm]{Lemma}
\newtheorem{prop}[thm]{Proposition}
\theoremstyle{mydefinition}
\theoremstyle{myproof}
\newtheorem{rmk}{Remark}
\renewcommand{\P}{\mathbb{P}}
\newcommand{\Pb}{\mathbb{P}}
\newcommand{\N}{\mathbb{N}}
\newcommand{\R}{\mathbb{R}}
\newcommand{\Rb}{\mathbb{R}}
\newcommand{\E}{\mathbb{E}}
\newcommand{\Eb}{\mathbb{E}}
\newcommand{\Pt}{\tilde{\mathbb{P}}}
\newcommand{\Qt}{\tilde{\mathbb{Q}}}
\newcommand{\Fg}{\mathcal{F}}
\newcommand{\Gg}{\mathcal{G}}
\newcommand{\Ft}{\tilde{\mathcal{F}}}
\newcommand{\Gt}{\tilde{\mathcal{G}}}
\newcommand{\hs}{\hspace{2mm}}
\newcommand{\hsl}{\hspace{1mm}}
\newcommand{\ind}{\mathbbm{1}}
\newcommand{\diffd}{\mathrm{d}}
\newcommand{\eps}{\varepsilon}
\author{Julien Berestycki\thanks{LPMA, UPMC, 4 place Jussieu, 75005 Paris, France. Email: \texttt{julien.berestycki@upmc.fr}}, \'Eric Brunet\thanks{LPS-ENS, UPMC, CNRS, 24 rue Lhomond, 75231 Paris Cedex 05, France. Email: \texttt{Eric.Brunet@lps.ens.fr}}, John W.~Harris\thanks{Department of Mathematics, University of Bristol, University Walk, Bristol BS8 1TW, UK. Email: \texttt{john.harris@bristol.ac.uk}},\\ Simon C.~Harris\thanks{Department of Mathematical Sciences, University of Bath, Bath BA2 7AY, UK. Email: \texttt{S.C.Harris@bath.ac.uk}}, Matthew I.~Roberts\thanks{Department of Mathematical Sciences, University of Bath, Bath BA2 7AY, UK. Email: \texttt{mattiroberts@gmail.com}}}
\title{Growth rates of the population in a branching Brownian motion with an inhomogeneous breeding potential}
\begin{document}

\maketitle

\begin{abstract}
We consider a branching particle system where each particle moves as an independent Brownian motion and breeds at a rate proportional to its
distance from the origin raised to the power $p$, for $p\in[0,2)$. The asymptotic behaviour of the right-most particle for this system is already known; in this article we give large deviations probabilities for particles following ``difficult'' paths, growth rates along ``easy'' paths, the total population growth rate, and we derive the optimal paths which particles must follow to achieve this growth rate.
\end{abstract}

\section{Introduction and heuristics}

\subsection{The model}

We study a branching Brownian motion (BBM) in an inhomogeneous breeding
potential on $\R$. Fix $\beta>0$, $p\in[0,2)$, and a random variable $A$,
which takes values in $\{1,2,\ldots\}$, satisfying $\E[A\log A]<\infty$. We initialise our branching process with a single particle at the origin. Each particle $u$, once born, moves as a Brownian motion, independently of all other particles in the population. Each particle $u$ alive at time $T$ dies with instantaneous rate $\beta |X_u(T)|^p$, where $X_u(T)$ is the spatial position of particle $u$ (or of its ancestor) at time $T$. Upon death, a particle $u$ is replaced by a random number $1+A_u$ of offspring in the same spatial position, where each $A_u$ is an independent copy of $A$.  We define $m:=\E[A]$, the average increase in the population size at each branching event. We denote by $N(T)$ the set of particles alive at time $T$.
We let $\P$ represent the probability law, and $\E$ the corresponding expectation, of this BBM.

The case $p=2$ is critical for this BBM: if the breeding rate were instead $\beta|\cdot|^p$ for $p>2$, it is known from It\^{o} and McKean~\cite{ito_mckean:diffusion_procs_sample_paths} that the population explodes in finite time, almost surely. For $p=2$, the \emph{expected} number of particles explodes in finite time, but the population remains finite, almost surely, for all time.

Branching Brownian motions are closely associated with certain partial differential equations. In particular, for the above BBM model, the McKean representation tells us that
\[
v(T,x):=\E \left( \prod_{u\in N(T)} f(x+X_u(T))\right)
\]
solves the equation
\begin{equation}
\frac{\partial v}{\partial T} = \frac12 \frac{\partial^2 v}{\partial x^2} +\beta |x|^p (G(v)-v)
\label{eqMcKean}
\end{equation}
with the initial condition $v(0,x)=f(x)$, where $G(s):=\E(s^A)$ is the generating function of the offspring distribution $A$. 
In the case of constant branching rate $(p=0)$, this is known as the  Fisher-Kolmogorov-Piscounov-Petrovski (FKPP) reaction-diffusion equation.

An object of fundamental importance in the study of branching diffusions is the right-most particle, defined as  $R_T:= \max_{u\in N(T)} X_u(T)$. 
Standard BBM, with binary branching at a constant rate (that is, $p=0$ and $G(s)=s^2$), has been much studied. 
In this case, it is well known that the linear asymptotic $\lim_{T\to\infty} R_T / T=\sqrt{2\beta}$ holds almost surely.
The distribution function of the right most particle position solves the FKPP equation with Heaviside initial conditions, 
and  it is known that $\P(R_T\geq m(T) + x) \rightarrow w(x)$ where $w$ is a travelling-wave solution of \eqref{eqMcKean} and $m(T)$ is the median for the rightmost particle position at time $T$.
Sub-linear terms for the asymptotic behaviour of $m(T)=\sqrt{2\beta} T - 3/(2\sqrt{2\beta}) \log T + O(1)$ were found by Bramson~\cite{bramson:maximal_displacement_BBM} and~\cite{bramson:convergence_Kol_eqn_trav_waves}. 
See also the recent shorter probabilistic proofs by Roberts~\cite{roberts:simple_path_BBM}, and corresponding results for branching random walk by Aidekon \cite{aidekon:convergence_law_min_brw} and Hu and Shi \cite{hu_shi:minimal_position}.
For approaches using partial differential equation theory, see the recent short proof by Hamel {\em et al.~}\cite{hamel_et_al} and an impressive higher order expansion due to Van Saarloos \cite{van_saarloos}.
Detailed studies of the paths followed by the  right-most particles have been carried out by Arguin {\em et al.~}\cite{arguin_et_al:extremal_BBM, arguin_et_al:genealogy_BBM}, and by Aidekon {\em et al.~}\cite{aidekon_et_al:BBM_tip}. 

For $p\in(0,2)$, right most particle speeds much faster than linear occur and Harris and Harris \cite{harris_harris:inhom_breeding} found an asymptotic for $R_T$ using probabilistic techniques involving additive martingales and changes of measure.

\begin{thm}[Harris, Harris \cite{harris_harris:inhom_breeding}]\label{rightmost_thm}
For $p\in [0,2)$,
\[\lim_{T \to\infty} \frac{R_T}{T^\frac{2}{2-p}} = \left(\frac{m\beta}{2}(2-p)^2\right)^{\frac{1}{2-p}}\]
almost surely.
\end{thm}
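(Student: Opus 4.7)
Write $\alpha := 2/(2-p)$ and $c := (m\beta(2-p)^2/2)^{1/(2-p)}$, so the theorem asserts $R_T/T^\alpha \to c$ almost surely. The plan has two halves: matching upper and lower bounds on $\limsup$ and $\liminf$. The guiding heuristic is that for a smooth path $f:[0,T]\to\R$ with $f(0)=0$, the expected number of particles tracking $f$ and reaching $f(T)$ is of order $\exp(-\tfrac12\int_0^T f'(s)^2\,ds + m\beta\int_0^T|f(s)|^p\,ds)$; the parabolic ansatz $f(t)=y\,t^\alpha$ makes both terms scale like $T^{(p+2)/(2-p)}$, and a short calculus check identifies $y=c$ as the critical value at which the net exponent vanishes (positive for $y>c$, negative for $y<c$).

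\textbf{Upper bound.} A many-to-one lemma gives
\[
\E\bigl[\#\{u\in N(T):X_u(T)\ge y\}\bigr] \;=\; \E_0^{\mathrm{BM}}\!\Bigl[\ind_{\{B_T\ge y\}}\exp\!\Bigl(m\beta\!\int_0^T |B_s|^p\,ds\Bigr)\Bigr].
\]
Taking $y=(c+\eps)T^\alpha$ and rescaling $\phi(s):=B_{sT}/T^\alpha$, the right-hand side becomes a Laplace-type integral on exponential scale $T^{(p+2)/(2-p)}$. A Schilder/Freidlin--Wentzell large-deviations argument evaluates the exponential rate as the strictly positive infimum of $\tfrac12\int_0^1\phi'(s)^2\,ds - m\beta\int_0^1|\phi(s)|^p\,ds$ over $\phi$ with $\phi(0)=0$, $\phi(1)\ge c+\eps$. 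Markov's inequality together with Borel--Cantelli along integer times (plus a soft control of maximal BBM fluctuations between consecutive integers, obtained from the same first-moment estimate) then yields $\limsup_T R_T/T^\alpha \le c+\eps$ almost surely.

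\textbf{Lower bound.} Fix $c'<c$ and let $f(t)=c'\,t^\alpha$, regularised near $t=0$ to avoid the singularity of $f'$. I would introduce the additive martingale
\[
M_T \;=\; \sum_{u\in N(T)}\exp\!\Bigl(\int_0^T f'(s)\,dX_u(s) \;-\; \tfrac12\!\int_0^T f'(s)^2\,ds \;-\; m\beta\!\int_0^T |X_u(s)|^p\,ds\Bigr),
\]
which by the many-to-one lemma and standard Girsanov has $\E[M_T]=1$. A spine decomposition identifies $M_T$ as the Radon--Nikodym derivative of a tilted measure $\Pt$ under which a distinguished ``spine'' particle satisfies $dX_{\mathrm{sp}}=f'(t)\,dt+dW_t$ (so its position concentrates on $f(T)=c'T^\alpha$) and branches at the enhanced rate $(1+m)\beta|X_{\mathrm{sp}}|^p$, while non-spine offspring evolve as ordinary BBMs of our type. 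A Lyons-type argument then shows that, under the hypothesis $\E[A\log A]<\infty$, $M_T$ is uniformly integrable and converges almost surely to a limit $M_\infty$ which is strictly positive on the non-extinction event. Strict positivity of $M_\infty$ forces a non-vanishing family of particles whose positions stay close to $c'T^\alpha$ for all large $T$, so $\liminf_T R_T/T^\alpha \ge c'$; letting $c'\uparrow c$ closes the lower bound.

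\textbf{Main obstacle.} The hardest ingredient is the uniform integrability and strict positivity of $M_\infty$. Because the guiding path $f$ is non-linear, $M_T$ is driven by a time-dependent Girsanov drift; the classical constant-drift additive martingales for BBM (and Biggins' $L^p$ theorems for BRW) do not apply verbatim, and one must carefully adapt the spine $L\log L$ machinery to this inhomogeneous setting. Specifically, one needs to show that under $\Pt$ the spine stays close enough to $f$ for the integrated potential $\int_0^T|X_{\mathrm{sp}}(s)|^p\,ds$ to concentrate at its deterministic value, and to estimate the contribution of the infinitely many subtrees attached at spine branching events: the summability of these contributions is exactly what the $\E[A\log A]<\infty$ hypothesis provides. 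Once these estimates are in place, both bounds close and the theorem follows.
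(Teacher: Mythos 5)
First, note that Theorem~\ref{rightmost_thm} is not proved in this paper at all: it is quoted from Harris and Harris \cite{harris_harris:inhom_breeding}, whose argument is based on additive martingales and changes of measure. Your lower bound is in that same spirit (and close to the spine machinery of Sections~\ref{spine_section}--\ref{growth_paths_sec} here), but your upper bound contains a genuine error.

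The problem is the first-moment/Borel--Cantelli step. For $p\in(0,2)$ the quantity you call ``the strictly positive infimum'' is in fact negative for small $\eps$: the infimum of $\tfrac12\int_0^1\phi'(s)^2\,\diffd s-m\beta\int_0^1|\phi(s)|^p\,\diffd s$ over $\phi(0)=0$, $\phi(1)\ge \bar z+\eps$ equals $-\sup_{z\ge\bar z+\eps}\nexp(z)$, and $\nexp(\bar z)>0$ whenever $p>0$ (the extremal path $r$ solves $r''=+m\beta p r^{p-1}$, not the Euler--Lagrange equation $h''+m\beta p h^{p-1}=0$, so $\nexp(\bar z)=K(h_{\bar z},1)>K(r,1)=0$; concretely, for $p=1$ one has $\bar z=m\beta/2$ but positive \emph{expected} growth for all $|z|<m\beta(1/2+1/\sqrt3)$). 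Hence $\E\big[\#\{u\in N(T):X_u(T)\ge(\bar z+\eps)T^{2/(2-p)}\}\big]$ grows like $\exp\big(T^{\frac{2+p}{2-p}}\sup_{z\ge\bar z+\eps}\nexp(z)\big)$, which is exponentially \emph{large} for small $\eps$, and Markov plus Borel--Cantelli gives nothing. This is precisely the expectation-versus-almost-sure discrepancy that is the central theme of the present paper. A correct upper bound must see the whole path and its worst intermediate bottleneck: any path ending above $\bar z$ has $\inf_{s\le 1}K(f,s)<0$, and one needs a presence-probability estimate of the form $\log\P(\text{some particle follows }f)\lesssim T^{\frac{2+p}{2-p}}\inf_{s\le t}K(f,s)$ (Theorem~\ref{large_devs_thm} here), or, as in \cite{harris_harris:inhom_breeding}, the almost sure convergence/boundedness of suitable additive martingales, which rules out particles ever crossing a space-time curve slightly above $r(\cdot)$; a fixed-time expectation at the endpoint cannot do this. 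Your first-moment argument is only valid in the classical case $p=0$, where $\nexp(\bar z)=0$.

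A secondary, smaller gap: in the lower bound, the step ``strict positivity of $M_\infty$ forces particles near $c'T^{2/(2-p)}$'' is not automatic, because the weights in $M_T$ depend on the whole path, and dominant contributions could a priori come from particles far from $f$; one must show that the contribution to $M_T$ from particles whose paths leave a tube around $f$ vanishes (this is essentially what the tube truncation $\zeta_T$ and the uniform integrability argument of Section~\ref{spine_section}, or the corresponding estimates in \cite{harris_harris:inhom_breeding}, accomplish). With that repaired, the lower half of your plan matches the known route; the upper half needs to be replaced as described above.
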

\noindent (Note that, as above, the theorems given in the present paper are all written for $p\in[0,2)$. However most of our results were already known in the case $p=0$.)

In this paper we study in more detail the paths followed by particles in the BBM. Theorem~\ref{rightmost_thm} suggests a rescaling of time and space, and we consider whether particles follow paths which, after rescaling, lie in a particular subset of $C[0,1]$. In section~\ref{S:main results}, we give large deviations probabilities for particles following ``difficult" rescaled paths 
as well as results on the almost sure growth rates for the number of particles following any ``easy" path. From these results we can derive the growth rate of the total number of particles in the BBM, and find the paths which particles must follow to realise this growth rate; this involves solving certain path optimisation problems subject to integral constraints,  the solutions to which are not obvious, but nevertheless can be found explicitly and have intuitive probabilistic interpretations. A surprising and very significant feature arising from the (unbounded) spatially dependent branching rate of this model is the fact that the expected number of particles and typical number of particles following paths do not match, even on the exponential scale.

Although this work is the natural sequel to \cite{harris_harris:inhom_breeding}, spatially dependent branching rates have not often been studied in detail. 
See Git {\em et al.~}\cite{git_et_al:exp_growth_rates}, and Lalley and Sellke \cite{LS1, LS2} for a case with bounded breeding potential. Other studies of branching processes with time inhomogeneous environments include recent works by Fang and Zeitouni \cite{fang_zeitouni:brw_inhom, fang_zeitouni:slowdown}, where analogous path optimisation problems also appear.
Recent developments in the study of spatially inhomogenous versions of the FKPP equation from the PDE's perspective include, for example, the {periodic} environments in Hamel {\emph et al.~} \cite{hamel_et_al2}. A key technique which is used in \cite{hamel_et_al, hamel_et_al2} is to relate the non-linear PDE problem to a free boundary linearised PDE one. In fact, such free boundary problems are intimately related to the probabilistic constrained path optimisation problems (discussed in Section \ref{S:pdelinks}).
We note that, with the exception of \cite{harris_harris:inhom_breeding, git_et_al:exp_growth_rates}, the above articles are all concerned with \emph{bounded} environments.

Unbounded branching rates lead to unusual features and pose considerable technical difficulties, much as their corresponding unbounded non-linear differential operators would. 
One manifestation of the unbounded branching rates is the position of the right-most particle growing faster than linearly in time (as in Theorem \ref{rightmost_thm});
another is the disagreement of expected and typical particle behaviours. 

We start by giving a very rough heuristic explanation for some of our results. 
The technical details, relevant definitions and precise statements of our main results will be given in section \ref{S:main results}.
We have strived to make the heuristics as clear as possible in the hope that the reader can gain a good understanding of our main results without necessarily having
to read the technical details in the rigorous proofs of sections \ref{spine_section}-\ref{large_devs_sec}. 
The solution of the constrained path optimisation problem and it properties are found in sections \ref{sec:optpaths} and \ref{further_optpaths_sec}.


\subsection{Heuristics}

Whilst we are mainly interested in the almost sure behaviour of the inhomogeneous branching Brownian motion, this is typically much harder to obtain than the expected behaviour.
However, we will be able to get a very good intuitive understanding of the {almost sure} behaviour by carefully considering {expectations} of the number of particles that travel close to given trajectories.

In a sense that Schilder's Theorem \cite{varadhan:large_devs_apps} from Large Deviation theory can make precise,
the small probability that a Brownian motion $B$ manages to stay `close' to some given trajectory $F$ is {very roughly} given by
\[
\P(B \hbox{ stays `close' to } F \hbox{ during } [0,T]) \approx \exp\left( -\frac{1}{2}\int_0^T F'(s)^2 \diffd s\right).
\]
where $F:[0,T]\to\R$ is suitably `nice' with $F(0)=0$ and $T$ is some large time.

Since any particles that are close to trajectory $F$ at time $s$ will give birth to an average of $m$ new offspring at a rate close to $\beta |F(s)|^p$ ,
the expected total number of particles that have stayed close to trajectory $F$ up to time $T$ will very roughly be given by
\begin{equation}
\label{E:approx1}
\E\big[\#\{u\in N(T) : X_u \text{ `close' to } F \}\big] \approx \exp\left( \int_0^T \left[m\beta|F(s)|^p - \frac{1}{2} F'(s)^2\right] \diffd s \right).
\end{equation}
Heuristically, we can think of the number of particles travelling along
a `nice' trajectory $F$ as behaving roughly like a time-dependent
birth-death process (see~\cite{Harris:branching_processes})
with a birth rate $m\beta|F(s)|^p$ and a death rate $\frac{1}{2} F^\prime(s)^2$.
It is now natural to look for a scaling of paths where the birth and death rates are of the same order of magnitude.
That is, if we consider trajectories of the form
 \begin{equation}
 F(s) = T^{\frac{2}{2-p}} f\left(\frac s T\right) 
 \label{rescale}
 \end{equation}
where $T$ is large and $f:[0,1]\to\R$ is some fixed function, then \eqref{E:approx1} leads to
\begin{equation}\label{approx_rescaled}
\log \E\big[\#\{u\in N(T) : u \text{ follows } f\}\big] \sim T^{\frac{2+p}{2-p}}\int_0^1\left[m\beta |f(s)|^p - \frac{1}{2} f'(s)^2\right] \diffd s,
\end{equation}
where by `$u$ follows $f$' we mean that the rescaled particle path, $T^{-2/(2-p)}X_u(sT)$, remains within some very small distance of the given rescaled path $f(s)$ for all $s\in[0,1]$.
Essentially, this is Theorem~\ref{expected_growth_paths_thm} which reveals how the expected number of particles varies along a given scaled up path: heuristically,
for $t\in[0,1]$ and $T$ large,
\begin{equation}
\log \E\big[\#\{u\in N(tT) : \text{$u$ follows $f$ up to time $t$}\}\big]
\sim T^{\frac{2+p}{2-p}} K(f,t)
\label{heuristic5}
 \end{equation}
where
\begin{equation}
K(f,t):=\int_0^t \Big[m\beta |f(s)|^p - \frac{1}{2} f'(s)^2\Big] \diffd s  
\end{equation}
for `nice' functions $f:[0,1]\rightarrow\R$ with $f(0)=0$.
The functional $K(f,t)$ is of fundamental importance for the inhomogeneous BBM and will play a crucial role throughout this paper.

We will show that the expected number of particles which end up near the rescaled position $z$ (corresponding to actual position $T^{2/(2-p)}z$)
grows like the expected number of particles following some optimal rescaled path $h_z$ ending at $z$,
\begin{equation}
\log\E\big[\#\{u\in N(T) : \text{rescaled path of $u$ ends near $z$}\}\big]
\sim {T^{\frac{2+p}{2-p}}K(h_z,1)},
\end{equation}
where $K(h_z,1)=\sup_{f}\big\{K(f,1): f(1)=z\big\}$. 
The optimal function $h_z$ satisfies the corresponding Euler-Lagrange equation $h_z^{\prime\prime} + mp h_z^{p-1}=0$ with $h_z(0)=0$ and $h_z(1)=z$.
Optimising over $z$ then suggests that the expected total population size satisfies
\begin{equation}
\log\E\big[N(T)\big]
\sim {T^{\frac{2+p}{2-p}} \sup_{z} K(h_z,1)}
= {T^{\frac{2+p}{2-p}} \sup_{f} K(f,1)}.
\end{equation}
Theorem~\ref{unconstrained_thm} confirms these heuristics 
and explicitly identifies the expected total population growth rate.

However, a more surprising fact is that these results on the expected number of particles are {\emph not}  representative of a typical realisation of the system.
Indeed, a dominant contribution to the expected number of particles at large time $T$ can come from vanishingly rare events where particles go very far away from the origin to take advantage of high reproduction rates.
Note that, for any $t\in[0,1]$,
\begin{align}
\P(\text{some particle follows $f$ up to $t$}) &= \inf_{s\in[0,t]}\P(\text{some particle follows $f$ up to time $s$}) \notag\\
& \leq \inf_{s\in[0,t]} \E\big[ \#\{ u\in N(sT) : \text{$u$ follows $f$ up to $s$}\} \big],
\notag\\& \lesssim \exp\Big[T^{\frac{2+p}{2-p}}\inf_{s\in[0,t]}K(f,s)\Big].
\label{previouslyunumbered}
\end{align}
If $\inf_{s\in[0,t]}K(f,s)<0$, then the path $f$ is `difficult' and it is very unlikely to be observed in a typical realisation, even if $K(f,1)>0$ so that the \emph{expected} number of particles alive at time~$T$ having followed that path is very large.
We are in fact able to show  that, for a difficult path, the last inequality in~\eqref{previouslyunumbered} will be attained up to leading order in the exponent, and hence that `difficult' paths will satisfy
\begin{equation}
\log \P\left(\text{some particle follows $f$ up to $t$}\right)
\sim T^{\frac{2+p}{2-p}}\inf_{s\in[0,t]}K(f,s) < 0.
\end{equation}
This probability of presence result is stated rigorously in Theorem~\ref{large_devs_thm}.
Looking for the trajectory that travels the furthest without being `difficult' leads us to guess that the right-most particle boundary
satisfies $m\beta r(s)^p - \frac{1}{2} r'(s)^2\equiv 0$ with $r(0)=0$ (in agreement with Theorem~\ref{rightmost_thm}).

On the other hand, if we have not yet had any `difficult' points along the path, we might guess that the almost sure and the expected growth rates will still agree.
Indeed, Theorem~\ref{growth_paths_thm} will confirm that, roughly speaking, we almost surely have
\begin{multline}
\log \#\{u\in N(tT) : \text{$u$ follows $f$ up to $t$}\}\\\sim
T^{\frac{2+p}{2-p}} K(f,t)\qquad\text{if $K(f,s)\ge0$ for all $s\in[0,t]$}
.
\end{multline}
for any $t\in[0,1]$. Hence, if $K(f,s)$ is always non-negative there will almost surely be some particles following $f$ in the large $T$ limit.
On the other hand, if $K(f,s)$ becomes strictly negative for the first time at some time $\theta_0$ then it becomes exponentially unlikely that any particle makes it past this bottleneck;
$\theta_0$ corresponds to the extinction time along this `difficult' path $f$.

Finally, we anticipate that the almost sure number of particles which end up near the rescaled position $z$, with $|z|\le r(1)$, will grow like the expected number of particles following some optimal path $g_z$ that does not undergo any extinction:
\begin{equation}
\log\#\{u\in N(T) : \text{rescaled path of $u$ ends near $z$}\}
\sim {T^{\frac{2+p}{2-p}}K(g_z,1)},
\end{equation}
where $K(g_z,1)=\sup_f\big\{K(f,1) : f(1)=z, K(f,s)\ge0\text{ for all
\(s\in[0,1]\)}\big\}$.
The optimal path $g_z$ that gives rise to the vast majority of particles turns out to have two distinct phases.
Initially, it follows the trajectory of the right-most particle, thereby gaining optimal potential for future growth without becoming extinct.
Then, after some optimal intermediate time, it ``cashes in'' during the second phase, switching over to the path that maximises growth, which is the path
that satisfies $g_z^{\prime\prime} + mp g_z^{p-1}=0$ with $g_z(1)=z$. We will see that, in addition, the optimal path necessarily has a continuous derivative and this property determines the point at which one switches from one phase to the other.

The almost sure total number of particles in the system can now be recovered by optimising over $z$, with
\begin{equation*}
\log |N(T)| \sim T^{\frac{2+p}{2-p}} \sup_z K(g_z,1) = T^{\frac{2+p}{2-p}} \sup_f\big\{K(f,1) : K(f,s)\ge0\text{ for all $s\in[0,1]$}\big\}.
\end{equation*}
In particular, this will reveal that the almost sure population growth rate is strictly smaller than the expected population growth rate;
the constraint that the paths cannot have passed through any extinction times has a significant effect.


%
%
%
%

\section{Main results} \label{S:main results}

Fix a set $D\subset C[0,1]$ and $t\in[0,1]$. We are interested in the sets
\begin{equation}
N_T(D,t) := \left\{u \in N(t T) : \exists f \in D \hbox{ with } X_u(sT) = T^{\frac{2}{2-p}} f(s) \hs \forall s\in[0,t]\right\}
\label{NTD}
\end{equation}
for large $T$. We will typically consider sets of the form $D = B(f,\epsilon)$ for a given $f$ (the ball of $C[0,1]$ with centre $f$ and radius $\epsilon$)\footnote{In this paper, the space $C[0,1]$ of continuous functions on $[0,1]$ is always endowed with the $L_\infty$ topology.};
in this case $N_T(D,t)$ is the set of particles alive at time $t T$ whose rescaled paths up to that point have stayed within distance $\varepsilon $ of $f$.
Thus, for a given path $f$ which we keep rescaling in space and time according to large~$T$, $N_T(D,t)$ tells us how the population following that path grows and shrinks as $t$ varies between $0$ and $1$.

Define the class $H_1$ of functions by
\[H_1 := \left\{ f \in C[0,1] : \exists g \in L^2[0,1] \hbox{ with } f(t) = \int_0^t g(s)\, \diffd s \hs \forall t\in[0,1]\right\},\]
and to save on notation set $f'(s):=\infty$ if $f\in C[0,1]$ is not differentiable at the point $s$.
Observe that $f \in H_1$ implies $f(0)=0$.

We can now define $K$ precisely: for $f\in C[0,1]$ and $t\in[0,1]$,
\[
K(f,t) := \begin{cases}
\displaystyle\int_0^t \Big[ m\beta |f(s)|^p - \frac{1}{2} f'(s)^2 \Big]\diffd s   & \text{ if $f\in H_1$},
\\ - \infty & \text{otherwise}.\end{cases}
\]

We use throughout the paper the convention that $\inf\emptyset=+\infty$ and
$\sup\emptyset=-\infty$.

\subsection{Expected population growth}

Our first result is rather straightforward and gives the behaviour of the expectation of the number of particles following paths in some set.
\begin{thm}\label{expected_growth_paths_thm}
For any closed set $D\subset C[0,1]$ and $t\in[0,1]$,
\[\limsup_{T\to\infty} \frac{1}{T^{\frac{2+p}{2-p}}}\log\E|N_T(D,t)|\leq \sup_{f\in D} K(f,t),\]
and for any open set $A\subset C[0,1]$ and $t\in[0,1]$,
\[\liminf_{T\to\infty} \frac{1}{T^{\frac{2+p}{2-p}}}\log\E|N_T(A,t)|\geq \sup_{f\in A} K(f,t)\]
\end{thm}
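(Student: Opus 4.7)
The plan is to reduce $\E|N_T(D,t)|$ to a single-particle expectation using the many-to-one lemma, rescale space and time so that the problem becomes a Schilder-type large deviation at speed $\nu_T := T^{(2+p)/(2-p)}$, and then run a Varadhan asymptotic with a tail truncation to cope with the unbounded potential $\beta|x|^p$.

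The first step is the standard many-to-one formula for this BBM (see e.g.\ \cite{Harris:branching_processes}): for a non-negative path functional $\Phi$,
$$\E\Bigl[\sum_{u \in N(tT)} \Phi(X_u|_{[0,tT]})\Bigr] = \E_\xi\Bigl[\Phi(\xi|_{[0,tT]}) \exp\Bigl(m\beta \int_0^{tT} |\xi(s)|^p\,\diffd s\Bigr)\Bigr],$$
where $\xi$ is a standard BM from the origin and the weight $m\beta|x|^p$ accounts for the mean net population gain per unit time per particle. Taking $\Phi$ to be the indicator that the rescaled path on $[0,t]$ agrees with some $f\in D$, and introducing $\phi_T(s) := T^{-2/(2-p)} \xi(sT)$ for $s\in[0,1]$, Brownian scaling gives $\phi_T \stackrel{d}{=} T^{-(2+p)/(2(2-p))}\,\eta$ with $\eta$ a standard BM on $[0,1]$. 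By Schilder's theorem $\phi_T$ satisfies an LDP on $C[0,1]$ at speed $\nu_T$ with rate $I(\phi) = \tfrac{1}{2}\int_0^1 \phi'(s)^2\,\diffd s$ on $H_1$ and $+\infty$ elsewhere. A change of variables turns the exponent into $\nu_T F(\phi_T)$ with $F(\phi) := m\beta \int_0^t |\phi(s)|^p\,\diffd s$, continuous on $C[0,1]$. The indicator becomes $\mathbf{1}_{D^\sharp}(\phi_T)$ for $D^\sharp := \pi_t^{-1}(\pi_t D)$, where $\pi_t:C[0,1]\to C[0,t]$ is restriction; $D^\sharp$ inherits openness or closedness from $D$ (immediate for the balls actually used in the sequel, using a trivial continuous extension past $t$).

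The main obstacle is that $F$ is unbounded on $C[0,1]$, so the textbook Varadhan lemma does not apply off-the-shelf. My plan is to truncate at a level $M>0$: on $\{\|\phi_T\|_\infty \le M\}$ the functional $F$ is bounded by $m\beta t M^p$, and the standard LDP upper/lower bounds give the expected Laplace asymptotic $\sup\{F(\phi)-I(\phi) : \phi\in D^\sharp,\ \|\phi\|_\infty\le M\}$ at leading exponential order. On the tail $\{\|\phi_T\|_\infty > M\}$ I combine the Gaussian tail bound $\Pb(\|\phi_T\|_\infty > u) \le C\exp(-c\nu_T u^2)$ with the crude upper bound $F(\phi) \le m\beta t\|\phi\|_\infty^p$; integrating in $u$ (or a dyadic shell decomposition over $\|\phi_T\|_\infty\in[2^kM,2^{k+1}M]$) produces a contribution of order $\exp\bigl(\nu_T(m\beta t M^p - \tfrac{c}{2}M^2)\bigr)$, which becomes negligible once $M$ is large, precisely because $p<2$ forces the quadratic kinetic penalty to beat the subquadratic potential. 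This is exactly where the hypothesis $p<2$ enters, and is the structural reason the result (and indeed the model) degenerates at $p\geq 2$. Letting $M\to\infty$ after $T\to\infty$ removes the truncation.

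The last step is an algebraic simplification: since $F$ and $D^\sharp$ depend only on $\phi|_{[0,t]}$, the infimum of $I$ over extensions of $\phi|_{[0,t]}$ to $[0,1]$ is attained by continuing as a constant on $[t,1]$ with zero kinetic cost, so
$$\sup_{\phi\in D^\sharp}\bigl(F(\phi)-I(\phi)\bigr) \;=\; \sup_{f\in D} K(f,t),$$
which delivers the closed-set upper bound. The open-set lower bound is the easier half: fix a near-optimiser $f^* \in A$ with $K(f^*,t) > \sup_A K - \delta$, choose $\varepsilon>0$ small so that $B(f^*,\varepsilon)\subset A$, and apply the Schilder lower bound $\Pb(\phi_T\in B(f^*,\varepsilon)) \geq \exp\bigl(-\nu_T(I(f^*)+\delta)\bigr)$ together with the continuity of $F$ on the ball, which gives the matching lower bound directly without any need for tail control.
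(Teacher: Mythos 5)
Your reduction via the many-to-one lemma, the rescaling to speed $\nu_T=T^{\frac{2+p}{2-p}}$, and your treatment of the unbounded potential are sound: the sup-norm truncation at level $M$ combined with a dyadic-shell Gaussian estimate, where $p<2$ forces $m\beta t M^p-cM^2\to-\infty$, is a legitimate alternative to the paper's device of first discarding the wildly oscillating paths $F_N$ (Lemma \ref{extreme_lem}) and then working with the bound $R_N(\varepsilon)$ on equicontinuous remainders. The open-set lower bound is essentially the paper's own argument (near-optimal $f^*$, a small ball inside $A$, Schilder plus continuity of $\int_0^t|\cdot|^p\,\diffd s$, and zero-cost constant continuation on $[t,1]$), and it is complete.

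The gap is in the closed-set upper bound, at the step where you apply the Laplace--Varadhan upper bound to $D^\sharp\cap\{\|\phi\|_\infty\le M\}$ with $D^\sharp=\pi_t^{-1}(\pi_t D)$. The assertion that $D^\sharp$ inherits closedness from $D$ is false in general, because the restriction map does not carry closed sets to closed sets: take $f_n$ equal on $[0,t]$ to a fixed smooth $f$ plus high-frequency wiggles of amplitude $1/n$ (so that $K(f_n,t)\to-\infty$), and on $[t,1]$ rising to height $n$; then $D=\{f_n\}_n$ (or a union of shrinking closed balls around the $f_n$, to make the expectation nontrivial) is closed, yet $\overline{D^\sharp}$ contains every extension of $f|_{[0,t]}$. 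Since the functional $F-I$ is only upper semicontinuous, its supremum over this closure can strictly exceed $\sup_{g\in D}K(g,t)$ --- in the example it is at least $K(f,t)$ while $\sup_D K(\cdot,t)$ is arbitrarily negative --- so applying the LDP upper bound to $\overline{D^\sharp}$, or to $\overline{\pi_t D}$ in $C[0,t]$, does not deliver the stated bound. The truncation $\{\|\phi\|_\infty\le M\}$ does not repair this, because sup-norm balls in $C[0,1]$ are not compact, so there is no finite covering of a general closed $D$ by small balls (the only case your parenthetical remark addresses). This is precisely where the paper invests its real work: Lemma \ref{extreme_lem} together with Arzel\`a--Ascoli (Lemma \ref{totally_bdd_lem}) gives compactness of $D\setminus F_N$ and hence a finite cover by $\varepsilon$-balls, while Lemma \ref{semicont_lem} (compactness of the level sets of the rate functional) shows that the $\varepsilon$-fattening raises $\sup K(\cdot,t)$ by at most $\delta$; an argument of that type, which would also cure the closure problem directly, must be added before your upper bound is complete.
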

Moreover, if we define
\begin{equation}
\nexp(z):=\sup \big\{ K(f,1) : f\in C[0,1], \hs f(1)=z\big\},
\label{nexp}
\end{equation}
we have the following easy corollary:
\begin{cor}\label{cor_unconstrained}
For each $\varepsilon >0$ and $z\in \R$, let $D_{z,\epsilon} := \{ f \in
C[0,1]: |f(1)-z| \le \epsilon\}.$ Then
$$
\lim_{\epsilon\rightarrow 0} \lim_{T \rightarrow\infty}  \frac1{T^{\frac{2+p}{2-p}}} \log \E |N_T(D_{z,\epsilon}, 1)| = \nexp(z).
$$
\end{cor}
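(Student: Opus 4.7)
The strategy is to bracket $\E|N_T(D_{z,\epsilon},1)|$ between expectations over a closed and an open set of paths so that Theorem~\ref{expected_growth_paths_thm} applies directly. Set $A_{z,\epsilon}:=\{f\in C[0,1]:|f(1)-z|<\epsilon\}$, which is open and satisfies $A_{z,\epsilon}\subseteq D_{z,\epsilon}$, hence $N_T(A_{z,\epsilon},1)\subseteq N_T(D_{z,\epsilon},1)$. Applying Theorem~\ref{expected_growth_paths_thm} to each and using that $\sup_{f\in D_{z,\epsilon}}K(f,1)=\sup_{|y-z|\le\epsilon}\nexp(y)$ (and analogously for $A_{z,\epsilon}$) yields
\begin{equation*}
\sup_{|y-z|<\epsilon}\nexp(y)\;\le\;\liminf_{T\to\infty}\frac{\log\E|N_T(D_{z,\epsilon},1)|}{T^{(2+p)/(2-p)}}\;\le\;\limsup_{T\to\infty}\frac{\log\E|N_T(D_{z,\epsilon},1)|}{T^{(2+p)/(2-p)}}\;\le\;\sup_{|y-z|\le\epsilon}\nexp(y).
\end{equation*}
The open-set supremum is at least $\nexp(z)$ for every $\epsilon>0$, so the corollary reduces to proving upper semi-continuity of $\nexp$ at $z$, i.e.\ $\limsup_{\epsilon\downarrow0}\sup_{|y-z|\le\epsilon}\nexp(y)\le\nexp(z)$.

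To establish this, I would take any sequence $y_n\to z$ and, for each $n$, pick a near-optimiser $f_n\in H_1$ with $f_n(1)=y_n$ and $K(f_n,1)\ge\nexp(y_n)-1/n$. Testing with the linear path $s\mapsto y_n s$ shows $\nexp(y_n)\ge m\beta|y_n|^p/(p+1)-y_n^2/2$, which is bounded below uniformly in $n$. Since $f_n(0)=0$, Cauchy--Schwarz gives $\|f_n\|_\infty\le\|f_n'\|_{L^2}$, so
\begin{equation*}
\nexp(y_n)-1/n\;\le\;K(f_n,1)\;\le\;m\beta\,\|f_n'\|_{L^2}^p-\tfrac12\|f_n'\|_{L^2}^2.
\end{equation*}
As $p<2$, this forces $\sup_n\|f_n'\|_{L^2}<\infty$. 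I would then introduce the linearly corrected path $\tilde f_n(s):=f_n(s)+(z-y_n)s$, which lies in $H_1$ with $\tilde f_n(1)=z$, and verify by direct computation that $K(\tilde f_n,1)-K(f_n,1)\to 0$: the kinetic difference equals $-(z-y_n)y_n-\tfrac12(z-y_n)^2$ (using $\int_0^1 f_n'=y_n$), while the potential difference is controlled by $\|f_n\|_\infty$ together with the Lipschitz estimate on $|\cdot|^p$ when $p\ge1$ or the H\"older estimate $||a|^p-|b|^p|\le|a-b|^p$ when $p<1$. Consequently $\nexp(z)\ge K(\tilde f_n,1)=K(f_n,1)+o(1)\ge\nexp(y_n)-1/n+o(1)$, which delivers $\limsup_n\nexp(y_n)\le\nexp(z)$.

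The only non-routine step is the uniform $H_1$-bound on the near-optimisers; without it, the perturbation $\tilde f_n$ could fail to track $f_n$ in $K$-value. This is also the only place where the assumption $p<2$ directly enters, which is reassuring, since it is precisely the condition that prevents the BBM itself from exploding in finite time.
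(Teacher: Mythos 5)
Your bracketing between the open set $A_{z,\epsilon}$ and the closed set $D_{z,\epsilon}$ is exactly the skeleton of the paper's argument, but the two proofs distribute the work differently. The paper's (very terse) proof spends its only effort on showing that for \emph{fixed} $\epsilon$ the upper and lower bounds from Theorem~\ref{expected_growth_paths_thm} coincide, i.e.\ $\sup_{f\in D_{z,\epsilon}}K(f,1)=\sup_{f\in A_{z,\epsilon}}K(f,1)$, by perturbing any path ending exactly at an endpoint of the interval so that it ends just inside; this is what makes the inner limit $\lim_{T\to\infty}$ exist, and the subsequent $\epsilon\to0$ limit is left implicit. You do the opposite: you give a genuinely careful treatment of the $\epsilon\to 0$ step (upper semicontinuity of $\nexp$ at $z$, via the uniform $L^2$ bound on near-optimisers' derivatives, which is correct and uses $p<2$ in the right place, followed by the linear correction $\tilde f_n(s)=f_n(s)+(z-y_n)s$), but you never address whether $\liminf_T$ and $\limsup_T$ agree for fixed $\epsilon$. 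As written, your chain only yields $\lim_{\epsilon\to0}\liminf_T=\lim_{\epsilon\to0}\limsup_T=\nexp(z)$, whereas the statement's iterated limit presupposes that $\lim_{T\to\infty}$ exists for each small $\epsilon$.

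Fortunately this gap closes with the tool you already have: applying your linear-shift computation to a near-optimiser $f$ with $f(1)=z\pm\epsilon$ (shifting by $\mp\delta s$ with $\delta$ small, which is legitimate because $K(f,1)>-\infty$ forces $\|f'\|_{L^2}<\infty$) shows $\sup_{|y-z|<\epsilon}\nexp(y)\ge\sup_{|y-z|\le\epsilon}\nexp(y)$, so the open and closed suprema coincide and the inner limit exists, exactly as in the paper. With that one sentence added, your proof is complete; it is in fact more explicit than the paper's about the $\epsilon\to0$ limit (which the paper could alternatively justify by its Lemma~\ref{semicont_lem}, since $D_{z,\epsilon}\cap C_0[0,1]$ sits inside the sup-norm $\epsilon$-fattening of $\{f:f(1)=z\}$).
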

Therefore, $\nexp(z)$ controls the growth rate of the expectation of the number of particles which end up near $z$ on the rescaled space. The next theorem shows that the supremum defining $\nexp(z)$ corresponds to a unique optimal path $h_z$; optimising over $z$ then gives the total expected population growth.

\begin{thm}\label{unconstrained_thm}
For $z\in\R$, the optimisation problem
\[\nexp(z)= K(h_z,1)\]
has a solution $h_z\in C^2[0,1]$ which is unique for $z\ne0$ amongst all $H_1$ functions ending at $z$. For $z\ge0$, the solution $h_z$ is positive and satisfies for all $s\in[0,1]$
\[h_z''(s) + m\beta p h_z(s) ^{p-1} = 0, \hs\hs h_z(0)=0, \hs\hs h_z(1) = z.\]
Furthermore there exists a unique $\bestzexp \geq0$ such that
\[
\bestnexp:=\nexp(\bestzexp)=\sup_z \nexp(z)=\sup_{f\in C[0,1]} K(f,1).
\]
Then the expected total population size satisfies
\[\lim_{T\to\infty}\frac{1}{T^{\frac{2+p}{2-p}}}\log \E|N(T)| =
\bestnexp,
\]
where one finds
\[ h_{\bestzexp}'(1)=0,
\qquad\bestzexp
=\frac{(2m\beta)^{\frac1{2-p}}}{\left[\int_0^1\frac{\diffd x}{\sqrt{1-x^p}}\right]^{\frac2{2-p}}}
= (2m\beta)^{\frac1{2-p}}\left[\frac{\Gamma\big(\frac12+\frac1p\big)}{\sqrt{\pi}\,\Gamma\big(1+ \frac1p\big)}\right]^{\frac2{2-p}}
\]
and
\[\bestnexp=\frac{2-p}{2+p}m\beta\bestzexp^p.\]
\end{thm}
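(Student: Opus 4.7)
My plan is to handle the variational problem for $\nexp(z)$ by the direct method of the calculus of variations, exploit the Euler--Lagrange ODE together with its energy first integral to derive the explicit formulas, and finally combine Theorem~\ref{expected_growth_paths_thm} with Corollary~\ref{cor_unconstrained} to identify the growth rate of $\E|N(T)|$.

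For existence, I use that for any $f\in H_1$ one has $\|f\|_\infty \le \|f'\|_2$, so $K(f,1)\le m\beta\|f'\|_2^p - \tfrac12\|f'\|_2^2$, which is bounded above since $p<2$. A maximising sequence then has $\|f_n'\|_2$ bounded; along a subsequence $f_n$ converges weakly in $H_1$ and (by the compact embedding $H_1\hookrightarrow C[0,1]$) uniformly on $[0,1]$, and weak lower semicontinuity of $\|\cdot\|_2^2$ together with uniform continuity of $f\mapsto\int_0^1|f|^p$ shows that the limit attains the supremum. A reflection argument (replacing any zero-crossing excursion of $f$ by its absolute value leaves both $\int(f')^2$ and $\int|f|^p$ unchanged) lets me assume $h_z\ge 0$ when $z\ge 0$; the standard first variation then gives $h_z''+m\beta p\, h_z^{p-1}=0$ wherever $h_z>0$, and ODE bootstrap upgrades $h_z$ to $C^2$.

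This ODE admits the first integral $\tfrac12(h_z')^2 + m\beta h_z^p = C$. For $z>0$ I would argue that $h_z$ is strictly positive and monotone on $(0,1]$: if $h_z$ had an interior zero or attained its maximum before $s=1$, then suitable rescalings (e.g.\ $s\mapsto h_z(a+(1-a)s)$) and truncations of sub-arcs would produce an admissible path with strictly larger $K$, contradicting maximality. Along such a monotone solution $h_z'=\sqrt{2(C-m\beta h_z^p)}$, and separation of variables yields the quadrature $1 = \int_0^z df/\sqrt{2(C-m\beta f^p)}$, which determines $C$ (and hence $h_z$) uniquely as a function of $z$. For the outer maximisation, $\nexp(z)$ is continuous and $\nexp(z)\to-\infty$ as $|z|\to\infty$ (the kinetic cost eventually dominates), so a maximiser $\bestzexp\ge 0$ exists; treating $z$ as a free endpoint gives the transversality condition $h_{\bestzexp}'(1)=0$, i.e.\ $C=m\beta\bestzexp^p$. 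Substituting $f=\bestzexp x$ in the quadrature gives $\bestzexp^{(2-p)/2}\int_0^1 dx/\sqrt{1-x^p}=\sqrt{2m\beta}$, which is the stated formula for $\bestzexp$; the Beta-integral evaluation of $\int_0^1 dx/\sqrt{1-x^p}$ produces the Gamma-function form. The conservation law rewrites $K(h_{\bestzexp},1) = 2m\beta\int_0^1 h_{\bestzexp}^p\,ds - m\beta\bestzexp^p$, and computing the remaining integral by the same substitution, together with the identity $\int_0^1 x^p/\sqrt{1-x^p}\,dx = \frac{2}{2+p}\int_0^1 dx/\sqrt{1-x^p}$, yields $\bestnexp = \frac{2-p}{2+p}\,m\beta\,\bestzexp^p$.

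Finally, the upper bound on $\log\E|N(T)|$ is Theorem~\ref{expected_growth_paths_thm} applied to the closed set $D=C[0,1]$, since $N_T(C[0,1],1)=N(T)$ and $\sup_{f\in C[0,1]}K(f,1)=\bestnexp$, and the matching lower bound is Corollary~\ref{cor_unconstrained} at $z=\bestzexp$ via $\E|N(T)|\ge\E|N_T(D_{\bestzexp,\eps},1)|$ for every $\eps>0$. I expect the main obstacle to be the uniqueness assertion for $z\ne 0$: since $K$ is not globally concave on $H_1$ when $p\in(1,2)$, distinct solutions of the Euler--Lagrange BVP (monotone vs.\ overshoot-and-return) must be eliminated as candidate maximisers by direct $K$-comparisons, and the singularity of $f^{p-1}$ at $f=0$ when $p<1$ requires careful analysis of the ODE behaviour near the origin.
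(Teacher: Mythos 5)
Your overall architecture (direct method for existence, first variation plus energy integral, Beta-function quadratures, and Theorem~\ref{expected_growth_paths_thm}/Corollary~\ref{cor_unconstrained} for the identification of $\lim T^{-\frac{2+p}{2-p}}\log\E|N(T)|$) is legitimate and genuinely different from the paper, which instead recycles the machinery built for the constrained problem: the perturbation argument of Lemma~\ref{lem:optdiffeqn} for the Euler--Lagrange equation, the piecewise-$C^2$ approximation of Lemma~\ref{lem:c2opt} to pass from $C^2_{\text{piecewise}}$ to $H_1$, the flattening argument of Lemma~\ref{lem:optgrowth} for $h_{\bestzexp}'(1)=0$, and Theorem~\ref{constrained_id} for the value $\bestnexp=\frac{2-p}{2+p}m\beta\bestzexp^p$. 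Your direct computation of $\bestnexp$ via $\tfrac12(h')^2+m\beta h^p=m\beta\bestzexp^p$ and the identity $\int_0^1 x^p(1-x^p)^{-1/2}\,\diffd x=\frac{2}{2+p}\int_0^1(1-x^p)^{-1/2}\,\diffd x$ is correct and arguably more self-contained than the paper's appeal to the ODE for $\nexp(z)$.

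However, there is a genuine error at the heart of your characterisation and uniqueness argument: the claim that for every $z>0$ the maximiser $h_z$ is monotone on $(0,1]$, with the one-sided quadrature $1=\int_0^z \diffd f/\sqrt{2(C-m\beta f^p)}$, is false whenever $0<z<\bestzexp$. The true optimal path overshoots $z$ and returns: e.g.\ for $p=1$ the paper's explicit solution is $h_z(s)=-\tfrac12 m\beta s^2+(z+\tfrac12 m\beta)s$, whose maximum is interior for all $z<\tfrac12 m\beta=\bestzexp$ (equivalently $h_z'(1)<0$ there, consistent with $h_{\bestzexp}'(1)=0$ and the monotonicity in $z$ of Lemma~\ref{z_mono}). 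Consequently your proposed contradiction ("truncation/rescaling of a path with an interior maximum yields strictly larger $K$") cannot be made to work -- flattening the path after its maximum lowers the breeding term $m\beta\int|f|^p$ by more than it saves in kinetic cost precisely in this regime -- and the uniqueness of $h_z$ for general $z\neq 0$ is not established by your quadrature, since the correct quadrature involves the unknown interior maximum $M>z$ (two branches, up to $M$ and back down to $z$). The explicit formulas you derive survive because at the optimal endpoint $z=\bestzexp$ one does have $h'(1)=0$, and then $h''=-m\beta p\,h^{p-1}\le 0$ forces $h'\ge 0$ on $[0,1]$, so monotonicity holds exactly where you need it; but the statement of the theorem also asserts uniqueness of $h_z$ for all $z\neq0$, and for that you need either the paper's route (reduction to $C^2_{\text{piecewise}}$ and the gluing/uniqueness argument) or a two-branch quadrature analysis in which $M$ is shown to be uniquely determined by $z$. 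The singularity of $h^{p-1}$ at zeros for $p<1$, which you flag, is a secondary issue by comparison, since positivity of $h_z$ on $(0,1]$ for $z\ge0$ can be handled as in the paper.
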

\begin{rmk}
For $z<0$, one has $h_z(s)=-h_{-z}(s)$. For $z=0$ and $p>0$, there are two symmetrical optimal paths, one positive and one negative. For $z=0$ and $p=0$ the optimal path is unique and equal to $h_0=0$.
\end{rmk}

\subsection{Almost sure growth along paths}\label{as_growth_section}


Let us now focus on the problem of giving an almost sure result for the actual number of particles that have a rescaled path lying in some set $D$.

We let
\[\theta_0(f):= \inf\left\{t \in[0,1] : K(f,t) < 0  \right\}\in [0,1)\cup\{\infty\} .\]
We think of $\theta_0$ as the extinction time along $f$, the time at which the number of particles following $f$ hits zero:
if $t > \theta_0(f)$, basically at large times no particle has a path that looks like $f$ up to time $t$. On the other hand, if $t \le \theta_0(f)$, the number of particles with a rescaled path looking like $f$ up to time~$t$ grows like the expected number of particles following that path.
This is made precise in Theorem \ref{growth_paths_thm} below.

\begin{thm}\label{growth_paths_thm}
For any closed set $D\subset C[0,1]$ and $t \in[0,1]$,
\[\limsup_{T\to\infty} \frac{1}{T^{\frac{2+p}{2-p}}}\log|N_T(D,t)|   \leq \sup \{  K(f,t) : f\in D, \theta_0(f) \ge t\}  \hs\hbox{almost surely,}\]
and for any open set $A\subset C[0,1]$ and $t\in[0,1]$,
\[\liminf_{T\to\infty} \frac{1}{T^{\frac{2+p}{2-p}}}\log|N_T(A,t)|\geq \sup
\{  K(f,t) : f\in A, \theta_0(f) \ge t\}\hs\hbox{almost surely}.\]
\end{thm}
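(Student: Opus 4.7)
The plan is to deduce the almost sure statement from the expectation estimate of Theorem \ref{expected_growth_paths_thm} by combining Markov and Borel--Cantelli arguments for the upper bound with an iterated second-moment (spine-based) argument for the lower bound. The key observation is that the constraint $\theta_0(f)\ge t$ enters only in the lower bound: for the upper bound it appears essentially for free, because any path whose running $K$ functional becomes strictly negative loses all of its descendants almost surely, so only ``good'' paths contribute to $|N_T(D,t)|$.

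For the upper bound, I would split $N_T(D,t)=N_T(D^g,t)\cup N_T(D^b,t)$, where $D^g=\{f\in D:\theta_0(f)\ge t\}$ and $D^b=D\setminus D^g$. Applying Markov's inequality to Theorem \ref{expected_growth_paths_thm} along a geometric subsequence $T_n=(1+\delta)^n$ immediately gives, by Borel--Cantelli, that $|N_{T_n}(D^g,t)|\le\exp(T_n^{(2+p)/(2-p)}(\sup_{f\in D^g}K(f,t)+\varepsilon))$ eventually, and monotonicity in $T$ interpolates between the $T_n$. For $D^b$, I would cover it by countably many small balls $B(f_i,\varepsilon_i)$ where the bottleneck of $f_i$ occurs at some rational $s_i<t$ with $K(f_i,s_i)<-\eta_i<0$; Theorem \ref{expected_growth_paths_thm} applied at time $s_i$ together with Markov then forces $N_T(B(f_i,\varepsilon_i),s_i)=\emptyset$ almost surely for all large $T$, hence $N_T(B(f_i,\varepsilon_i),t)=\emptyset$ as well. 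Compactness of the relevant family of trajectories -- which follows from the fact that any competitive path has bounded $H_1$-energy and hence lies in a compact subset of $C[0,1]$ -- makes the covering effectively finite.

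The main obstacle is the lower bound, which requires showing that expected and typical behaviour agree along good paths. I would fix $f\in A$ with $\theta_0(f)\ge t$ and $K(f,t)>-\infty$, approximate $f$ inside $A$ by a smooth path $\tilde f$, and partition $[0,t]$ into $K$ intervals $0=t_0<t_1<\cdots<t_K=t$ fine enough that the incremental contributions $K(\tilde f,t_{k+1})-K(\tilde f,t_k)$ are both positive (using $K(\tilde f,\cdot)\ge0$) and small in absolute value. At each step, given the particles that are in a narrow tube around $\tilde f$ at time $t_kT$, I would use the spine change of measure from section \ref{spine_section} to compute a first and second moment for the number of descendants at time $t_{k+1}T$ still in the tube, and apply the Paley--Zygmund inequality to show that with probability bounded away from $0$ a single surviving ancestor produces roughly $\exp(T^{(2+p)/(2-p)}(K(\tilde f,t_{k+1})-K(\tilde f,t_k)))$ such descendants. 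Iterating and using the branching Markov property, the independence of the subtrees promotes this to a statement with probability tending to $1$; summing the increments recovers the total $K(\tilde f,t)$, and passing to the limit $\tilde f\to f$ yields the stated almost sure lower bound.

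The most delicate step will be the second-moment bound, because the branching rate $\beta|x|^p$ is unbounded: a naive estimate is spoiled by pairs of particles that coalesce late and both follow $\tilde f$, and one must verify that such pairs contribute at most $(\E|N_T(\cdot)|)^2$ up to subexponential corrections. This is precisely where the hypothesis $K(\tilde f,s)\ge0$ on all of $[0,t]$ is indispensable: it prevents exponential blow-up of the conditional variance at an intermediate bottleneck, which would be the signature of the ``difficult'' regime. Once the second-moment estimate is in place the Paley--Zygmund step and the iteration over $k$ are routine, and extending from a countable dense family of piecewise-smooth paths to arbitrary open sets $A$ is a standard covering argument.
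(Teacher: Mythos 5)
Your overall architecture runs parallel to the paper's (expectation estimates plus Markov and Borel--Cantelli for the upper bound; a moment argument exploiting $K(f,s)\ge 0$ and branching independence for the lower bound), but two steps that you treat as routine are in fact where the real work lies, and as written they fail. First, there is no monotonicity of $|N_T(D,t)|$ in $T$: as $T$ varies, both the time horizon $tT$ and the spatial scaling $T^{2/(2-p)}$ change, so a particle counted at $T_n$ bears no fixed relation to the count at $T\in(T_n,T_{n+1})$. Consequently ``monotonicity in $T$ interpolates between the $T_n$'' is not available, and the same issue is silently present in your lower bound, which at best is established along a subsequence. The paper has to work for this interpolation: it enlarges the tube from $\varepsilon$ to $2\varepsilon$, shows that for $T$ between consecutive lattice times every particle of $N_T(D,t)$ has an ancestor in $N_{T_j}(D^\varepsilon,t)$, and controls the oscillation of the Brownian paths and the expected number of offspring created on $[tT_j,tT_{j+1}]$ (this is the content of Proposition \ref{tube_reduction} and Lemma \ref{upper_lattice_to_cts}); moreover it uses lattice times with bounded gaps, whereas your geometric subsequence makes the gaps grow linearly and the interpolation strictly harder. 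Your treatment of the ``bad'' part $D^b$ is in the right spirit (it is essentially Corollary \ref{upper_bd}, emptiness forced by a negative rate at an earlier time plus integer-valuedness), but it too needs the continuous-time interpolation and the compactness machinery of Lemmas \ref{extreme_lem}--\ref{semicont_lem} rather than an appeal to ``bounded $H_1$-energy'' of competitive paths, since a priori particles may follow paths of arbitrarily large energy and one must rule this out almost surely and uniformly.

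Second, the lower bound as sketched does not deliver an almost sure statement. Paley--Zygmund applied to the single initial particle gives a success probability bounded below by $(\E N)^2/\E[N^2]$, which, even granting your second-moment bound, is only a constant (and, because the two-particle term carries an integral over the splitting time, more realistically degrades polynomially in $T$); it does not tend to $1$ and its complement is not summable in $T$, so Borel--Cantelli cannot start. ``Independence of the subtrees promotes this to a statement with probability tending to 1'' only helps from the second step onwards, when exponentially many particles are already present; the missing ingredient is a seeding step showing that by a small time $\eta T$ there are already of order $\nu T$ particles, all still in the tube because the tube has width $\varepsilon T^{2/(2-p)}$ near the origin (Lemma \ref{rightmost_lem} in the paper). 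With that, a single ``probability $\le\gamma<1$ of failure'' estimate per subtree (the paper's Lemma \ref{bound_prob_below_1_lem}) yields failure probability $\gamma^{\nu T}$, which is summable, and one then still needs the lattice-to-continuous step above. Note also that the paper deliberately avoids your delicate second-moment computation altogether: it proves uniform integrability of the additive spine martingales $Z_T$ via the spine decomposition and the size-biased offspring law (this is where $\E[A\log A]<\infty$ enters), which substitutes for Paley--Zygmund; if you insist on the second-moment route you must actually verify the many-to-two estimate with the unbounded rate $\beta|x|^p$ truncated inside the tube, which you have flagged but not carried out.
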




Moreover, if one defines
\begin{equation}\label{Kasconstraint}
\nas(z):=\sup \big\{ K(f,1), f\in C[0,1],f(1)=z, \theta_0(f)=\infty\big\},
\end{equation}
we obtain the following corollary:
\begin{cor}\label{C:corol nas}
For $z\in  \R$,
$$
\lim_{\epsilon\rightarrow 0} \lim_{T \rightarrow\infty}  \frac1{T^{\frac{2+p}{2-p}}} \log |N_T(D_{z,\epsilon}, 1)| = \nas(z)\qquad\text{almost surely}.
$$
\end{cor}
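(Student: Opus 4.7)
The plan is to derive this corollary from Theorem \ref{growth_paths_thm} at $t=1$ by sending $\epsilon\to 0$, in perfect analogy with the derivation of Corollary \ref{cor_unconstrained} from Theorem \ref{expected_growth_paths_thm}. First I would observe that since $\theta_0(f)\in [0,1)\cup\{\infty\}$, the constraint $\theta_0(f)\ge 1$ appearing in Theorem \ref{growth_paths_thm} is equivalent at $t=1$ to $\theta_0(f)=\infty$, which is exactly the constraint used in the definition \eqref{Kasconstraint} of $\nas(z)$. Setting
\[
\nas^\epsilon(z) := \sup\{K(f,1) : |f(1)-z|\le \epsilon,\ \theta_0(f)=\infty\},
\]
and applying Theorem \ref{growth_paths_thm} to the closed ball $D_{z,\epsilon}$ gives almost surely
\[
\limsup_{T\to\infty}\frac{1}{T^{(2+p)/(2-p)}}\log |N_T(D_{z,\epsilon},1)|\le \nas^\epsilon(z),
\]
while applying it to the open ball $A_{z,\epsilon} := \{f:|f(1)-z|<\epsilon\}\subset D_{z,\epsilon}$ yields
\[
\liminf_{T\to\infty}\frac{1}{T^{(2+p)/(2-p)}}\log |N_T(D_{z,\epsilon},1)|\ge \nas(z),
\]
since the constant-endpoint slice $\{f : f(1)=z,\ \theta_0(f)=\infty\}$ is contained in $A_{z,\epsilon}$.

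To reconcile the almost-sure qualifier (which a priori depends on $\epsilon$) across $\epsilon$, I would apply these two inequalities along a countable sequence $\epsilon_n\downarrow 0$, obtaining a single full-measure event on which both hold for every $n$. Monotonicity of $\epsilon\mapsto |N_T(D_{z,\epsilon},1)|$ then lets one sandwich the limsup and liminf for arbitrary $\epsilon$ between their values at neighbouring $\epsilon_n$. The corollary therefore reduces to the deterministic claim $\lim_{\epsilon\downarrow 0}\nas^\epsilon(z)=\nas(z)$.

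Since $\nas^\epsilon(z)\ge \nas(z)$ is immediate by enlarging the feasible set, the main obstacle is the reverse inequality $\limsup_{\epsilon\downarrow 0}\nas^\epsilon(z)\le \nas(z)$. My approach would be perturbative: take near-optimisers $f_n$ for $\nas^{\epsilon_n}(z)$ with $|f_n(1)-z|\le \epsilon_n\to 0$ and $\theta_0(f_n)=\infty$, and modify them on a short terminal window $[1-\delta_n,1]$ via $\tilde f_n(s):=f_n(s)+(z-f_n(1))\phi_n(s)$, where $\phi_n\in C^1[0,1]$ is a smooth ramp supported in $[1-\delta_n,1]$ with $\phi_n(1)=1$. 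A direct computation using Cauchy--Schwarz and the a priori $H_1$-boundedness of near-optimisers (available because the kinetic term in $K$ controls $\|f_n'\|_{L^2}$ once the potential term is bounded) yields $|K(\tilde f_n,1)-K(f_n,1)|=O(\epsilon_n/\sqrt{\delta_n})$, which is $o(1)$ for a slow choice such as $\delta_n=\sqrt{\epsilon_n}$.

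The subtle point is verifying that $\tilde f_n$ remains admissible, i.e.\ that $K(\tilde f_n,s)\ge 0$ on $[1-\delta_n,1]$. This is the real technical difficulty, and I would handle it by first establishing that, for $z$ in the open interior $(-r(1),r(1))$ of the admissible endpoint range (where $r$ is the rightmost trajectory described in the heuristics), near-optimal paths enter a ``growth phase'' in which $K(f,s)$ is bounded below by some $c(z)>0$ on $[1-\delta,1]$ for $\delta$ small enough; this absorbs the perturbation. The boundary points $z=\pm r(1)$ and the cases $|z|>r(1)$ are then immediate: at the boundary $\nas(z)=0$ is attained by $\pm r$, and beyond it both $\nas^\epsilon(z)$ and $\nas(z)$ are eventually $-\infty$ by the convention $\sup\emptyset=-\infty$, so that the two sides coincide once $\epsilon$ is small enough. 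Once this deterministic continuity statement is in hand, the corollary follows.
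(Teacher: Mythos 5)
Your overall route is the one the paper itself takes: squeeze $|N_T(D_{z,\epsilon},1)|$ between the bounds of Theorem \ref{growth_paths_thm} applied to the closed and open balls (using, correctly, that $\theta_0(f)\ge 1$ is the same as $\theta_0(f)=\infty$), and then reduce the corollary to the deterministic statement that the endpoint-constrained suprema $\nas^\epsilon(z)$ converge to $\nas(z)$; the paper's own, very terse, proof rests on exactly the same ``modify the path near time $1$'' idea. Your treatment of the interior case $|z|<\bar z$ is sound: admissibility forces a uniform sup-norm bound, hence a uniform bound on the potential term and on the Dirichlet energy, so on a terminal window $[1-\delta,1]$ one has $K(f,s)\ge K(f,1)-C\delta$, and since near-optimisers satisfy $K(f_n,1)\ge \nas(z)-o(1)>0$, your ``growth phase'' bound and the $O(\epsilon_n/\sqrt{\delta_n})$ perturbation estimate do go through.

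The genuine gap is in the boundary and exterior cases, which are not ``immediate''. At $z=\pm\bar z$ what is needed is the upper bound $\limsup_{\epsilon\to0}\nas^\epsilon(\pm\bar z)\le 0$, i.e.\ that admissible paths ending within $\epsilon$ of $\pm\bar z$ cannot sustain a growth rate bounded away from zero; the observation that $\nas(\pm\bar z)=0$ is attained by $\pm r$ only gives the lower bound, and your perturbation mechanism degenerates there precisely because the constant $c(z)$ vanishes together with the optimal value. Likewise, for $|z|>\bar z$ the claim that $\nas^\epsilon(z)=-\infty$ for small $\epsilon$ is the statement that no $H_1$ path with $\theta_0(\cdot)=\infty$ ends beyond $\bar z$; this is not a consequence of the convention $\sup\emptyset=-\infty$ but a theorem, proved in the paper only in Section \ref{sec:optpaths} (for piecewise $C^2$ paths, by contradiction with Theorem \ref{rightmost_thm}, then extended by approximation), and a naive Gronwall-type bound extracted from the constraint gives a sup bound strictly larger than $\bar z$, so it cannot be waved through. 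Both gaps close at once if you replace the case analysis by a compactness/upper-semicontinuity argument in the spirit of Lemma \ref{semicont_lem}: near-optimisers for $\nas^{\epsilon_n}(z)$ with $K(f_n,1)$ bounded below have uniformly bounded energy, hence lie in a compact set; any uniform limit ends exactly at $z$, is still admissible because $f\mapsto K(f,s)$ is upper semicontinuous for each fixed $s$, and satisfies $K(f,1)\ge\limsup_n K(f_n,1)$, which yields $\limsup_{\epsilon\to 0}\nas^\epsilon(z)\le\nas(z)$ for every $z$ simultaneously (alternatively, invoke the continuity of the explicit $\nas$ from Theorem \ref{constrained_thm}, whose proof does not use this corollary). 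One last small point: your argument only sandwiches the iterated limsup and liminf; to obtain the inner limit in $T$ for fixed $\epsilon$, as literally stated, you also need the open-ball and closed-ball constrained suprema to agree, which is exactly the check the paper performs, again by the endpoint-modification idea.
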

Therefore, $\nas(z)$ controls the growth rate of the almost sure number of particles which end up near $z$ on the rescaled space. The next theorem shows that the supremum defining $\nas(z)$ corresponds to a unique optimal path $g_z$ (that, therefore, most particles ending up near $z$ must have followed); optimising over $z$ then yields the almost sure total population size growth.
Let
\begin{equation}
r(s) := \left(\frac{m\beta s^2}{2}(2-p)^2\right)^{\frac{1}{2-p}}\quad\text{and}\quad \bar z :=r(1)=\left(\frac{m\beta}{2}(2-p)^2\right)^{\frac{1}{2-p}}.
\label{r(s)}
\end{equation}
Observe that by Theorem~\ref{rightmost_thm}, for all $s\in[0,1]$,
$R_{sT}/T^{2/(2-p)} \to r(s)$ almost surely as $T\to\infty$. This means that $r(s)$ describes the boundary of the limiting shape of the trace of the rescaled BBM and $\bar z$ is the rescaled position of the right-most particle at time 1.

\begin{thm}\label{constrained_thm}
For each $z \in [-\bar z, \bar z]$, one has
\begin{equation}\label{Kaskilling}
\nas(z)=\sup\{K(f,1) : f\in C[0,1], f(1)=z, |f(s)|\le r(s)\ \forall s\in [0,1]\}.
\end{equation}
Moreover, the optimisation problem
\[\nas(z)=K(g_z,1)\]
has a solution $g_z$ which is unique for $z\ne0$ amongst all $H_1$ functions ending at $z$ such that $\theta_0(\cdot)=\infty$.
For $|z|>\bar z$ one has $\nas(z)=-\infty$, which means that no function of $H_1$ with $\theta_0(\cdot)=\infty$ reaches $z$.

The solution $g_z$ for $0\le z\le\bar z$ is characterised as follows: there exists a unique $s_z \in[0,1]$ such that
\begin{enumerate}
\item for all $s\in[0,s_z]$, $g_z(s)=r(s)$
\item for all $s\in(s_z,1]$, $g_z$ is twice continuously differentiable and
\begin{equation}\label{E:diff eq for g_z}
g_z''(s) + m\beta p g_z(s)^{p-1} = 0, \hs\hs g_z(1)=z;
\end{equation}
\item $g_z$ is differentiable at $s_z$.
\end{enumerate}
Furthermore there exists a unique $\bestzas\ge0$  such that
\[\bestnas:=\nas(\bestzas)=\sup_z \nas(z)=\sup \big\{ K(f,1), f\in C[0,1], \theta_0(f)=\infty\big\}.\]
Then the almost sure total population size satisfies
\[\lim_{T\to\infty}\frac{1}{T^{\frac{2+p}{2-p}}}\log |N(T)| = \bestnas\qquad\text{almost surely},
\]
where one finds
\[ g_{\bestzas}'(1)=0,
\qquad \bestzas = \left[\frac{\sqrt{2m\beta}}{\frac {2^{\frac{3p-2}{2p}}}{2-p}
	+\displaystyle\int_{2^{-1/p}}^1\frac{\diffd x}{\sqrt{1-x^p}}}\right]^\frac2{2-p}
\]
and
\[\bestnas=\frac{2-p}{2+p}m\beta\bestzas^p.\]

\end{thm}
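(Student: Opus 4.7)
The theorem bundles several results, and I would split the argument into three steps: (i) rewriting $\nas(z)$ as the obstacle-constrained supremum \eqref{Kaskilling}; (ii) identifying the two-phase optimiser $g_z$ and the switch time $s_z$; and (iii) optimising over $z$ to obtain $\bestzas$ and the almost-sure growth of $|N(T)|$.

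For (i), the central input is that $r$ from \eqref{r(s)} satisfies $\tfrac12(r')^2=m\beta r^p$ by direct computation, so $K(r,t)\equiv 0$. To show any $f\in H_1$ with $\theta_0(f)=\infty$ obeys $|f|\le r$ pointwise, suppose $|f(s_0)|>r(s_0)$ and use the scaling identity
\begin{equation*}
K(f,s_0)=s_0^{(2+p)/(2-p)}K(\tilde f,1),\qquad \tilde f(u):=f(us_0)/s_0^{2/(2-p)},
\end{equation*}
to reduce to $s_0=1$ with $|\tilde f(1)|>\bar z$. Bounding the right-hand side by $s_0^{(2+p)/(2-p)}\nexp(\tilde f(1))$ and invoking the conservation law $\tfrac12(h_y')^2+m\beta h_y^p\equiv E_y$ for the Euler-Lagrange optimiser of Theorem~\ref{unconstrained_thm}, one obtains $\nexp(y)<0$ whenever $|y|>\bar z$, contradicting $K(f,s_0)\ge 0$. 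The same argument yields $\nas(z)=-\infty$ for $|z|>\bar z$, and the reverse inequality in \eqref{Kaskilling} follows from step (ii), which exhibits an optimiser respecting both constraints.

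For (ii), I would treat \eqref{Kaskilling} as a one-sided obstacle problem. On the free set $\{g_z<r\}$ the Euler-Lagrange equation \eqref{E:diff eq for g_z} must hold; on the contact set $\{g_z=r\}$ the path is pinned to $r$. A smoothing argument (rounding any corner strictly increases $K$) forces $g_z\in C^1$, giving the pasting condition $g_z'(s_z)=r'(s_z)$. Combined with the conservation law $\tfrac12(g_z')^2+m\beta g_z^p\equiv E$ for the Euler-Lagrange flow and the identity $\tfrac12(r'(s_z))^2=m\beta r(s_z)^p$, this pins down $E=2m\beta r(s_z)^p$ and, given $z$, uniquely determines $s_z$ via separation of variables on $[s_z,1]$. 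Uniqueness of $g_z$ for $z\ne 0$ follows from this rigidity: any competitor either satisfies Euler-Lagrange off a contact set with the same pasting data (forcing coincidence with $g_z$), or contains a corner that is strictly improvable.

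For (iii), $z\mapsto\nas(z)$ is continuous on $[0,\bar z]$ with $\nas(\bar z)=K(r,1)=0$, and an envelope calculation gives $\nas'(z)=-g_z'(1)$ on the interior, so any interior maximiser $\bestzas$ satisfies $g_{\bestzas}'(1)=0$. Inserting this in the conservation law yields $E=m\beta\bestzas^p$, hence $r(s_{\bestzas})=\bestzas/2^{1/p}$; inverting \eqref{r(s)} locates the junction, and the change of variables $x=g_{\bestzas}/\bestzas$ in the separation-of-variables integral $\int_{s_{\bestzas}}^{1}\diffd s=\int_{\bestzas/2^{1/p}}^{\bestzas}\diffd g/\sqrt{2m\beta(\bestzas^p-g^p)}$ produces the stated closed form for $\bestzas$. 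The identity $\bestnas=\frac{2-p}{2+p}m\beta\bestzas^p$ then falls out of the decomposition $K(g_{\bestzas},1)=2m\beta\int_{s_{\bestzas}}^{1}g_{\bestzas}^p\,\diffd s - E(1-s_{\bestzas})$ together with a final use of the conservation law. The almost-sure asymptotic for $|N(T)|$ is a consequence of Corollary~\ref{C:corol nas}: Theorem~\ref{rightmost_thm} confines the rescaled support of $N(T)$ to $[-\bar z,\bar z]$, so a finite $\eps$-net covering gives $\limsup\log|N(T)|/T^{(2+p)/(2-p)}\le\sup_z\nas(z)=\bestnas$ almost surely, while $|N(T)|\ge|N_T(D_{\bestzas,\eps},1)|$ gives the matching lower bound. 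The technically delicate step is (i): extracting the pointwise bound $|f|\le r$ from the purely integral condition $\theta_0(f)=\infty$ is what enables the obstacle reformulation and ultimately drives the strict gap between the almost-sure and expected growth rates; the remaining steps are standard calculus of variations that the conservation law keeps explicit.
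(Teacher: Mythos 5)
Your step (i) is where the argument breaks, and it is the step everything else rests on. The claim that $\nexp(y)<0$ whenever $|y|>\bar z$ is false for $p>0$: the expected growth rate remains strictly positive on a whole interval beyond $\bar z$. For $p=1$ the paper computes $\nexp(z)=\tfrac1{24}m^2\beta^2+\tfrac12 m\beta z-\tfrac12 z^2$, which is positive for all $|z|<m\beta\big(\tfrac12+\tfrac1{\sqrt3}\big)$, whereas $\bar z=\tfrac12 m\beta$; this is precisely the ``gap'' between expectation and almost-sure behaviour that the theorem quantifies. Consequently, bounding $K(f,s_0)$ by the (rescaled) expectation rate cannot contradict $K(f,s_0)\ge0$ when $r(s_0)<|f(s_0)|$ but the rescaled endpoint lies below the larger zero of $\nexp$: your scaling argument only yields a pointwise envelope strictly larger than $r$, so neither \eqref{Kaskilling}, nor $\nas(z)=-\infty$ for $|z|>\bar z$, nor the identification of the contact set of the obstacle problem with $r$ is established. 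The paper's proof of the pointwise bound $f\le r$ is not a calculus-of-variations estimate at all: assuming $f(s_0)>r(s_0)$ while $K(f,\cdot)\ge0$, it modifies the path (follow $f$ up to $s_0$, hold it constant, then continue along the zero-cost trajectory solving $\tfrac12 g'^2=m\beta g^p$) to get $g$ with $K(g,1)=K(g,s_1)>0$ and $g(1)>r(1)$, and then invokes the almost-sure lower bound of Theorem~\ref{growth_paths_thm} to produce, with probability one, particles beyond the rescaled position $\bar z$ at time $1$, contradicting Theorem~\ref{rightmost_thm}. Some probabilistic input of this kind (or a genuinely sharper analytic argument than the expectation bound) is unavoidable, exactly because expectations do not feel the constraint.

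Steps (ii) and (iii) are broadly in line with the paper (corner-smoothing gives $C^1$ pasting, the Euler--Lagrange equation holds off the contact set, the envelope identity $K'(z)=-g_z'(1)$, separation of variables for $\bestzas$), but two points are glossed over. First, optimality and uniqueness are only argued among smooth competitors; the paper transfers them from piecewise-$C^2$ paths to all of $H_1$ by a piecewise-linear approximation (Lemma~\ref{lem:c2opt}) and a separate uniqueness argument, which your ``rigidity'' sketch does not cover. Second, the value $\bestnas=\frac{2-p}{2+p}m\beta\bestzas^p$ is obtained in the paper from the differential identity of Theorem~\ref{constrained_id} (via $K'(\bestzas)=0$), whereas your route requires an explicit evaluation of $\int_{s_{\bestzas}}^1 g_{\bestzas}^p\,\diffd s$ that is asserted but not carried out. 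For the almost-sure population growth it is also simpler, and what the paper effectively does, to apply Theorem~\ref{growth_paths_thm} with $D=A=C[0,1]$ rather than a net covering combined with Theorem~\ref{rightmost_thm}.
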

\begin{rmk}
It is easy to see that for $z\ge 0$ we need only consider positive functions since for a general $g, K(|g|,t) = K(g,t)$ for all $t\in[0,1]$, and it is not hard to see that  for all $t,z>0$, $g_z(t)>0$. For $p=0$, one has $s_z=0$ $\forall z \in[0,\bar z)$ and the almost-sure and expectation paths coincide.
When $p>0$, the proofs will make clear that $s_z>0$ $\forall z \in [0,\bar z]$. In particular this means  (still when $p>0$) that the majority of particles found near the origin have in fact followed either the left-most or right-most path for some proportion of their history and travelled a long way out before increasing in number whilst heading back away from the frontier.
\end{rmk}


An easy consequence of the theorem is that $r(s)$ describes not only the
limiting trace of the BBM, but also the actual rescaled trajectory of the
rightmost particle at time~$T$. It is the trajectory on the onset of
extinction, the one for which $K(r,t)=0$ for all $t$ or, equivalently, for
which
\begin{equation}
\frac{1}{2}r'(s)^2 = m\beta r(s)^p,\qquad\text{with $r(0)=0$},
\label{eqr(s)}
\end{equation}
as can be directly checked from~\eqref{r(s)}. It is interesting to observe that $r$ thus satisfies $r''(s) = m\beta p r(s)^{p-1}$. The solution $g_z$ thus satisfies  the same second-order differential equation on $[0,s_z)$ and $(s_z,1]$ up to a sign difference on the second term.

\subsection{Probability of presence}
\label{large_devs_section}

If $f$ is such that $\theta_0(f)<1$, Theorem~\ref{growth_paths_thm} suggests that as $T$ becomes large the number of particles whose rescaled paths have stayed close to $f$ is almost certainly 0. The following large deviations result shows how the probability of presence of a particle close to $f$ up to time $t$ decreases as $t$ goes from $0$ to $1$.
\begin{thm}\label{large_devs_thm}
For any closed set $D\subset C[0,1]$ and $t\in [0,1]$,
\[\limsup_{T\to\infty} \frac{1}{T^{\frac{2+p}{2-p}}}\log\P( N_T(D,t) \neq \emptyset ) \leq
\sup_{f\in D}\left[\inf_{s\le t} K(f,s)\right]\]
and for any open set $A\subset C[0,1]$ and $t \in [0,1]$,
\[\liminf_{T\to\infty} \frac{1}{T^{\frac{2+p}{2-p}}}\log\P( N_T(A,t) \neq \emptyset ) \geq
\sup_{f\in A}\left[ \inf_{s\le t} K(f,s)\right]\]
\end{thm}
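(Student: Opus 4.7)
The overall plan is to use the first-moment estimates from Theorem \ref{expected_growth_paths_thm} for the upper bound, and a Paley--Zygmund second-moment argument at the bottleneck combined with Theorem \ref{growth_paths_thm} away from it for the lower bound.

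For the upper bound, the key observation is monotonicity in $s$: if a particle $u\in N(tT)$ has its rescaled trajectory in $D$ on $[0,t]$, then the ancestor of $u$ at any earlier time $sT$ has its restricted rescaled trajectory in $D$ on $[0,s]$. Hence $\{N_T(D,t)\ne\emptyset\}\subseteq\{N_T(D,s)\ne\emptyset\}$ for all $s\le t$, and Markov's inequality yields
\[\P(N_T(D,t)\ne\emptyset)\le\inf_{s\in[0,t]}\E|N_T(D,s)|.\]
For a small closed ball $D=\overline B(f,\delta)$ Theorem \ref{expected_growth_paths_thm} controls the right-hand side; letting $\delta\downarrow 0$ and invoking upper semicontinuity of $K(\cdot,s)$ on $C[0,1]$ (the kinetic term being lower semicontinuous in the sup-norm by Fatou) produces the sharp constant $\inf_{s\le t}K(f,s)$. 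For a general closed $D$ one uses exponential tightness: the level sets $\{f\in H_1:\int_0^1 f'^2\le L\}$ are compact in $C[0,1]$ by Arzel\`a--Ascoli and can be covered by finitely many small balls, while on their complement the bound $K(f,t)\le m\beta\int_0^1|f|^p-L/2$ is arbitrarily negative for $L$ large, so the corresponding contribution to $\E|N_T(D,s)|$ is negligible on the exponential scale.

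For the lower bound, since $A$ is open it suffices to prove the claim for $A=B(f,\delta)$ with $f\in H_1$ and $\delta>0$ small, the original sup being approximated by a near-maximiser. If $\inf_{s\le t}K(f,s)>0$ then $\theta_0(f)=\infty$ and Theorem \ref{growth_paths_thm} forces $|N_T(B(f,\delta),t)|\to\infty$ almost surely, so the probability tends to $1$. Otherwise let $\theta\in(0,t]$ achieve the infimum with value $k\le 0$ and set $Z_T:=|N_T(B(f,\delta),\theta)|$. Theorem \ref{expected_growth_paths_thm} gives $\log\E Z_T\sim T^{(2+p)/(2-p)}k$, and Paley--Zygmund yields $\P(Z_T\ge 1)\ge(\E Z_T)^2/\E[Z_T^2]$. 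Conditionally on $\{Z_T\ge 1\}$ a particle sits near the rescaled position $f(\theta)$, and applying Theorem \ref{growth_paths_thm} to the shifted BBM on $[\theta,t]$, where $s\mapsto K(f,s)-k$ is nonnegative, produces descendants following $f$ up to time $tT$ with probability bounded below.

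The principal technical obstacle is the second-moment bound $\E[Z_T^2]\le(\E Z_T)^2\,e^{o(T^{(2+p)/(2-p)})}$. Via a many-to-two formula one expresses $\E[Z_T^2]$ as $\E Z_T$ plus a spine integral of the form
\[C\int_0^{\theta T}\E\Big[\beta|X_\tau|^p\,\ind_{\{X\text{ close to rescaled }f\text{ up to }\tau\}}\,v_{T,\tau}(X_\tau)^2\Big]\diffd\tau,\]
where $X$ is a Brownian motion and $v_{T,\tau}(x)$ is the expected number of descendants of a particle at $x$ at time $\tau$ that continue to follow $f$ on $[\tau/T,\theta]$. Applying Theorem \ref{expected_growth_paths_thm} to $v_{T,\tau}$ contributes an exponent $2(k-K(f,\tau/T))$; combined with the spine contribution $K(f,\tau/T)$, the integrand is of order $\exp(T^{(2+p)/(2-p)}(2k-K(f,\tau/T)))$. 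Since $K(f,\cdot)$ is minimised at $\theta$, this is maximised at $\tau=\theta T$ with value $2k$, matching $(\E Z_T)^2$. The heart of the proof is a Laplace-type estimate showing the integral concentrates at this maximiser on the exponential scale with only a subexponential prefactor, which requires careful control of the spine trajectories near the bottleneck and, when the minimiser is not unique, of the joint behaviour at several bottlenecks.
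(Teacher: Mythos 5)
Your upper bound follows the paper's own route: the monotonicity $\{N_T(D,t)\ne\emptyset\}\subseteq\{N_T(D,s)\ne\emptyset\}$ for $s\le t$, Markov's inequality against $\inf_{s\le t}\E|N_T(D,s)|$, the first-moment estimate of Theorem \ref{expected_growth_paths_thm} applied ball-by-ball so that the $\inf$--$\sup$ interchange is legitimate, and a compactness/semicontinuity step (the paper controls the non-compact part probabilistically via the sets $F_N$ of Lemma \ref{extreme_lem} rather than via energy level sets, but this is the same idea). No issues there.

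Your lower bound is a genuinely different strategy from the paper's, and it contains a concrete error in its central estimate. By your own many-to-two computation the integrand in the second moment has exponent $2k-K(f,\tau/T)$ with $\tau/T\in[0,\theta]$; since $K(f,\theta)=k=\inf_{s\le\theta}K(f,s)$, the maximum of this exponent is $2k-k=k$, attained at the bottleneck, \emph{not} $2k$. Consequently $\E[Z_T^2]=e^{(k+o(1))T^{\frac{2+p}{2-p}}}\approx \E[Z_T]\,e^{o(T^{\frac{2+p}{2-p}})}$, which for $k<0$ is exponentially \emph{larger} than $(\E Z_T)^2=e^{(2k+o(1))T^{\frac{2+p}{2-p}}}$; the announced bound $\E[Z_T^2]\le(\E Z_T)^2e^{o(T^{\frac{2+p}{2-p}})}$ is therefore false and cannot be the target of the "Laplace-type estimate". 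The strategy is salvageable: with the corrected bound, Paley--Zygmund still gives $\P(Z_T\ge1)\ge(\E Z_T)^2/\E[Z_T^2]\approx e^{(2k-k)T^{\frac{2+p}{2-p}}}=e^{kT^{\frac{2+p}{2-p}}}$, which is exactly the rate you need at the bottleneck, so the conclusion survives once the bookkeeping is fixed, but the "heart of the proof" as you state it is aimed at an unattainable inequality. Two further points need attention: the restart after the bottleneck uses Theorem \ref{growth_paths_thm} for a BBM started at time $\theta T$ from a position of order $T^{\frac{2}{2-p}}f(\theta)\ne0$, uniformly over starting points in the tube, which is not literally what is proved (the paper only remarks elsewhere that such extensions carry over); and the second-moment upper bound itself would have to be proved, which is nontrivial. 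By contrast, the paper's lower bound avoids second moments entirely: it writes $\P(N_T(A,\theta)\ne\emptyset)=\P(Z_T(\theta T)>0)=\Qt_T[Z_T(\theta T)^{-1}]\ge\Qt_T\big[\Qt_T[Z_T(\theta T)\mid\Gt_T]^{-1}\big]$ by Jensen, bounds the conditional expectation by the spine decomposition (which produces the factor $e^{-\inf_{\phi\le\theta}K(f,\phi)T^{\frac{2+p}{2-p}}}$ automatically, since $K$ is evaluated at the spine's birth times), and concludes with Fatou's lemma; this is both shorter and free of the uniform restart issue.
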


The case $p=0$ was proved by Lee \cite{lee:large_deviations_for_branching_diffusions} and again by Hardy and Harris \cite{hardy_harris:spine_bbm_large_deviations}.

\subsection{Relationship to differential equations}\label{S:pdelinks}

In this section we try to show how several of our results can actually be guessed from heuristic manipulations of partial differential equations. Although the whole discussion is informal it leads us to a theorem which gives an alternative description of $\nas$ and $\nexp$.

The expected density $\rho(x,T)$ of points at position $x$ and time $T$ in
the BBM we are studying can be written as the solution of the partial differential equation
\begin{equation}
\frac{\partial\rho}{\partial T} = \frac12 \frac{\partial^2\rho}{\partial x^2} +m\beta |x|^p \rho.
\label{eqrho}
\end{equation}
Corollary~\ref{cor_unconstrained} suggests that for large~$T$,
\begin{equation}
\log\rho(x,T) \sim
T^{\frac{2+p}{2-p}}
\nexp(z)
\quad\text{with $z=\dfrac x {T^{\frac{2}{2-p}}}$}.
\label{ansatz}
\end{equation}
If we then
plug~\eqref{ansatz} into~\eqref{eqrho} we get a differential equation: for $T$ large, neglecting a term of order $T^{-(2+p)/(2-p)}\nexp''(z)$, we get
\begin{equation}
\frac12\left(\nexp'(z)+\frac{2z}{2-p}\right)^2=\frac{2+p}{2-p}\nexp(z)+\frac{2z^2}{(2-p)^2}-m\beta|z|^p.
\label{eqK}
\end{equation}

It is not obvious at first that the $\nexp(z)$ defined in \eqref{nexp} is
indeed a solution of~\eqref{eqK} but we will show that this is the case in Theorem~\ref{constrained_id} below.
The differential equation, however, is not enough to fully obtain $\nexp(z)$ as there is no obvious initial condition.

A natural question is now: is there a differential equation of which $\nas(z)$
is solution? $\nas(z)$ describes the growth rate of the almost sure number of
particles at rescaled position $z$ and it is different from the expected growth rate $\nexp(z)$
because of some extremely rare events (on which particles go far away and reproduce a lot) which contribute to $\nexp(z)$
and not to $\nas(z)$ in the $T\to\infty$ limit.

With this in mind, we now consider the inhomogeneous BBM with killing, where we remove any particles
that ever cross the two space-time boundaries $(s,\pm \bar x(s))_{s\geq 0}$, for some given function $\bar x(\cdot)$.
The expected density $\tilde \rho(x,T)$ of particles at position $x$ at time
$T$ in this BBM with killing is then a solution of
\begin{equation}
\begin{cases}\displaystyle\frac{\partial\tilde\rho}{\partial T}
=\frac12\frac{\partial^2\tilde\rho}{\partial x^2}+m\beta |x|^p \tilde\rho,
\\[1ex]
\displaystyle \tilde\rho\big(\pm \bar x(T),T\big)=0.
\end{cases}
\label{absorb}
\end{equation}
If the absorbing boundary $\bar x(\cdot)$ is taken to be the typical trajectory of the
right-most particle, $r(\cdot)$, (see Theorem \ref{rightmost_thm} and equation \eqref{r(s)}), this in effect kills all
those rare difficult paths and one might hope heuristically that
$\tilde\rho(x,T)$ describes, in some sense, the almost sure density of points in the
original problem without the absorbing boundaries:
\begin{equation}
\log\tilde\rho(x,T) \sim
T^{\frac{2+p}{2-p}} \nas(z)
\quad\text{with $z=\dfrac x {T^{\frac{2}{2-p}}}$}.
\label{ansatz2}
\end{equation}
In fact, this heuristic does turn out to be the case.
Indeed, the a.s. growth rate, $\nas(z)$, is given by maximising over paths that end at $z$ and do not undergo extinction at any point, as in \eqref{Kasconstraint} and Corollary~\ref{C:corol nas}.
However,  $\nas(z)$ can also be expressed by maximising over paths that end at $z$ and never go beyond the right/left-most paths, as in \eqref{Kaskilling} of Theorem~\ref{constrained_thm}.
This equivalent representation is exactly the same as would be obtained for the expected growth rate in the BBM with killing at $\pm r(s)$,
hence $\nas(z)$ is indeed given by equation \eqref{ansatz2} where $\tilde\rho$ satisfies PDE \eqref{absorb}.

Further, we might even hope to determine $r(\cdot)$ in a self-consistent way: if, in
\eqref{absorb}, $\bar x(T)$ is significantly smaller than the almost sure
position of the right-most particle, then many particles will gather close to
the line and we can expect $\tilde\rho(\bar x(T)-1,T)$ to be large. On the
other hand, if $\bar x(T)$ is chosen significantly larger than the almost sure
position of the right-most particle, then very few particles should come
close to the boundary and $\tilde\rho(\bar x(T)-1,T)$ should be small. Only
for $\bar x(T)$ close to the almost sure
position of the right-most particle can we expect $\tilde\rho(\bar x(T)-1,T)$
to be of order one. We therefore reformulate \eqref{absorb} into
\begin{equation}
\begin{cases}\displaystyle\frac{\partial\tilde\rho}{\partial T}
=\frac12\frac{\partial^2\tilde\rho }{\partial x^2}+m\beta |x|^p \tilde\rho,
\\[1ex]
\displaystyle \tilde\rho\big(\pm \bar x(T),T\big)=0,\\[1ex]
\displaystyle \frac{\partial\tilde\rho}{\partial x}(\pm \bar x(T),T\big)=\pm1,
\end{cases}
\label{absorb2}
\end{equation}
where we solve now simultaneously for the two unknowns $\tilde\rho$ and
$\bar x$.

We now plug \eqref{ansatz2} into \eqref{absorb2} and, as the equations in
the bulk for $\rho$ and $\tilde\rho$ are the same, we obtain the
same equation~\eqref{eqK} for $\nas(z)$ as for $\nexp(z)$, albeit with different boundary conditions.
From (\ref{absorb2}) we see that $\tilde \rho(x,T)=0$ for $x >\bar x(T)$. Thus (\ref{ansatz2}) means that  $\nas(z)$ is not defined for $z$ above some threshold value.
Therefore we look for a solution of (\ref{eqK}) only defined up to a finite value of $z$, which can only be the  rescaled almost sure position $\bar z$ of the right-most particle, as defined in (\ref{r(s)}).
The only way for this to happen is for the right-hand side of \eqref{eqK} to vanish at
$z=\bar z$:
\begin{equation}\label{rhs}
\frac{2+p}{2-p}\nas(\bar z)+\frac{2\bar z^2}{(2-p)^2}-m\beta|\bar z|^p=0.
\end{equation}
(When the right-hand side reaches 0, one can check that the second derivative diverges and the solution cannot be continued beyond that point.)
Furthermore, one must have $\nas(\bar z)=0$ from the second boundary condition in (\ref{absorb2}); indeed $\nas(\bar z)>0$ would correspond to having increasingly many particles at the boundary while $\nas(\bar z)<0$ would mean that the number of particles next to the boundary goes to zero. We thus recover the expression for $\bar z$ given in~\eqref{r(s)} as a solution of \eqref{rhs}.

From the descriptions of $\nexp(z)$ and $\nas(z)$ given in Theorems~\ref{unconstrained_thm} and~\ref{constrained_thm}, one can show that, indeed,
\begin{thm}\label{constrained_id}
$\nexp(z)$ and $\nas(z)$ as defined in Theorems~\ref{unconstrained_thm}
and~\ref{constrained_thm} are, for $z\ge0$,  two solutions of the following
differential equation:
\begin{equation}
K'(z)=-\frac{2z}{2-p} +
\sqrt{2\frac{2+p}{2-p}K(z)+\frac{4z^2}{(2-p)^2}-2m\beta z^p}.
\label{eqK2}
\end{equation}
\end{thm}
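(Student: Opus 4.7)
The plan is to derive the stated ODE for both $\nexp$ and $\nas$ by combining three ingredients: the envelope theorem applied to the variational characterisations of Theorems~\ref{unconstrained_thm} and~\ref{constrained_thm}, the Beltrami/energy first integral of the Euler--Lagrange equation, and an integration-by-parts identity obtained by multiplying the Euler--Lagrange equation by $f$. Writing $L(x,\dot x) := m\beta|x|^p-\tfrac12\dot x^2$ so that $K(f,1)=\int_0^1 L(f,f')\,\diffd s$, a routine envelope computation yields
\[\nexp'(z) \;=\; -h_z'(1),\qquad \nas'(z) \;=\; -g_z'(1).\]
For $\nexp$, only the boundary variation at $s=1$ survives since $h_z(0)=0$ is fixed. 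For $\nas$ the matching time $s_z$ varies with $z$, but the extra contributions vanish: the Lagrangian term at the moving endpoint is $L(r(s_z),r'(s_z))=0$ because $\tfrac12 r'^2=m\beta r^p$, and differentiating the matching condition $g_z(s_z(z))=r(s_z(z))$ together with $g_z'(s_z)=r'(s_z)$ forces $\partial_z g_z(s_z)=0$.

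Let $a=0$ for $f=h_z$ and $a=s_z$ for $f=g_z$. On $[a,1]$ the optimiser satisfies $f''+m\beta p\,f^{p-1}=0$. Multiplying by $f'$ shows that $E_0:=\tfrac12 f'(s)^2+m\beta f(s)^p$ is constant on $[a,1]$; multiplying instead by $f$ and integrating by parts on $[a,1]$ gives
\[f(1)f'(1)-f(a)f'(a) \;=\; \int_a^1 f'(s)^2\,\diffd s \;-\; m\beta p\int_a^1 f(s)^p\,\diffd s.\]
Combining this with the integrated energy relation $\tfrac12\int_a^1 f'^2\,\diffd s + m\beta\int_a^1 f^p\,\diffd s = E_0(1-a)$ and with $K(f,1)-K(f,a)=m\beta\int_a^1 f^p\,\diffd s-\tfrac12\int_a^1 f'^2\,\diffd s$, a short elimination of the two integrals yields the master identity
\[(2+p)\bigl[K(f,1)-K(f,a)\bigr] \;=\; (2-p)E_0(1-a) \;-\; 2f(1)f'(1) \;+\; 2f(a)f'(a).\]
For $h_z$ this reduces immediately to $(2+p)\nexp = (2-p)E_0 + 2z\,\nexp'$, and for $g_z$ (using $K(g_z,s_z)=0$ because $L(r,r')\equiv 0$) we obtain $(2+p)\nas = (2-p)E_0(1-s_z) + 2z\,\nas' + 2r(s_z)r'(s_z)$.

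The main obstacle, and genuinely the only subtle point, is showing that the two extra terms in the $\nas$ identity telescope so that $\nas$ ends up satisfying the same first-order equation as $\nexp$. This is where the explicit form of $r$ pays off: from $r'=\sqrt{2m\beta}\,r^{p/2}$ one obtains
\[s_z \;=\; \int_0^{r(s_z)}\frac{\diffd u}{\sqrt{2m\beta}\,u^{p/2}} \;=\; \frac{2\,r(s_z)^{(2-p)/2}}{(2-p)\sqrt{2m\beta}},\]
while matching at $s_z$ gives $E_0 = 2m\beta r(s_z)^p$. A direct check then shows $(2-p)E_0\,s_z = 2\sqrt{2m\beta}\,r(s_z)^{(2+p)/2} = 2r(s_z)r'(s_z)$, precisely the extra boundary term, so that $(2-p)E_0(1-s_z)+2r(s_z)r'(s_z)=(2-p)E_0$ and both $\nexp$ and $\nas$ satisfy
\[(2+p)\,K(z) - 2z\,K'(z) \;=\; (2-p)\bigl[\tfrac12 K'(z)^2 + m\beta z^p\bigr],\]
after substituting $E_0 = \tfrac12 K'(z)^2+m\beta z^p$ from the envelope step. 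Completing the square gives $(K'(z)+2z/(2-p))^2 = \tfrac{2(2+p)}{2-p}K(z)+\tfrac{4z^2}{(2-p)^2}-2m\beta z^p$; taking the positive root (consistent with $K'=0$ at the maximisers $\bestzexp,\bestzas$, where the right-hand side reduces to $(2z/(2-p))^2$) produces \eqref{eqK2}.
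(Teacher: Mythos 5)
Your proposal is correct and is essentially the paper's own argument: the envelope identity $K'(z)=-f_z'(1)$ (with the moving-boundary contributions at $s_z$ vanishing because $\tfrac12 r'^2=m\beta r^p$ and $\partial_z g_z(s_z)=0$), the two integral identities obtained by multiplying the Euler--Lagrange equation by $f$ and by $f'$, the linear combination eliminating the integrals, the cancellation of the $s_z$ boundary terms via the explicit form of $r$, and completing the square to reach \eqref{eqK}. The only deviation is the choice of the plus root: you verify consistency at the maximiser $\hat z$ where $K'(\hat z)=0$, whereas the paper establishes $K'(0^+)\ge 0$ by a path-modification argument; both are pointwise sign checks of comparable strength, so this is a cosmetic rather than substantive difference.
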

(Note that \eqref{eqK2} is not implied by \eqref{eqK} as we could have put
a minus sign in front of the square root. We will show in the proof section
that $K'(0^+)\ge0$ which justifies the choice of the plus sign. For $z\le0$,
by parity of $K(z)$, the other sign must be chosen.)

This approach from partial differential equations can be extended to the
case $p=2$. Considering only the particles that do not go further away than
the almost sure position of the right-most, we start from \eqref{absorb2}
with $p=2$. The scaling function \eqref{ansatz2} obviously does not work
for $p=2$, but if one plugs
\begin{equation}
\log\tilde\rho(x,T)\approx e^{2A T} L(z)\qquad\text{with $z=x e^{-A T}$}
\label{ansatz3}
\end{equation}
into~\eqref{absorb2} one gets that
\begin{equation}
\frac12\big(L'(z)+Az\big)^2= 2A L(z)+\frac12A^2z^2-m\beta z^2
\label{eqL}
\end{equation}
where a term of order $e^{-2AT}L''(z)$ has been neglected. \eqref{eqL} has
exactly the same structure as \eqref{eqK} except that $A$ is {\em a priori} an
unknown quantity. However, requesting as for $p<2$ that the right-hand
side vanishes at $z=\bar z$ and that $L(\bar z)=0$ implies that
\begin{equation}
A=\sqrt{2m\beta}
\label{predicA}
\end{equation}
so that the equation reads
\begin{equation}
\frac12\big(L'(z)+\sqrt{2m\beta}\,z\big)^2=2\sqrt{2m\beta}\,L(z),
\label{eqL2}
\end{equation}
or, taking the square root for $z\ge0$,
\begin{equation}
L'(z)=-\sqrt{2m\beta}\,z+2\sqrt{\sqrt{2m\beta}\,L(z)}.
\label{eqL3}
\end{equation}
(We put a plus sign in front of the square root by analogy with the $p<2$
case.)

Note already that \eqref{predicA} with \eqref{ansatz3} allows one to
recover the results from Berestycki \textit{et al.}~\cite{berestycki_et_al:as_growth_x2_bbm}:
\begin{equation}
\text{For $p=2$ and $m=1$,}
\lim_{T\to\infty}\frac1T\log\log|N(T)|=2\sqrt{2\beta}\quad\text{almost surely},
\label{N p=2}
\end{equation}
and from Harris and Harris~\cite{harris_harris:inhom_breeding}:
\begin{equation}
\text{For $p=2$ and $m=1$,}
\lim_{T\to\infty}\frac1T\log R_T =\sqrt{2\beta} \quad\text{almost surely}.
\label{R p=2}
\end{equation}
(Although the two papers~\cite{berestycki_et_al:as_growth_x2_bbm}  and~\cite{harris_harris:inhom_breeding} only concerned the binary branching ($m=1$) case, their results \eqref{N p=2} and~\eqref{R p=2} could easily be extended to the more general branching process of the present paper, with an arbitrary value of $m$.)

Going further, \eqref{eqL3} can be solved by making the change
of variable $L(z)=\sqrt{2m\beta}\,z^2\phi(z)^2$ with $\phi(z)\ge0$. One
gets for $z\ge0$
\begin{equation}
2z\phi^2(z)+2z^2\phi(z)\phi'(z)=-z+2z\phi(z)
\end{equation}
For $z>0$ the variables can be separated
\begin{equation}
\frac{2\phi(z)\phi'(z)}{2\phi^2(z)-2\phi(z)+1}=-\frac1z,
\end{equation}
and, after integration of both sides and simplification, one gets
an implicit form for $L(z)$:
\begin{equation}
\begin{cases}
\displaystyle L(z)=\sqrt{2m\beta}\,z^2\phi^2\\
\displaystyle
z=C\frac{\exp\left[\arctan(1-2\phi)\right]}{\sqrt{2\phi^2-2\phi+1}}\qquad\text{with $\phi\ge0$}
\end{cases}
\end{equation}
where $C$ is an integration constant.
By taking $\phi\to\infty$ one
obtains $L(0)$. The rescaled position $\bar z$ of the right-most particle is
when $L(\bar z)=0$ or $\phi=0$. From \eqref{eqL2}, the optimal position $\hat z$ where $L$ is maximal is such that $L(\hat z)=\sqrt{2m\beta}\,{\hat z}^2/4$ or $\phi=1/2$. This leads to
\begin{equation}
L(0)=\sqrt{2m\beta}\,\frac{C^2}2 e^{-\pi},\quad
\bar z=C e^{\frac\pi4},\quad
\hat z = C\sqrt2,\quad
L(\hat z)=\sqrt{2m\beta}\,\frac{C^2}2.
\label{p=2}
\end{equation}

\begin{rmk}
We have no theory for the value of the integration constant $C$; in fact we
believe that $C$ is realisation-dependent. This method allows us however to
make some conjectures on the values of several ratios such as $\bar z/\hat
z = $(the position of the right-most)/(the position where the density of
particles is the highest), or $L(\hat z)/\bar z^2=\log
|N_T|/(\text{position of the right-most})^2$.
We have no demonstration for these conjectures. We simply observe that, for
instance, the value of the ratio $\bar z / \hat z$ computed in the $p<2$
case (see equation~\eqref{r(s)} and Theorem~\ref{constrained_thm})
converges as $p\to 2$ to the ratio predicted by~\eqref{p=2}.
\end{rmk}

\subsection{Explicit calculations for $p=1$}

It is interesting to note that for $p=1$ (as well as for the easier case $p=0$) the equations given by Theorems \ref{unconstrained_thm} and \ref{constrained_thm} can be solved explicitly.
\begin{itemize}
\item The rightmost particle satisfies $R_T \sim \frac1{2}{m\beta}T^2$
a.s.
\item The optimal path for expected growth with end-point $z\ge0$ is given by
\[h_z(s) = -\frac{1}{2}m\beta s^2 + (z+\frac{1}{2}m\beta)s\]
with growth rate
\[\nexp(z) = \frac{1}{24}
m^2\beta^2 +\frac{1}{2}m\beta z - \frac{1}{2}z^2.\]
\item The optimal end-point for expected growth is $\bestzexp = \frac{1}{2}m\beta$, giving the optimal path
\[h_{\bestzexp}(s) = -\frac{1}{2}m\beta s^2 + m\beta s\]
and total expected growth rate $\bestnexp = m^2\beta^2/6$.
\item $r(s)=\frac1{2}{m\beta}s^2$ and $\bar z = \frac12m\beta$.
\item The optimal path for almost sure growth with end-point $z\in[0,\bar z]$ is given by
\[g_z(s)=\left\{\begin{array}{ll} \frac{m\beta}{2}s^2  & \hbox{if } s \in[0,s_z]\\
-\frac{m\beta}{2}s^2 + 2m\beta s_z s -m\beta s_z^2 & \hbox{if } s\in(s_z,1]\end{array}\right.\]
where $s_z=1-\sqrt{\frac{1}{2}-\frac{z}{m\beta}}$.
The corresponding growth rate is
\[\nas(z) = m^2\beta^2\left(\frac{1}{2} - \frac{z}{m\beta} - \frac{1}{6}\left(2-\frac{4z}{m\beta}\right)^{3/2}\right).\]
\item The optimal end-point for almost sure growth is $\bestzas = \frac{m\beta}{4}$, giving the optimal path
\[g_{\bestzas}(s) = \left\{\begin{array}{ll} \frac{m\beta}{2}s^2 & \hbox{if } s\in\left[0,\frac{1}{2}\right]\\
  -\frac{m\beta}{2}s^2 + m\beta s  -
\frac{m\beta}{4} & \hbox{if } s\in\left(\frac{1}{2},1\right]\end{array}\right.\]
and total almost sure growth rate $\bestnas = m^2\beta^2/12$.
\end{itemize}
We note in particular that there is positive expected growth for all $|z| < m\beta\big(1/2+1/\sqrt{3}\big)$ despite almost sure extinction for all $|z|>m\beta/2$. Only for $p=0$ do the almost sure and expected growth match.

\begin{figure}[h!]
  \centering
\includegraphics[width=\textwidth]{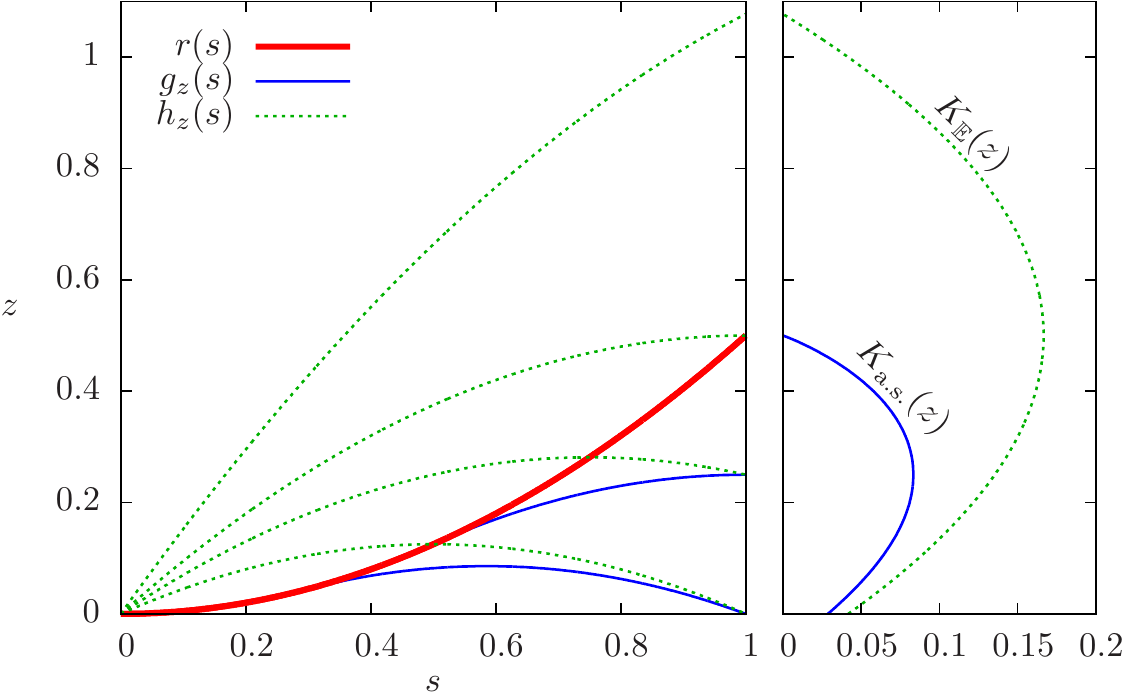}
  \caption{For $p=1$ and $m\beta=1$, the thick (red) line in the left-hand graph shows $r(s)$, the (rescaled) path taken by the right-most particle. The thinner lines show, for  various values of the endpoint $z$, the optimal paths $g_z(s)$ for almost sure growth (plain blue lines) and $h_z(s)$ for expected growth (dashed green lines). In the right-hand graph we show the profiles $\nas(z)$ and $\nexp(z)$ of the number of particles alive at each position $z$ for almost sure growth (plain blue) and expected growth (dashed green).}
\end{figure}

\subsection{Proof strategy}\label{heuristic_sec}

As already pointed out in the introduction, the results in expectation (Theorems \ref{expected_growth_paths_thm} and \ref{unconstrained_thm}) can be derived in a rather straightforward fashion from Shilder's large deviation theorem and the use of the so-called {\it many-to-one principle} (see Section \ref{expected_growth_sec}).
Some fairly standard large deviations techniques (using for example the exponential tightness of Brownian motion) can then be used to deduce the large deviations behaviour of the system seen in Theorem~\ref{large_devs_thm} (see Section \ref{large_devs_sec}).

For the almost sure growth along paths, however, we need something stronger. Using the many-to-one lemma
we construct in Section~\ref{spine_section} processes
that are non-negative martingales which count the numbers of particles
whose paths lie in certain sets.
We then use the fact that these martingales have almost surely finite
limits to obtain an almost sure upper bound on the number of particles
whose rescaled paths remain close to $f$. It is also quite usual, at least within the world of branching processes, that if an additive martingale --- like ours --- is uniformly integrable, then it has a strictly positive limit, giving us our almost sure lower bound. Again this is true in our case, although showing it is highly non-trivial --- a large part of the work for this article is spent in proving this lower bound.

We then set about proving the results concerning how many particles follow particular paths. In Section \ref{growth_paths_sec} we prove Theorem \ref{growth_paths_thm}, applying many of the results obtained in the previous two sections.

We move on in Section \ref{sec:optpaths} to derive the optimal paths seen in Theorem \ref{constrained_thm} and Theorem \ref{unconstrained_thm}, and study these paths further (in particular proving Theorem \ref{constrained_id}) in Section \ref{further_optpaths_sec}.



\section{A family of spine martingales}\label{spine_section}

\subsection{The spine setup}

A key idea in our proofs will be the use of certain additive martingales.
These martingales can be used to define changes of measure under which one
particle behaves differently than under the law $\P$ of our branching
particle system. The tools introduced in this way are extremely useful, and should be fairly intuitive. As they are now well-embedded in the branching process literature, we will leave out several proofs in this section, and refer the interested reader to Hardy and Harris' very general formulation in \cite{hardy_harris:spine_approach_applications}.

\vspace{3mm}

We first embellish our probability space by keeping track of some extra information about one particular infinite line of descent or \emph{spine}. This line of descent is defined as follows: our one initial particle is part of the spine; when this particle dies, we choose one of its children uniformly at random to become part of the spine. We continue in this manner: when the spine particle dies, we choose one of its children uniformly at random to become part of the spine. In this way at any time $t\geq0$ we have exactly one particle in $N(t)$ that is part of the spine. We refer to both this particle and its position with the label $\xi_t$; this is a slight abuse of notation, but it should always be clear from the context which meaning is intended. The spatial motion of the spine, $(\xi_t)_{t\geq0}$, is a standard Brownian motion.

The resulting probability measure we denote by $\Pt$, and we find need for four different filtrations to encode differing amounts of this new information:
\begin{itemize}
\item{$\Fg_t$ contains all the information about the original system up to time $t$. However, it does not know which particle is the spine at any point. Thus it is simply the natural filtration of the branching Brownian motion.}
\item{$\Ft_t$ contains all the information about both the BBM and the spine up to time $t$.}
\item{$\Gt_t$ contains all the information about the spine up to time $t$, including the birth times of other particles along its path and how many children are born at each of these times; it does not know anything about the rest of the tree.}
\item{$\Gg_t$ contains just the spatial information about the spine up to time $t$; it does not know anything about the rest of the tree.}
\end{itemize}
We note that $\Fg_t \subset \Ft_t$ and $\Gg_t\subset\Gt_t\subset\Ft_t$, and also that $\Pt$ is an extension of $\Pb$ in that $\Pb = \Pt |_{\Fg_\infty}$.

\begin{lem}[Many-to-one lemma]
If $g(t)$ is  $\Ft_t$-measurable it can be written in the form
\[g(t) = \sum_{u\in N(t)} g_u(t)\ind_{\{\xi_t = u\}}\]
where each $g_u(t)$ is $\Fg_t$-measurable, and then
\[\Eb\left[\sum_{u\in N(t)} g_u(t)\right] = \tilde\Eb[e^{m\beta\int_0^t|\xi_s|^p \diffd s}g(t)].\]
\end{lem}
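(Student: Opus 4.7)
The plan is to follow the classical spine decomposition argument (as developed by Hardy and Harris \cite{hardy_harris:spine_approach_applications}), combining an explicit formula for the conditional law of the spine given the tree with a pure-jump exponential martingale running along the spine.

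First I would establish a combinatorial identity. By the construction of $\Pt$, given the tree $\Fg_t$ the spine $\xi_t$ is determined by a uniform daughter choice at each of the branching events it encounters, so for every $u \in N(t)$,
\[\Pt(\xi_t = u \mid \Fg_t) = \prod_{v\prec u}\frac{1}{1+A_v},\]
where the product runs over strict ancestors of $u$. Plugging the decomposition $g(t) = \sum_u g_u(t)\ind\{\xi_t = u\}$ into the weighted spine expectation below and using this formula, the offspring factors cancel to give
\begin{equation*}
\tilde\Eb\!\left[g(t)\prod_{v\prec\xi_t}(1+A_v)\right] = \Eb\!\left[\sum_{u\in N(t)} g_u(t)\right]. \tag{$\star$}
\end{equation*}
So the lemma reduces to replacing $\prod_{v\prec\xi_t}(1+A_v)$ inside $\tilde\Eb[g(t)\cdot]$ by $e^{m\beta\int_0^t|\xi_s|^p\,\diffd s}$.

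Next I would show that
\[\zeta_t := \prod_{v\prec\xi_t}(1+A_v)\exp\!\left(-m\beta\int_0^t|\xi_s|^p\,\diffd s\right)\]
is a unit-mean $(\Pt,\Ft_t)$-martingale. Pathwise: under $\Pt$, given the spine's spatial path $\Gg_t$, the spine's branching times form a Cox process of rate $\beta|\xi_s|^p$ with iid offspring sizes $1+A_i$ of mean $1+m$, so the multiplicative jump $(1+A)$ has Doleans--Dade compensator $m\beta|\xi_s|^p\,\diffd s$. Equivalently, the Poisson probability generating function applied conditionally on $\Gg_t$ gives
\[\tilde\Eb\!\left[\prod_{v\prec\xi_t}(1+A_v)\ \Big|\ \Gg_t\right] = e^{m\beta\int_0^t|\xi_s|^p\,\diffd s}.\]
To transfer this identity into $\tilde\Eb[g(t)\cdot]$, I would enlarge the conditioning to include the spine branching times and locations and the family of non-spine sub-BBMs (each of which, under $\Pt$, is an independent BBM launched from the corresponding branching location); given this richer $\sigma$-algebra, $g(t)$ depends on the multiplicities $A_v$ only through the \emph{number} of sub-trees spawned at each branching, and the remaining factorisation lets one replace $\prod_{v\prec\xi_t}(1+A_v)$ by its conditional mean. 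Combined with $(\star)$ this yields the many-to-one identity.

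The main obstacle is this last conditional-independence step, since $g(t)$ is fully $\Ft_t$-measurable and can probe the off-spine tree structure whose shape at each branching depends on $A_v$. The cleanest way to handle this is to realise $\Pt$ explicitly via the Hardy--Harris construction, in which the spine offspring sizes are drawn independently of the sub-BBM driving randomness, so the required factorisation is immediate; alternatively, one can first verify the identity for spine-path functionals $g_u(t) = h(X_u[0,t])$ --- where the Poisson generating-function computation delivers the claim directly --- and then extend to general $\Fg_t$-measurable $g_u$ by a monotone-class argument.
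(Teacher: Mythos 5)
Your combinatorial identity $(\star)$ and the Cox-process computation $\tilde\Eb\big[\prod_{v<\xi_t}(1+A_v)\,\big|\,\Gg_t\big]=e^{m\beta\int_0^t|\xi_s|^p\diffd s}$ are both correct, and together they already give a complete proof of the lemma in the form the paper actually uses it, namely when $g(t)$ is $\Gg_t$-measurable, i.e.\ when $g_u(t)$ is a fixed functional of the path $(X_u(s))_{s\leq t}$: condition the right-hand side of $(\star)$ on $\Gg_t$, pull $g(t)$ out, and insert the exponential. (For comparison: the paper gives no proof at all, deferring to Hardy and Harris, whose argument is essentially this one.)

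The gap is your final ``transfer'' step, and it cannot be repaired at the stated level of generality. The assertion that, after conditioning on the spine's path, its branching times and the non-spine sub-trees, ``$g(t)$ depends on the multiplicities $A_v$ only through the number of sub-trees spawned at each branching'' buys you nothing: the number of sub-trees spawned at a spine branching \emph{is} $A_v$, so an $\Ft_t$-measurable $g$ can read off the $A_v$'s exactly, and no factorisation replaces $\prod_{v<\xi_t}(1+A_v)$ by its conditional mean. Indeed the identity, and hence the lemma read literally for arbitrary $\Ft_t$-measurable $g$, is false: take $p=0$, binary branching ($A\equiv 1$, $m=1$), and $g_u(t)=2^{|u|}$ with $|u|$ the number of strict ancestors of $u$, so that $g(t)=2^{n(t)}$ with $n(t)$ the number of births along the spine. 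Then $\Eb\big[\sum_{u\in N(t)}2^{|u|}\big]=e^{3\beta t}$, whereas $\tilde\Eb\big[e^{m\beta t}2^{n(t)}\big]=e^{2\beta t}$ because $n(t)$ is Poisson$(\beta t)$ under $\Pt$; only the weighted identity $(\star)$ survives, since $\tilde\Eb\big[4^{n(t)}\big]=e^{3\beta t}$. For the same reason your fallback of proving the result for spine-path functionals $g_u(t)=h(X_u[0,t])$ and extending ``to general $\Fg_t$-measurable $g_u$ by a monotone-class argument'' cannot succeed: the class of $g_u$ for which the identity holds simply does not contain all $\Fg_t$-measurable functions. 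The honest statement (and the only one the paper ever invokes, e.g.\ in Lemma~\ref{extreme_lem} and Proposition~\ref{exp_prop}) restricts to $g$ depending on the spine only through its spatial motion, and for that statement your first two steps are already a correct and complete proof.
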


This lemma is extremely useful as it allows us to reduce questions about the entire population down to calculations involving just one standard Brownian motion --- the spine. A proof of a more general version of this lemma may be found in \cite{hardy_harris:spine_approach_applications}.

\subsection{Martingales and changes of measure}
For $p\in[0,2)$, $f\in C[0,1]$, $\theta\in[0,1]$ and $\varepsilon>0$, let
\[q:= \frac{2}{2-p} \in[1,\infty)\]
and define
\[N_T(f,\varepsilon,\theta) := \left\{u\in N(\theta T) : |X_u(t)-T^q f(t/T)|<\varepsilon T^q \hs \forall t\in[0,\theta T]\right\}\]
so that $N_T(f,\varepsilon,\theta) = N_T(B(f,\varepsilon),\theta)$, see
\eqref{NTD}, where
\[B(f,\varepsilon):=\left\{g\in C[0,1]:||f-g||_\infty <\varepsilon\right\}.\]
We look for martingales associated with these sets. For convenience, in this section we use the shorthand
\[N_T(t):= N_T(f,\varepsilon,t/T)\]
and
\[C_T(x,t):= \cos\left(\frac{\pi}{2\varepsilon T^q}(x-T^q f(t/T))\right).\]

The following Lemma is adapted from Lemma 6 in  \cite{harris_roberts:unscaled_paths}.
\begin{lem}
If $f\in C^2[0,1]$ then the process
\[V_T(t) := e^{\pi^2 t / (8\varepsilon^2 T^{2q}) + T^{q-1}\int_0^t f'(s/T)
\diffd\xi_s - \frac{1}{2}T^{2q-2}\int_0^{t} f'(s/T)^2 \diffd s}C_T(\xi_t,t),\hs\hsl t\in[0,T]\]
is a $\Gg_t$-local martingale under $\Pt$.
\end{lem}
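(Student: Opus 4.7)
The plan is a direct Itô calculation, exploiting the fact that under $\tilde{\mathbb{P}}$ the spine $\xi$ is a $\mathcal{G}_t$-Brownian motion. I would first split $V_T(t)$ as a product $V_T(t)=\phi(t)C_T(\xi_t,t)$, where
\[\phi(t):=\exp\!\Big(\tfrac{\pi^2 t}{8\varepsilon^2 T^{2q}}+T^{q-1}\!\!\int_0^t\! f'(s/T)\,\diffd\xi_s-\tfrac12 T^{2q-2}\!\!\int_0^t\! f'(s/T)^2\,\diffd s\Big).\]
Write $\alpha:=\pi/(2\varepsilon T^q)$ so that $C_T(x,t)=\cos(\alpha(x-T^qf(t/T)))$ and introduce $S_T(x,t):=\sin(\alpha(x-T^qf(t/T)))$ for brevity. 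The point is that each of the three non-trivial terms in the exponent of $\phi$ is there precisely to cancel some piece of the Itô expansion of $C_T(\xi_t,t)$.

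Next, I would apply Itô's formula to $\phi$: since $\log\phi$ has drift $\frac{\pi^2}{8\varepsilon^2 T^{2q}}-\frac12 T^{2q-2}f'(t/T)^2$ and diffusion coefficient $T^{q-1}f'(t/T)$, one gets
\[\diffd\phi(t)=\phi(t)\,\tfrac{\pi^2}{8\varepsilon^2 T^{2q}}\,\diffd t+\phi(t)\,T^{q-1}f'(t/T)\,\diffd\xi_t,\]
the Itô correction $\tfrac12 T^{2q-2}f'(t/T)^2$ exactly wiping out the $-\tfrac12$ term in the exponent. For $C_T(\xi_t,t)$ the partials are $\partial_x C_T=-\alpha S_T$, $\partial_{xx}C_T=-\alpha^2 C_T$, $\partial_t C_T=\alpha T^{q-1}f'(t/T)S_T$, so Itô gives
\[\diffd C_T(\xi_t,t)=\alpha T^{q-1}f'(t/T)S_T\,\diffd t-\alpha S_T\,\diffd\xi_t-\tfrac12\alpha^2 C_T\,\diffd t.\]

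Finally, I would apply the product rule. The cross-variation contributes $-\alpha\phi T^{q-1}f'(t/T)S_T\,\diffd t$, which cancels the drift coming from $\partial_t C_T$; and using $\tfrac12\alpha^2=\tfrac{\pi^2}{8\varepsilon^2 T^{2q}}$, the $\tfrac{\pi^2}{8\varepsilon^2 T^{2q}}$ drift from $\diffd\phi$ cancels the $-\tfrac12\alpha^2 C_T$ drift from $\diffd C_T$. All $\diffd t$ terms vanish and one is left with
\[\diffd V_T(t)=\phi(t)\bigl[T^{q-1}f'(t/T)C_T(\xi_t,t)-\alpha S_T(\xi_t,t)\bigr]\diffd\xi_t,\]
which is a $\mathcal{G}_t$-local martingale under $\tilde{\mathbb{P}}$, as required. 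The computation uses only that $f\in C^2[0,1]$ (so $f'$ is locally bounded and the stochastic integral is well-defined); no estimate is needed beyond the formal cancellation, so there is no real obstacle—the content of the lemma is precisely that the $\tfrac{\pi^2}{8\varepsilon^2 T^{2q}}$ prefactor and the Girsanov-type exponential are calibrated so the three potential drifts annihilate one another.
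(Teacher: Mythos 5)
Your proposal is correct and is exactly the paper's approach: the paper's proof simply notes that the spine is a standard Brownian motion under $\Pt$ and that $f\in C^2[0,1]$ makes $V_T$ smooth enough for It\^o's formula, deferring the explicit computation to Lemmas 5 and 6 of \cite{harris_roberts:unscaled_paths}. Your calculation carries out precisely that It\^o/product-rule cancellation, so it fills in the cited details rather than taking a different route.
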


\begin{proof}
Since the motion of the spine is simply a standard Brownian motion under $\Pt$, this is easily checked by applying It\^{o}'s formula (the sufficient conditions of, for example, Lawler \cite{lawler:intro_stochastic_processes} tell us that if $f\in C^2[0,1]$ then $V_T$ is sufficiently smooth for It\^o's formula to hold). See \cite{harris_roberts:unscaled_paths}, Lemmas 5 and 6, for the calculations.
\end{proof}

By stopping the process $(V_T(t), t\in[0,T])$ at the first exit time of the Brownian motion from the tube $\{(x,t): |T^q f(t/T)-x|< \varepsilon T^q\}$, we obtain also that
\[\zeta_T(t) := V_T(t) \ind_{\{|T^q f(s/T)-\xi_s| < \varepsilon T^q \hsl \forall s\leq t\}}, \hs\hs t\in[0,T]\]
is a non-negative $\Gg_t$-local martingale, and since its size is then clearly constrained it must in fact be a $\Gg_t$-martingale. As in \cite{hardy_harris:spine_approach_applications}, we may build from $\zeta_T$ a collection of $\Ft_t$-martingales $\tilde\zeta_T$ given by
\[\tilde\zeta_T(t) := \prod_{v<\xi_t} (1+A_v) e^{-m\beta\int_0^t |\xi_s|^p \diffd s}\zeta_T(t), \hs\hs t\in[0,T]\]
where we write $\{v<\xi_t\}$ for the set of strict ancestors of the spine particle at time $t$. When we project $\tilde\zeta_T(t)$ back onto $\Fg_t$ we get a new set of mean-one $\Fg_t$-martingales $(Z_T(t),t\geq0)$. These processes $Z_T$ are the main objects of interest in this section, and can be expressed for $t\in[0,T]$ as the sum
\[Z_T(t) = \sum_{u\in N_T(t)} \zeta_T^{(u)}(t)e^{-m\beta\int_0^t |X_u(s)|^p \diffd s}= \sum_{u\in N_T(t)} V_T^{(u)}(t)e^{-m\beta\int_0^t |X_u(s)|^p \diffd s}\]
where $\zeta_T^{(u)}(t)$ and  $V_T^{(u)}(t)$ are simply $\zeta_T(t)$ and $V_T(t)$ with the path $X_u(s)$ replacing $\xi(s),$ i.e.
\[V_T^{(u)}(t) := e^{\pi^2 t / (8\varepsilon^2 T^{2q}) + T^{q-1}\int_0^t
f'(s/T) \diffd X_u(s) - \frac{1}{2}T^{2q-2}\int_0^{t} f'(s/T)^2 \diffd s} C_T(X_u(t),t)  .\]

We now proceed to show that the martingales $Z_T$ are close to
\begin{equation}\label{almost_mg}
\sum_{u\in N(t)} \ind_{\{u \hbox{\scriptsize{ is close to }} f\}}
e^{T^{2q-2}\int_0^t \big[\frac{1}{2}f'(s/T)^2 -m\beta |f(s/T)|^p
\big]\diffd s}
\end{equation}
and that they have the properties that we discussed in Section \ref{heuristic_sec} --- specifically, we aim to show that for certain $f$ the martingales $Z_T$ are uniformly integrable and thus cannot be too small.
This is the key step to counting particles whose rescaled paths stay close
to $f$.

We define new measures, $\Qt_T$, via
\[\Qt_T|_{\Ft_t} := \tilde\zeta_T(t)\Pt|_{\Ft_t}\]
for $t\in[0,T]$ --- and note that
\[\Qt_T|_{\Fg_t} = Z_T(t)\Pt|_{\Fg_t} \hs\hs \hbox{and} \hs\hs \Qt_T|_{\Gg_t} = \zeta_T(t)\Pt|_{\Gg_t}.\]

\begin{lem}\label{change_of_meas_lem}
Under $\Qt_T$ the spine $(\xi_t,t\in[0,T])$ moves as a Brownian motion with drift
\[T^{q-1}f'(t/T) - \frac{\pi}{2\varepsilon T^q} \tan\left(\frac{\pi}{2\varepsilon T^q}(x - T^q f(t/T))\right)\]
when at position $x$ at time $t$; in particular,
\[|\xi_t - T^q f(t/T)|\leq\varepsilon T^q \hs \forall t\leq T.\]
Each particle $u$ in the spine dies at an accelerated rate $(m+1)\beta|x|^p$ when in position $x$, to be replaced by a random number $A_u+1$ of offspring where $A_u$ is taken from the \emph{size-biased} distribution relative to $A$, given by $\Qt_T(A_u = k) = (m+1)^{-1}(k+1)P(A=k)$ (note that this distribution does not depend on $T$). All non-spine particles, once born, behave \emph{exactly as they would under $\Pb$}: they move like independent standard Brownian motions, die at the normal rate $\beta |x|^p$, and give birth to a number of particles that is distributed like $1+A$.
\end{lem}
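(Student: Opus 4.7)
The lemma is a standard spine-change-of-measure computation, and the plan is to read off each component of the spine's behaviour from the corresponding factor in the Radon-Nikodym derivative $\tilde\zeta_T = \prod_{v<\xi_t}(1+A_v)\,e^{-m\beta\int_0^t|\xi_s|^p\,\diffd s}\,\zeta_T(t)$. The three factors act on three independent pieces of the spine data (its spatial motion, its death times, and its offspring numbers), so I would treat them separately and appeal to the general formalism of Hardy-Harris \cite{hardy_harris:spine_approach_applications} to conclude that the non-spine subtrees are untouched.

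For the spine's spatial motion, I would rewrite $V_T$ in a form that makes it transparent as a Doob $h$-transform combined with a Girsanov exponential. Setting $a=\pi/(2\varepsilon T^q)$ and $g(t)=T^qf(t/T)$, let $u(t,x)=e^{a^2t/2}\cos(a(x-g(t)))$ and $A_t=\int_0^t T^{q-1}f'(s/T)\,\diffd\xi_s-\tfrac12\int_0^tT^{2q-2}f'(s/T)^2\diffd s$, so that $V_T(t)=u(t,\xi_t)e^{A_t}$. An It\^o computation (using $u_t-\tfrac12 u_{xx}=aT^{q-1}f'(t/T)\sin(a(\xi_t-g(t)))$ and the cross variation $\langle u(\cdot,\xi),A\rangle$) shows the drift terms cancel, confirming that $V_T$ is a local martingale. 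Applying Girsanov's theorem, under $\tilde\Qb_T$ the spine acquires the drift
\[
\frac{u_x}{u}(t,\xi_t)+T^{q-1}f'(t/T)=T^{q-1}f'(t/T)-\frac{\pi}{2\varepsilon T^q}\tan\!\left(\frac{\pi}{2\varepsilon T^q}(\xi_t-T^qf(t/T))\right),
\]
which is the expression claimed. Because $\zeta_T$ is the version of $V_T$ stopped at the tube exit, the density vanishes off the event $\{|\xi_s-T^qf(s/T)|<\varepsilon T^q\ \forall s\le t\}$, so $\tilde\Qb_T$-almost surely the spine cannot exit the tube; the infinite repulsion supplied by the $\tan$ singularity at the boundary is the analytic manifestation of this fact.

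For the branching times and offspring law I would invoke the standard change-of-measure theorem for marked point processes. The spine's death events form, under $\tilde\Pb$, a point process with intensity $\beta|\xi_s|^p$ and independent marks $1+A_v$ with $\tilde\Eb[1+A]=m+1$. Multiplying the likelihood at each jump by $1+A_v$ and compensating with $\exp\!\big(-\int_0^t(\tilde\Eb[1+A]-1)\beta|\xi_s|^p\,\diffd s\big)=\exp(-m\beta\int_0^t|\xi_s|^p\,\diffd s)$ produces exactly the factor $\prod_{v<\xi_t}(1+A_v)e^{-m\beta\int_0^t|\xi_s|^p\,\diffd s}$ that appears in $\tilde\zeta_T$. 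The theorem then says that under $\tilde\Qb_T$ the spine dies at the accelerated rate $(m+1)\beta|\xi_s|^p$ and, conditional on a death event, the mark is size-biased: $\tilde\Qb_T(A=k)=(k+1)\Pb(A=k)/(m+1)$, matching the claim.

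Finally, for the non-spine particles I would observe that $\tilde\zeta_T$ is measurable with respect to the spine filtration $\tilde\Gg_t$, so conditional on the spine trajectory and its branching events the change of measure is trivial on the remaining subtrees. Together with the fact that under $\tilde\Pb$ these subtrees are, conditionally on the spine, independent BBMs of law $\Pb$ initiated at the spine's branching points, this shows the non-spine particles retain their original law under $\tilde\Qb_T$. The only technical point worth checking carefully is that $\tilde\zeta_T$ is a genuine (not merely local) martingale, but this is immediate from its construction: each of the three factors is a non-negative true martingale ($\zeta_T$ is bounded since $|C_T|\le 1$ and the exponential prefactor is controlled by the tube, and the size-biasing/compensator product is the standard Esscher-type true martingale under the assumption $\Eb[A\log A]<\infty$), so no localisation subtlety intervenes.
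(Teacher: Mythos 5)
Your proposal is correct and follows essentially the same route as the paper: the paper simply cites the Hardy--Harris spine formalism for the full description of the behaviour under $\Qt_T$ and proves directly only the tube-confinement claim, using exactly your argument that the density $\zeta_T(T)$ vanishes off the event $\{|\xi_s-T^qf(s/T)|<\varepsilon T^q\ \forall s\le T\}$. Your Girsanov/$h$-transform and size-biasing computations merely make explicit what the citation covers (one small slip: the intermediate identity should read $u_t+\tfrac12u_{xx}=-g'(t)\,u_x$ rather than $u_t-\tfrac12u_{xx}=\dots$, which does not affect the cancellation or the resulting drift).
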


\begin{proof}
A proof of this result can be found in \cite{hardy_harris:spine_approach_applications}. We will not use the precise drift of the spine except for the fact that it remains within the tube: to see this note that since the event is $\Gg_T$-measurable,
\[\Qt_T(\exists t\leq T : |\xi_t - T^q f(t/T)|>\varepsilon T^q) = \tilde\Eb[\zeta_T(T)\ind_{\{\exists t\leq T : |\xi_t - T^q f(t/T)|>\varepsilon T^q\}}] = 0\]
by the definition of $\zeta_T(T)$.
\end{proof}

Another important tool is the spine decomposition, which will allow us to bound the growth of the martingales $Z_T$ via one-particle calculations in a more delicate way than is possible via the many-to-one lemma. A proof of a more general version of the spine decomposition may be found in \cite{hardy_harris:spine_approach_applications}.

\begin{thm}[Spine decomposition]
$\Qt_T$-almost surely,
\[\Qt_T[Z_T(t)|\tilde\Gg_T] = \sum_{u<\xi_t} A_u V_T(S_u)e^{-m\beta\int_0^{S_u}|\xi_s|^p \diffd s} + V_T(t)e^{-m\beta\int_0^{t}|\xi_s|^p \diffd s}\]
where $\{u<\xi_t\}$ is the set of ancestors of the spine particle at time $t$, and $S_u$ denotes the time at which particle $u$ died and split into $1+A_u$ new particles.
\end{thm}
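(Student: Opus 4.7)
The plan is to partition the population $N(t)$ according to how each particle relates to the spine, use the independence structure of off-spine subtrees under $\tilde{\Qb}_T$, and reduce each subtree's contribution to an unconditional expectation by exploiting the martingale property of $Z_T$ itself.

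First I would describe the tree decomposition. Every particle $v \in N(t)$ either is the spine particle $\xi_t$, or else descends from exactly one off-spine child born at some time $S_u$ along the spine's history, where $u < \xi_t$ is a strict spine ancestor. At each spine fission time $S_u$, exactly $A_u$ off-spine children are produced at the position $\xi_{S_u}$; by Lemma~\ref{change_of_meas_lem}, conditional on $\tilde{\Gg}_T$ each of these $A_u$ children initiates an independent BBM that, from birth onward, evolves exactly as under $\Pb$.

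Next I would carry out the factorisation. For $v$ in an off-spine subtree rooted at $(S_u,\xi_{S_u})$, one has $X_v(s)=\xi_s$ for $s\le S_u$, so the stochastic and Lebesgue integrals defining $V_T^{(v)}(t)$ and $e^{-m\beta \int_0^t |X_v(s)|^p \diffd s}$ split cleanly into an $[0,S_u]$-piece depending only on the spine and an $[S_u,t]$-piece depending only on the subtree. The $[0,S_u]$-piece is $\Gt_T$-measurable and assembles exactly to $V_T(S_u)\,e^{-m\beta\int_0^{S_u}|\xi_s|^p \diffd s}$. The $[S_u,t]$-piece is the analogue of $Z_T(t)/Z_T(S_u)$ for a BBM shifted to start at $(S_u,\xi_{S_u})$, and its value at the initial time $S_u$ is simply $C_T(\xi_{S_u},S_u)$.

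To finish, I would apply the Markov property together with the $\Pb$-martingale property of the shifted $Z_T$ (which is a translate of the object constructed in Section~\ref{spine_section}): conditional on $\tilde\Gg_T$, the $\Pb$-expectation of the $[S_u,t]$-piece equals its starting value $C_T(\xi_{S_u},S_u)$. Multiplying by the $[0,S_u]$-piece, each off-spine subtree contributes $V_T(S_u)\,e^{-m\beta\int_0^{S_u}|\xi_s|^p \diffd s}/C_T(\xi_{S_u},S_u) \cdot C_T(\xi_{S_u},S_u) = V_T(S_u)\,e^{-m\beta\int_0^{S_u}|\xi_s|^p\diffd s}$ in $\tilde{\Qb}_T$-conditional expectation. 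Summing the $A_u$ off-spine contributions over all $u<\xi_t$ and adding the spine's own contribution $V_T(t)\,e^{-m\beta\int_0^t|\xi_s|^p\diffd s}$ gives the claimed identity.

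The only delicate point is the identification of the $[S_u,t]$-piece with a genuine martingale for the shifted BBM; this is essentially a bookkeeping matter (the cosine term $C_T(\cdot,\cdot)$ plays the role of the spatial harmonic transform for the tube, and the Girsanov exponential already incorporates the time-dependent drift $T^{q-1}f'(s/T)$), so no new analytic input beyond the It\^o argument behind $V_T$ is required. Because the whole construction fits the general framework of Hardy and Harris~\cite{hardy_harris:spine_approach_applications}, I would at this stage simply cite their result to avoid reproducing the standard computation.
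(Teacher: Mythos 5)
Your sketch is correct and matches the standard spine-decomposition argument of Hardy and Harris, which is exactly what the paper does here: it offers no proof of its own and simply cites \cite{hardy_harris:spine_approach_applications} for a more general version. One small wording slip: the $[0,S_u]$ factor of each off-spine term is $V_T(S_u)e^{-m\beta\int_0^{S_u}|\xi_s|^p\diffd s}/C_T(\xi_{S_u},S_u)$ rather than $V_T(S_u)e^{-m\beta\int_0^{S_u}|\xi_s|^p\diffd s}$ (the cosine factor belongs to the shifted $[S_u,t]$-piece), but your final computation already uses the correct form, so nothing is lost.
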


As we have already mentioned, the main aim of introducing these martingales is to give us a lower bound on the number of particles in $N_T(t)$. To do this we must bound the size of each of the terms in the sum. The following lemma is a simple bound for the Girsanov part of the martingale, using integration by parts.

\begin{lem}
\label{scaled_int_lem}
If $f\in C^2[0,1]$ and $f(0)=0$ then for any $u\in N_T(t)$, almost surely under both $\Pt$ and $\Qt_T$ we have
\begin{multline*}
\left|T^{q-1}\int_0^t f'(s/T) \diffd X_u(s) - T^{2q-2}\int_0^{t} f'(s/T)^2 \diffd s\right|\\
\leq 2\varepsilon T^{2q-2}\int_0^{t} |f''(s/T)| \diffd s + \varepsilon T^{2q-1}|f'(0)|.
\end{multline*}
\end{lem}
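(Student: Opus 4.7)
The plan is to integrate by parts in both integrals and then use the tube constraint $|X_u(s) - T^q f(s/T)| < \varepsilon T^q$ that defines membership in $N_T(t)$. Since $f \in C^2[0,1]$, the integrand $f'(s/T)$ is continuously differentiable and deterministic (in particular of bounded variation), so the stochastic integral $\int_0^t f'(s/T)\,\diffd X_u(s)$ can be evaluated pathwise as a Riemann--Stieltjes integral and the classical integration-by-parts formula holds almost surely under both $\Pt$ and $\Qt_T$:
\begin{equation*}
T^{q-1}\!\int_0^t f'(s/T)\,\diffd X_u(s) = T^{q-1}f'(t/T)X_u(t) - T^{q-1}f'(0)X_u(0) - T^{q-2}\!\int_0^t X_u(s)f''(s/T)\,\diffd s.
\end{equation*}
For the deterministic integral, I would rewrite $f'(s/T)^2\,\diffd s = f'(s/T)\,\diffd(Tf(s/T))$ and integrate by parts:
\begin{equation*}
T^{2q-2}\!\int_0^t f'(s/T)^2\,\diffd s = T^{2q-1}f'(t/T)f(t/T) - T^{2q-1}f'(0)f(0) - T^{2q-2}\!\int_0^t f(s/T)f''(s/T)\,\diffd s.
\end{equation*}

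Subtracting the two expressions, the boundary terms at $s=0$ vanish because $f(0)=0$ by hypothesis and $X_u(0)=0$ since every particle's ancestor starts at the origin. Regrouping, the difference of interest equals
\begin{equation*}
T^{q-1}f'(t/T)\bigl[X_u(t) - T^q f(t/T)\bigr] - T^{q-2}\!\int_0^t \bigl[X_u(s) - T^q f(s/T)\bigr]f''(s/T)\,\diffd s.
\end{equation*}
For $u\in N_T(t)$ the deviation $|X_u(s)-T^q f(s/T)|$ is bounded by $\varepsilon T^q$ on $[0,t]$, so taking absolute values gives the estimate
\begin{equation*}
\varepsilon T^{2q-1}|f'(t/T)| + \varepsilon T^{2q-2}\!\int_0^t |f''(s/T)|\,\diffd s.
\end{equation*}

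To finish, I would replace $|f'(t/T)|$ using the fundamental theorem of calculus: $f'(t/T) = f'(0) + T^{-1}\!\int_0^t f''(s/T)\,\diffd s$, whence $|f'(t/T)| \leq |f'(0)| + T^{-1}\!\int_0^t |f''(s/T)|\,\diffd s$. Substituting this bound into the previous display produces the stated inequality
\begin{equation*}
\varepsilon T^{2q-1}|f'(0)| + 2\varepsilon T^{2q-2}\!\int_0^t |f''(s/T)|\,\diffd s.
\end{equation*}
There is no real obstacle; the only step worth flagging is justifying the pathwise integration by parts for the stochastic integral, which is immediate because $f'(s/T)$ is $C^1$ and deterministic. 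Since everything is performed pathwise on the event $\{u \in N_T(t)\}$, the conclusion holds simultaneously under $\Pt$ and under $\Qt_T$.
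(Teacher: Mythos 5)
Your argument is correct and follows essentially the same route as the paper: It\^o/pathwise integration by parts for the stochastic integral, ordinary integration by parts for the deterministic one, the tube bound $|X_u(s)-T^q f(s/T)|<\varepsilon T^q$, and the splitting of $|f'(t/T)|$ into $|f'(0)|$ plus an integral of $|f''|$ (the paper just phrases this via $g(s)=T^q f(s/T)$ and handles the $\Qt_T$-statement by absolute continuity $\Qt_T\ll\Pt$ rather than by your pathwise remark). No gap.
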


\begin{proof}
From the integration by parts formula for It\^o calculus (since for any particle $u\in N(t)$, $(X_u(s), 0\leq s\leq t)$ is a Brownian motion under $\Pt$) we know that for any $g\in C^2[0,1]$, under $\Pt$,
\[g'(t)X_u(t) = \int_0^t g''(s)X_u(s) \diffd s + \int_0^t g'(s)\diffd X_u(s).\]
From ordinary integration by parts, if $g(0)=0$,
\[\int_0^t g'(s)^2 \diffd s = g'(t)g(t) - \int_0^t g(s)g''(s) \diffd s.\]
Now set $g(t)=T^q f(t/T)$ for $t\in[0,T]$. We note that if $u\in N_T(t)$ then $|X_u(s) - g(s)|< \varepsilon T^q$
for all $s\leq t$. Thus
\begin{align*}
&\biggl|T^{q-1}\int_0^t f'(s/T) \diffd X_u(s) - T^{2q-2}\int_0^{t} f'(s/T)^2 \diffd s\hspace{0.5mm}\biggr|\\
&= \biggl|\int_0^t g'(s) \diffd X_u(s) - \int_0^t g'(s)^2 \diffd s\hspace{0.5mm}\biggr|\\
&\leq \biggl|\hspace{0.5mm}g'(t)(X_u(t)-g(t)) - \int_0^t g''(s)(X_u(s)-g(s))\diffd s\hspace{0.5mm}\biggr|\\
& \leq  |g'(t)-g'(0)|\times |X_u(t)-g(t)| +|g'(0)|\times|X_u(t)-g(t)|\\
&\hspace{20mm}+ \int_0^t |g''(s)|\times|X_u(s)-g(s)|\diffd s\\
&\leq 2\varepsilon T^q\int_0^{t} |g''(s)| \diffd s + \varepsilon T^q |g'(0)|\\
&= 2\varepsilon T^{2q-2}\int_0^{t} |f''(s/T)|\diffd s + \varepsilon T^{2q-1}|f'(0)|
\end{align*}
almost surely under $\Pt$ and, since $\Qt_T \ll \Pt$ (on $\Ft_T$), almost surely under $\Qt_T$.
\end{proof}

The next lemma continues along the same theme, controlling the terms in $Z_T$ so that eventually we will be able to give a lower bound on $Z_T$ and then use this to give a lower bound on $N_T$ by showing that $Z_T$ looks something like (\ref{almost_mg}).

\begin{lem}\label{xp_lem}
For any $u\in N_T(t)$,
\[T^{2q-2} \inf_{g\in B(f,\varepsilon)} \int_0^{t} |g(s/T)|^p \diffd s \leq \int_0^t|X_u(s)|^p \diffd s \leq T^{2q-2} \sup_{g\in B(f,\varepsilon)} \int_0^{t} |g(s/T)|^p \diffd s.\]
\end{lem}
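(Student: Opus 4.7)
The plan is to reduce the lemma to a one-line bookkeeping statement via the natural scaling $r = s/T$. The entire content is that the rescaled trajectory of $u$ lies in $B(f,\varepsilon)$, and the exponent $T^{2q-2}$ appears for purely algebraic reasons rooted in the identity $qp = 2q-2$ (equivalently, the space--time rescaling \eqref{rescale} is the one that makes the $|x|^p$ branching term and Brownian quadratic variation balance).

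First, I would unpack the definition of $N_T(t) = N_T(f,\varepsilon,t/T)$: given $u \in N_T(t)$, the rescaled path
\[
\tilde X_u(r) := \frac{X_u(rT)}{T^q}, \qquad r\in[0,t/T],
\]
satisfies $|\tilde X_u(r) - f(r)| < \varepsilon$ for every $r\in[0,t/T]$. Extending $\tilde X_u$ continuously to all of $[0,1]$ in any way that keeps it within $\varepsilon$ of $f$ on the remaining interval (for instance by setting $\tilde X_u(r) := f(r)$ for $r\in(t/T,1]$, with a short continuous interpolation on a vanishing neighbourhood of $t/T$ if one wants continuity) places $\tilde X_u$ in $B(f,\varepsilon)$.

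Second, I would perform the change of variable $s \mapsto s/T$ inside the integral. Writing $X_u(s) = T^q\,\tilde X_u(s/T)$ for $s\in[0,t]$,
\[
\int_0^t |X_u(s)|^p\,\diffd s \;=\; T^{qp}\int_0^t |\tilde X_u(s/T)|^p\,\diffd s.
\]
A direct calculation from $q = 2/(2-p)$ gives $qp = 2q-2$, so the prefactor is exactly $T^{2q-2}$.

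Third, since $\tilde X_u \in B(f,\varepsilon)$ and the integrand $|\tilde X_u(s/T)|^p$ only probes $\tilde X_u$ on $[0,t/T]$ — where the constraint $|\tilde X_u - f| < \varepsilon$ was genuinely supplied by the hypothesis, not by our choice of extension — taking infimum and supremum over $g\in B(f,\varepsilon)$ immediately sandwiches $\int_0^t |\tilde X_u(s/T)|^p\,\diffd s$ between the stated bounds. Multiplying through by $T^{2q-2}$ yields the lemma.

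There is no substantial obstacle: this is a rescaling identity used in the next section to replace path-dependent integrals against each particle's trajectory by a deterministic functional of $f$ (and $\varepsilon$), so that after combining with Lemma \ref{scaled_int_lem} the martingale $Z_T$ becomes comparable to the expression \eqref{almost_mg}. The only minor care needed is in ensuring the extension of $\tilde X_u$ beyond $t/T$ does not spoil membership in $B(f,\varepsilon)$, which is trivial.
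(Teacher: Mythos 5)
Your proof is correct and is essentially the paper's own argument: the paper's one-line proof simply notes that, by definition of $N_T(t)=N_T(B(f,\varepsilon),t/T)$, there exists $g\in B(f,\varepsilon)$ with $X_u(s)=T^q g(s/T)$ on $[0,t]$, whence $\int_0^t|X_u(s)|^p\,\diffd s=T^{qp}\int_0^t|g(s/T)|^p\,\diffd s$ with $qp=2q-2$, exactly your rescaling step. Your extra care about extending the rescaled path from $[0,t/T]$ to $[0,1]$ within the open ball is a harmless elaboration of the identification the paper builds into its definition.
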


\begin{proof}
This follows immediately from the fact that if $u\in N_T(t)$ then (by definition) there exists $g\in B(f,\varepsilon)$ such that $X_u(s) = T^q g(s/T)$ for all $s\leq t$.
\end{proof}

We are now ready to start putting together the bounds that we have given. Combining Lemmas \ref{scaled_int_lem} and \ref{xp_lem} with the spine decomposition we obtain the following.

\begin{lem}\label{spine_domination}
If $f\in C^2[0,1]$, $f(0)=0$, $f'(0)=0$ and $m\beta\int_0^\phi |f(s)|^p \diffd s > \frac{1}{2}\int_0^\phi f'(s)^2 \diffd s$ for all $\phi\in(0,\theta]$, then for small enough $\varepsilon>0$ and any $T>0$ and $t\leq\theta T$, there exists $\eta>0$ such that
\[\Qt_T[Z_T(t)|\Gt_T] \leq \sum_{u<\xi_t} A_u e^{\pi^2/(8\varepsilon^2 T^{2q-1}) - \eta m \beta \int_0^{S_u}|\xi_s|^p \diffd s} + e^{\pi^2/(8\varepsilon^2 T^{2q-1}) - \eta m\beta\int_0^t|\xi_s|^p \diffd s}\]
$\Qt_T$-almost surely.
\end{lem}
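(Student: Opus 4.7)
The plan is to apply the spine decomposition stated earlier in the section, then bound each term of the form $V_T(s)e^{-m\beta\int_0^s|\xi_u|^p du}$ pointwise for $s\le t\le\theta T$ by its counterpart in the target right-hand side. Explicitly, the spine decomposition gives
\[\Qt_T[Z_T(t)\mid\Gt_T]=\sum_{u<\xi_t}A_u V_T(S_u)e^{-m\beta\int_0^{S_u}|\xi_s|^p ds}+V_T(t)e^{-m\beta\int_0^t|\xi_s|^p ds},\]
so if I can show $\Qt_T$-a.s.\ that $V_T(s)e^{-m\beta\int_0^s|\xi_u|^p du}\le e^{\pi^2/(8\varepsilon^2 T^{2q-1})-\eta m\beta\int_0^s|\xi_u|^p du}$ for every $s\le\theta T$, then the conclusion follows by summing term by term.

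For the pointwise estimate, I would first use $C_T\le 1$ together with $s\le T$ to absorb the factor $e^{\pi^2 s/(8\varepsilon^2 T^{2q})}$ appearing in $V_T(s)$ into the target constant $e^{\pi^2/(8\varepsilon^2 T^{2q-1})}$. By Lemma \ref{change_of_meas_lem} the spine stays trapped in the tube $|\xi_u-T^q f(u/T)|\le\varepsilon T^q$ under $\Qt_T$, and the integration-by-parts proof of Lemma \ref{scaled_int_lem} applies verbatim to the semimartingale $\xi$ (it uses only tube confinement and the hypothesis $f'(0)=0$), yielding
\[T^{q-1}\!\int_0^s f'(u/T)\,d\xi_u\le T^{2q-2}\!\int_0^s f'(u/T)^2\,du+2\varepsilon T^{2q-2}\!\int_0^s|f''(u/T)|\,du.\]
After the rescaling $v=u/T$, $\phi=s/T\in[0,\theta]$, and using tube confinement together with the elementary inequalities $|a^p-b^p|\le|a-b|^p$ (for $p\le 1$) or the mean value theorem (for $p>1$) to lower-bound $T^{-(2q-1)}\int_0^s|\xi_u|^p du$ in terms of $\int_0^\phi|f|^p$, the required estimate reduces to a deterministic integral inequality of the form
\[\tfrac12\int_0^\phi f'(v)^2\,dv+2\varepsilon\int_0^\phi|f''(v)|\,dv\le(1-\eta)m\beta\int_0^\phi|f(v)|^p\,dv,\]
up to an $\varepsilon$-vanishing perturbation of the right-hand side.

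The final task is to establish this deterministic inequality uniformly in $\phi\in(0,\theta]$, which is where the hypothesis $K(f,\phi)>0$ enters decisively. On any compact subinterval $[\phi_0,\theta]$ away from the origin, continuity of $\phi\mapsto K(f,\phi)$ and compactness convert the strict positivity into a uniform multiplicative gap $\tfrac12\int_0^\phi f'^2\le(1-\kappa)m\beta\int_0^\phi|f|^p$ for some $\kappa>0$, and then $\varepsilon$ is chosen small enough to absorb both $2\varepsilon\int|f''|$ and the $\varepsilon$-perturbations from the tube. The main obstacle is the regime $\phi\searrow 0$, where both sides vanish simultaneously and where the ratio $\int_0^\phi|f''|/\int_0^\phi|f|^p$ may in fact diverge, preventing a naive absorption of the error term. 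This is handled by combining the boundary conditions $f(0)=f'(0)=0$ with a Taylor analysis of $f$ at the origin: the strict positivity of $K(f,\cdot)$ forces enough regularity on $f$ near $0$ that the entire Girsanov exponent can be controlled directly in this regime by a quantity that is itself small and may be absorbed into $\pi^2/(8\varepsilon^2 T^{2q-1})$, which closes the argument with a single uniform $\eta>0$.
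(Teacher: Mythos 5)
Your skeleton is the same as the paper's: apply the spine decomposition, bound each term through its exponent using $C_T\le 1$, the tube confinement of the spine from Lemma \ref{change_of_meas_lem}, the integration-by-parts estimate of Lemma \ref{scaled_int_lem} (where $f'(0)=0$ kills the boundary term), and a comparison of $\int_0^s|\xi_u|^p\,\diffd u$ with $T^{2q-1}\int_0^{s/T}|f|^p$ as in Lemma \ref{xp_lem}, so that everything reduces to the deterministic inequality $\tfrac12\int_0^\phi f'^2+2\varepsilon\int_0^\phi|f''|\le(1-\eta)m\beta\int_0^\phi|f|^p$ up to $\varepsilon$-small perturbations, uniformly in $\phi\in(0,\theta]$. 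Your compactness/uniform-gap argument on $[\phi_0,\theta]$ is a reasonable way to justify this away from the origin, and matches in spirit the paper's one-line choice of $\varepsilon$ and $\eta$.

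The genuine gap is exactly at the step you yourself single out, $\phi\searrow0$, and the fix you propose cannot work. The quantity you want to absorb the residual cost into, $\pi^2/(8\varepsilon^2T^{2q-1})$, has $T^{2q-1}$ in the \emph{denominator} (it comes from $\pi^2 t/(8\varepsilon^2T^{2q})$ with $t\le T$, and $2q-1\ge1$), so it is a vanishing correction as $T\to\infty$; whereas the deficit at time $s=\phi T$ is $T^{2q-1}$ times roughly $2\varepsilon\int_0^\phi|f''|-(1-\eta)m\beta\int_0^\phi|f|^p$, which for $p>0$ is positive of order $\varepsilon\phi$ for small $\phi$ (since $f(0)=0$ forces $\int_0^\phi|f|^p=o(\phi)$) and therefore blows up with $T$. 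Nor does strict positivity of $K(f,\cdot)$ ``force regularity'' that rescues this: take $p=1$, $f(s)=s^2$ and $m\beta$ large, so the hypothesis holds for every $\phi$, yet $2\varepsilon\int_0^\phi|f''|=4\varepsilon\phi$ dominates $m\beta\int_0^\phi|f|=m\beta\phi^3/3$ for all $\phi\lesssim\sqrt{\varepsilon}$, no matter how small $\varepsilon$ is. So your argument does not close; any honest completion must treat the small-$\phi$ regime by a different mechanism — for instance by exploiting the cancellation between the Girsanov boundary term $T^{q-1}f'(\phi)\bigl(\xi_{\phi T}-T^qf(\phi)\bigr)$ and the breeding integral computed along the \emph{actual} spine path, rather than bounding the two independently as both you and the paper do. For comparison, the paper's proof never touches the $\pi^2$ term at this stage: it simply asserts that $\varepsilon,\eta$ can be chosen so that the deterministic inequality holds on all of $[0,\theta]$, an assertion which is immediate for $p=0$ (where $|f|^p\equiv1$) but is delicate for $p>0$ for precisely the reason you identified — so your diagnosis of where the difficulty sits is sound, but your resolution of it is not.
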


\begin{proof}
Recall that under $\Qt_T$ the spine is in $N_T(t)$ for all $t\leq T$. Thus by Lemmas \ref{scaled_int_lem} and \ref{xp_lem}, since $f'(0)=0$, for any $\eta\in(0,1)$,
\begin{multline*}
-m\beta\int_0^t |\xi_s|^p \diffd s + T^{q-1}\int_0^t f'(s/T) \diffd\xi_s - \frac{1}{2}T^{2q-2}\int_0^{t} f'(s/T)^2 \diffd s\\
\leq -\eta m\beta\int_0^t |\xi_s|^p \diffd s -(1-\eta)m\beta T^{2q-2}\inf_{g\in B(f,\varepsilon)} \int_0^{t} |g(s/T)|^p \diffd s\\
+ \frac{1}{2}T^{2q-2}\int_0^{t} f'(s/T)^2 \diffd s + 2\varepsilon T^{2q-2}\int_0^{t} |f''(s/T)|\diffd s
\end{multline*}
for all $t\leq T$. Then, since $m\beta\int_0^\phi |f(s)|^p \diffd s > \frac{1}{2}\int_0^\phi f'(s)^2 \diffd s$ for all $\phi\in(0,\theta]$, we may choose $\varepsilon>0$  and $\eta>0$ small enough such that 
\begin{multline*}
-(1-\eta)m\beta T^{2q-2}\inf_{g\in B(f,\varepsilon)} \int_0^{t} |g(s/T)|^p \diffd s\\
 + \frac{1}{2}T^{2q-2}\int_0^{t} f'(s/T)^2 \diffd s + 2\varepsilon T^{2q-2}\int_0^{t} |f''(s/T)|\diffd s \leq 0
\end{multline*}
for all $t\in[0,\theta T]$. Plugging this into the spine decomposition, we get
\[\Qt_T[Z_T(t)|\Gt_T] \leq \sum_{u<\xi_t} A_u e^{\pi^2/8\varepsilon^2 T^{2q-1} - \eta  m \beta \int_0^{S_u}|\xi_s|^p \diffd s} + e^{\pi^2/8\varepsilon^2 T^{2q-1} - \eta m \beta\int_0^t|\xi_s|^p \diffd s}. \qedhere\]
\end{proof}

We are in a position now to complete one of our two initial aims, which was to show that the martingales $Z_T$ are uniformly integrable. This will be used in Section \ref{lower_bound_section} to show that $Z_T$ cannot be too small.

\begin{prop}\label{UI}
Take $f\in C^2[0,1]$ and $\theta \in [0,1].$ If $f(0)=0$, $f'(0)=0$, and $m\beta\int_0^\phi |f(s)|^p \diffd s > \frac{1}{2}\int_0^\phi f'(s)^2 \diffd s$ for all $\phi\in(0,\theta]$, then for small enough $\varepsilon>0$ the set
$\{Z_T(t) : T\geq 1, \hsl t\leq\theta T\}$ is uniformly integrable under $\Pb$.
\end{prop}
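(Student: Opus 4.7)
My strategy is to use the spine decomposition to bound the conditional mean $\tilde\Eb[Z_T(t)\mid\Gt_T]$ by a quantity whose distribution is controlled uniformly in $T$ and $t$, and then to deduce uniform tail decay of $Z_T(t)$ under $\Qt_T$ via a simple conditional Markov argument. The required UI under $\Pb$ then follows from the identity $\Eb[Z_T(t)\ind_{\{Z_T(t)>K\}}] = \Qt_T(Z_T(t)>K)$ that is implicit in the definition of $\Qt_T$.

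\textbf{Step 1 (spine domination by a universal random variable).} Set $W_T(t) := \tilde\Eb_T[Z_T(t)\mid\Gt_T]$. Since $q\ge 1$ and $T\ge 1$ imply $\pi^2/(8\varepsilon^2 T^{2q-1}) \le \pi^2/(8\varepsilon^2)$, Lemma~\ref{spine_domination} yields
\[W_T(t) \le C_0\bigl(1 + S_T(t)\bigr), \qquad S_T(t) := \sum_{u<\xi_t} A_u \exp\Bigl(-\eta m\beta\int_0^{S_u}|\xi_s|^p\,\diffd s\Bigr),\]
with $C_0 := e^{\pi^2/(8\varepsilon^2)}$ independent of $T,t$.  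By Lemma~\ref{change_of_meas_lem}, under $\Qt_T$ and conditionally on the spine's spatial path $\xi$, the branching times $\{S_u\}$ along the spine form an inhomogeneous Poisson process of intensity $(m+1)\beta|\xi_s|^p\,\diffd s$ with iid size-biased marks $A_u$ independent of $\xi$. Setting $\Lambda(s):=(m+1)\beta\int_0^s|\xi_r|^p\,\diffd r$ and $\eta':=\eta m/(m+1)$, the time change $\lambda=\Lambda(s)$ turns the branching events into a rate-$1$ marked PPP on $[0,\Lambda(T)]$; augmenting the probability space by an independent rate-$1$ PPP on $[\Lambda(T),\infty)$ carrying independent size-biased marks produces a universal random variable
\[Y := \sum_{k=1}^\infty A_k e^{-\eta'\lambda_k}\]
whose distribution does not depend on $T$ or $t$, and which dominates $S_T(t)$ almost surely.

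\textbf{Step 2 ($Y<\infty$ a.s.\ from $\Eb[A\log A]<\infty$).} The moment hypothesis is equivalent to $\tilde\Eb[\log^+ A_1]= \Eb[(A+1)\log^+ A]/(m+1)<\infty$.  For any $\delta>0$, Fubini gives $\sum_k\tilde\Pb(\log^+ A_k>\delta k)\le \tilde\Eb[\log^+ A_1]/\delta<\infty$, so by Borel--Cantelli, $\log A_k\le \delta k$ for all sufficiently large $k$ almost surely.  Combined with the SLLN $\lambda_k/k\to 1$ a.s., choosing $\delta<\eta'$ gives geometric decay of $A_k e^{-\eta'\lambda_k}$ and hence $Y<\infty$ a.s.

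\textbf{Step 3 (uniform tail bound and conclusion).} For any $K,M>0$, splitting on $\{W_T(t)\le M\}$ and applying conditional Markov to $Z_T(t)$ given $\Gt_T$,
\[\Qt_T(Z_T(t)>K) \le \Qt_T(W_T(t)>M) + \tilde\Eb_T\bigl[\ind_{\{W_T(t)\le M\}}W_T(t)/K\bigr] \le \tilde\Pb(C_0(1+Y)>M) + M/K.\]
Taking $M=\sqrt K$ sends both terms to zero as $K\to\infty$, uniformly in $(T,t)$, completing the proof.  The main obstacle I expect is the temptation to estimate $\tilde\Eb_T[\log^+ Z_T(t)]$ directly: the natural chain $\tilde\Eb_T[\log(1+Z_T(t))]\le \tilde\Eb_T[\log(1+W_T(t))]$ followed by $\log(1+\sum a_k)\le\sum\log(1+a_k)$ and a Campbell computation produces a $(\log^+ A)^2$ integrand, which requires the strictly stronger moment $\Eb[A(\log A)^2]<\infty$. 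The conditional-Markov truncation above sidesteps this entirely, needing only the almost-sure finiteness of $Y$, which is exactly what $\Eb[A\log A]<\infty$ supplies.
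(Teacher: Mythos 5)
Your proof is correct and follows essentially the same route as the paper: you bound $\Qt_T[Z_T(t)\mid\Gt_T]$ via Lemma~\ref{spine_domination} by a random variable of universal law (your time-changed Poisson construction is exactly the paper's auxiliary sequence of size-biased marks and exponential clock increments), prove its a.s.\ finiteness by Borel--Cantelli and the SLLN using $\Eb[A\log A]<\infty$, and then convert the resulting uniform $\Qt_T$-tail bound into uniform integrability under $\Pb$ through the identity $\Eb[Z_T(t)\ind_{\{Z_T(t)>K\}}]=\Qt_T(Z_T(t)>K)$. The only differences from the paper's argument are cosmetic (your truncation at $M=\sqrt{K}$ versus the paper's fixed $\delta$, $K$, $M$ bookkeeping).
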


\begin{proof}
Fix $\delta>0$. We first claim that there exists $K$ such that
\[\sup_{\substack{T\geq1\\t\leq\theta T}}\Qt_T(\Qt_T[Z_T(t)|\Gt_T] > K) < \delta/2.\]
To see this, take an auxiliary probability space with probability measure
$Q$, and on this space consider a sequence $A_1, A_2, \ldots$ of random
variables with the same (size-biased) distribution as $A$ under $\Qt_T$
(there is no dependence on $T$) and a sequence $e_1, e_2, \ldots$ of random
variables that are exponentially distributed with parameter $\beta(m+1)$;
then set $\mathcal{S}_n = e_1 + \cdots + e_n$ (so that the random variable $\mathcal{S}_n$ has
the same distribution as $\int_0^{S_u} |\xi_s|^p \diffd s$, where $S_u$ is
the time of the $n$th splitting event along the spine under $\Qt_T$). By Lemma~\ref{spine_domination} we have (since $2q-1 \geq 1$)
\[\sup_{\substack{T\geq1\\t\in[1,\theta T]}}\Qt_T(\Qt_T[Z_T(t)|\Gt_T] > K) \leq Q\left(\sum_{j=1}^\infty A_j e^{\pi^2/8\varepsilon^2 - \eta \mathcal{S}_j} + e^{\pi^2/8\varepsilon^2} > K\right).\]
Hence our claim holds if the random variable
\[\sum_{j=1}^\infty A_j e^{- \eta \mathcal{S}_j}\]
can be shown to be $Q$-almost surely finite. Now for any $\gamma\in(0,1)$,
\begin{align*}
Q\left(\sum_n A_n e^{-\eta \mathcal{S}_n} = \infty\right) &\leq Q(A_n e^{-\eta \mathcal{S}_n} > \gamma^n \hbox{ infinitely often})\\
&\leq Q\left(\frac{\log A_n}{n} > \log \gamma + \frac{\eta \mathcal{S}_n}{n} \hbox{ infinitely often}\right).
\end{align*}
By the strong law of large numbers, $\mathcal{S}_n/n \to 1/\beta(m+1)$ almost surely under $Q$; so if $\gamma\in(\exp(-\eta/\beta(m+1)),1)$ then the quantity above is no larger than
\[Q\left(\limsup_{n\to\infty} \frac{\log A_n}{n} > 0\right).\]
But this quantity is zero by Borel-Cantelli: for any $T$,
\begin{align*}
\sum_n Q\left(\frac{\log A_n}{n}>\varepsilon\right) &= \sum_n \Qt_T(\log A > \varepsilon n )\\
&\leq \int_0^\infty \Qt_T(\log A \geq \varepsilon x) dx\\
&= \Qt_T\left[\frac{\log A}{\varepsilon}\right],
\end{align*}
which is finite for any $\varepsilon>0$ since (by direct calculation from the distribution of $A$ under $\Qt_T$ given in Lemma \ref{change_of_meas_lem}) $\Qt_T[\log A] = \Pt[A\log A]<\infty$. Thus our claim holds.

Now choose $M>0$ such that $1/M < \delta/2$; then for $K$ chosen as above, and any $T\geq1$, $t\leq\theta T$,
\begin{align*}
\Qt_T(Z_T(t) > MK) &\leq \Qt_T( Z_T(t) > MK, \hsl \Qt_T[Z_T(t)|\Gt_T]\leq K )\\
&\hspace{45mm} + \Qt_T(\Qt_T[Z_T(t)|\Gt_T] > K )\\
&\leq \Qt_T\left[\frac{Z_T(t)}{MK}\ind_{\{\Qt_T[Z_T(t)|\Gt_T] \leq K\}}\right] + \delta/2\\
&= \Qt_T\left[\frac{\Qt_T[Z_T(t)|\Gt_T]}{MK}\ind_{\{\Qt_T[Z_T(t)|\Gt_T] \leq K\}}\right] + \delta/2\\
&\leq 1/M + \delta/2 \leq \delta.
\end{align*}
Thus, setting $K'=MK$, for any $T\geq1$, $t\leq\theta T$,
\[\Pb[Z_T(t)\ind_{\{Z_T(t)>K'\}}] = \Qt_T(Z_T(t)>K') \leq \delta.\]
Since $\delta>0$ was arbitrary, the proof is complete.
\end{proof}

Finally we show that $Z_T$ is close to (\ref{almost_mg}).

\begin{lem}\label{int_by_parts_lem}
For any $\delta>0$, if $f\in C^2[0,1]$, $f(0)=0$ and $\varepsilon$ is small enough then
\[Z_T(\theta T) \leq |N_T(f,\varepsilon,\theta)|e^{\frac{\pi^2\theta}{8\varepsilon^2 T^{2q}} - m\beta T^{2q-1}\int_0^\theta |f(\sigma)|^p \diffd \sigma + \frac{1}{2}T^{2q-1}\int_0^\theta f'(\sigma)^2 \diffd \sigma + \delta T^{2q-1}}.\]
\end{lem}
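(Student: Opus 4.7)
The plan is to upper-bound every summand of
\[Z_T(\theta T) = \sum_{u\in N_T(f,\varepsilon,\theta)} V_T^{(u)}(\theta T)\,e^{-m\beta\int_0^{\theta T} |X_u(s)|^p\,\diffd s}\]
by one and the same deterministic quantity, so that summing just multiplies that quantity by $|N_T(f,\varepsilon,\theta)|$, as required. First, I would drop the cosine factor via $C_T(X_u(\theta T),\theta T) \leq 1$. What is left in the exponent is the sum of four pieces: $\pi^2\theta T/(8\varepsilon^2 T^{2q})$, the stochastic integral, the deterministic quadratic correction, and the branching-rate integral.

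The first piece equals $\pi^2\theta/(8\varepsilon^2 T^{2q-1})$ and is simply kept as it is (this matches the corresponding factor in the statement; the exponent $T^{2q}$ printed in the denominator appears to be a minor typo for $T^{2q-1}$). For the middle two pieces I would write
\[T^{q-1}\!\!\int_0^{\theta T}\!\!f'(s/T)\,\diffd X_u(s) - \tfrac12 T^{2q-2}\!\!\int_0^{\theta T}\!\!f'(s/T)^2\,\diffd s = \bigl[\,\cdots\bigr] + \tfrac12 T^{2q-2}\!\!\int_0^{\theta T}\!\!f'(s/T)^2\,\diffd s,\]
where $[\,\cdots\,]$ is the expression controlled by Lemma~\ref{scaled_int_lem}. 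That lemma bounds it by $2\varepsilon T^{2q-2}\int_0^{\theta T}|f''(s/T)|\,\diffd s + \varepsilon T^{2q-1}|f'(0)|$, which after the substitution $\sigma = s/T$ is $\varepsilon T^{2q-1}\bigl(2\int_0^\theta |f''(\sigma)|\,\diffd\sigma + |f'(0)|\bigr)$, an $O(\varepsilon T^{2q-1})$ quantity since $f \in C^2[0,1]$; the left-over half-integral, after the same substitution, equals the target $\tfrac12 T^{2q-1}\int_0^\theta f'(\sigma)^2\,\diffd\sigma$.

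For the branching-rate integral, Lemma~\ref{xp_lem} gives $-m\beta\int_0^{\theta T}|X_u(s)|^p\,\diffd s \leq -m\beta T^{2q-1}\inf_{g\in B(f,\varepsilon)}\int_0^\theta |g(\sigma)|^p\,\diffd\sigma$. The step needing the most care is then showing
\[\inf_{g\in B(f,\varepsilon)}\int_0^\theta |g(\sigma)|^p\,\diffd\sigma \geq \int_0^\theta |f(\sigma)|^p\,\diffd\sigma - \omega_f(\varepsilon),\]
with $\omega_f(\varepsilon)\downarrow 0$ as $\varepsilon\downarrow 0$. For $p\in[1,2)$ this follows from the Lipschitz estimate $\bigl||a|^p-|b|^p\bigr| \leq p(|a|\vee|b|)^{p-1}|a-b|$ applied on the bounded range $[0,\|f\|_\infty+\varepsilon]$; for $p\in[0,1)$ it follows from the sub-additivity inequality $\bigl||a|^p-|b|^p\bigr| \leq |a-b|^p$, yielding $\omega_f(\varepsilon)\leq \theta\varepsilon^p$.

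Collecting the three bounds, each summand is at most the exponential of the right-hand side of the statement with total error $\eta(\varepsilon)T^{2q-1}$, where $\eta(\varepsilon) = \varepsilon\bigl(2\int_0^\theta|f''|+|f'(0)|\bigr) + m\beta\,\omega_f(\varepsilon) \to 0$ as $\varepsilon\to 0$. Choosing $\varepsilon$ small enough that $\eta(\varepsilon) < \delta$ finishes the proof. I do not expect any substantive obstacle here; the argument is careful bookkeeping of error terms, and the only mildly delicate ingredient is the $L^1$-continuity of $g\mapsto\int|g|^p\,\diffd\sigma$ on $B(f,\varepsilon)$.
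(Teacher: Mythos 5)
Your proposal is correct and follows essentially the same route as the paper, whose proof consists precisely of plugging Lemma~\ref{scaled_int_lem} (for the Girsanov part) and Lemma~\ref{xp_lem} (for the branching-rate integral) into the definition of $Z_T(\theta T)$, after bounding the cosine factor by $1$. Your explicit treatment of the continuity of $g\mapsto\int_0^\theta|g(\sigma)|^p\,\diffd\sigma$ on $B(f,\varepsilon)$ and your remark on the power of $T$ in the $\pi^2$ term only fill in details the paper leaves implicit.
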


\begin{proof}
Simply plugging the results of Lemmas \ref{scaled_int_lem} and \ref{xp_lem} into the definition of $Z_T(\theta T)$ gives the desired inequality.
\end{proof}

We note here that, in fact, a similar bound can easily be given in the opposite direction, so that $|N_T(f,\varepsilon/2,\theta)|$ is dominated by $Z^T(\theta T)$ multiplied by some deterministic function of $T$. We will not need this bound, but it is interesting to note that the study of the martingales $Z_T$ is in a sense equivalent to the study of the number of particles $N_T$.

\section{Proof of Theorem \ref{expected_growth_paths_thm}}\label{expected_growth_sec}
We first rule out the possibility of any particles following unusual paths, which allows us to restrict our attention to compact sets, and hence small balls about sensible paths. In this section we go back to our original notation convention in which small letters such as $s$ and $t$ as well as $\theta$ are used for scaled time parameters varying in $[0,1]$ and capital letters are reserved for the non-scaled time.

\begin{lem}\label{extreme_lem}
Fix $\theta\in[0,1]$. For $N\in\mathbb{N}$, let
\[F_N:=\left\{f\in C[0,1] : \exists n\geq N, \hsl s,t\in[0,\theta] \hbox{ with } |t-s|\leq \frac{1}{n^2}, \hsl |f(t)-f(s)|>\frac{1}{\sqrt n}\right\}.\]
For any $\eta>0$ we may choose $N\in\mathbb{N}$ such that for all large $T$
\[\P(N_T(F_N,\theta)\neq\emptyset) \leq \E[N_T(F_N,\theta)] \leq \exp(-\eta T^{2q-1})\]
and
\[\limsup_{T\to\infty}\frac{1}{T^{2q-1}}\log |N_T(F_N,\theta)| = -\infty\]
almost surely.
\end{lem}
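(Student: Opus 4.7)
The first inequality in the display is just Markov's on the integer-valued $|N_T(F_N,\theta)|$, so the core task is to show that $\Eb|N_T(F_N,\theta)|\le\exp(-\eta T^{2q-1})$ for any $\eta>0$ once $N$ is taken large enough. By the many-to-one lemma,
\[
\Eb|N_T(F_N,\theta)| = \tilde\Eb\!\left[e^{m\beta\int_0^{\theta T}|\xi_s|^p\diffd s}\ind_{\{\xi\in B_{N,T}\}}\right],
\]
where $\xi$ is a standard Brownian motion under $\Pt$ and $B_{N,T}$ is the event that the rescaled path $s\mapsto T^{-q}\xi_{sT}$ on $[0,\theta]$ lies in $F_N$.

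The natural attack is Cauchy--Schwarz, separating the Feynman--Kac weight from the Brownian modulus deviation:
\[
\Eb|N_T(F_N,\theta)| \leq \tilde\Eb\!\left[e^{2m\beta\int_0^{\theta T}|\xi_s|^p\diffd s}\right]^{1/2}\cdot\Pt(B_{N,T})^{1/2}.
\]
For the probability factor, fix $n\ge N$ and cover $[0,\theta T]$ by $O(n^2)$ overlapping intervals of length $2T/n^2$, arranged so that any two points within distance $T/n^2$ lie in a common interval; if $B_{N,T}$ occurs then one such interval must have Brownian oscillation exceeding $T^q/\sqrt n$, an event whose standard Gaussian tail bound on the running maximum gives probability at most $C\exp(-cnT^{2q-1})$. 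Union bounding over the $O(n^2)$ intervals and summing over $n\ge N$ yields $\Pt(B_{N,T})\le C'\exp(-cNT^{2q-1})$ for $T$ large. For the Feynman--Kac factor, I would prove the a priori bound $\tilde\Eb[e^{2m\beta\int_0^{\theta T}|\xi_s|^p\diffd s}]\le\exp(C_1T^{2q-1})$ by decomposing over $M_T:=\sup_{s\le\theta T}|\xi_s|$:
\[
\tilde\Eb\!\left[e^{2m\beta\int_0^{\theta T}|\xi_s|^p\diffd s}\right]\le \sum_{k\ge 1}e^{2m\beta\theta T k^p}\Pt(M_T\in[k-1,k))\le 2\sum_{k\ge 1}\exp\!\left(2m\beta\theta T k^p-\tfrac{(k-1)^2}{2\theta T}\right),
\]
and observing that the exponent is maximised at $k\asymp T^q$, where it equals a positive constant times $T^{2q-1}=T^{(2+p)/(2-p)}$ (the shoulders of the sum only contribute polynomial factors). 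Combining the two inputs, $\Eb|N_T(F_N,\theta)|\le\exp((C_1-cN)T^{2q-1}/2)$, which falls below $\exp(-\eta T^{2q-1})$ as soon as $N>(2\eta+C_1)/c$.

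For the almost sure statement, the probability bound along integer times $T=j$ is summable, so Borel--Cantelli gives $N_j(F_N,\theta)=\emptyset$ for all large integer $j$ almost surely. Extension to real $T\to\infty$ is a routine monotonicity argument: a particle $u$ witnessing $F_N$ at some $T\in[j,j+1]$ also witnesses membership of a slightly enlarged class $F_{N'}$ at time $j+1$ (with $N'\to\infty$ as $N\to\infty$ for large $j$), whose first moment still decays on the same exponential scale. The main obstacle is the a priori Feynman--Kac estimate, morally a weak form of Theorem~\ref{unconstrained_thm} but here proved elementarily by the sum above; beyond that, the argument is Schilder-type bookkeeping. One should keep in mind throughout that both sides of Cauchy--Schwarz live on the exponential scale $T^{2q-1}$, so constants must be tracked carefully and $N$ ultimately taken very large in terms of both $\eta$ and the Feynman--Kac constant $C_1$.
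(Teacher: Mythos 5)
Your fixed-$T$ expectation bound is essentially sound, and it takes a genuinely different route from the paper: you separate the Feynman--Kac weight from the path event by Cauchy--Schwarz and prove an a priori bound $\tilde\Eb[e^{2m\beta\int_0^{\theta T}|\xi_s|^p \diffd s}]\le e^{C_1T^{2q-1}}$ by shelling over the running maximum, while the paper instead truncates on $\{\sup_{S\le j+1}|\xi_S|\le M(j+1)^q\}$, uses the maximum to bound the weight on that event against the (tiny) oscillation probability, and treats the complement by the same shell decomposition. Both work for the first display; your version is arguably tidier, at the cost of squaring the exponent (harmless, since $N$ is free). Your covering/oscillation estimate for $\Pt(B_{N,T})$ and the exponent bookkeeping ($pq+1=2q-1$, maximiser $k\asymp T^q$) are correct.

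The genuine gap is in the passage from integer times to continuous time, which you dismiss as routine monotonicity. The specific mechanism you propose fails: a particle $u$ witnessing $F_N$ at horizon $T\in[j,j+1]$ via some $n\ge N$ and times $s,t$ gives, after reparametrising to horizon $j+1$ (i.e.\ $s'=sT/(j+1)$, $t'=tT/(j+1)$), a displacement exceeding only $T^q/\sqrt n=(j+1)^q\big(T/(j+1)\big)^q/\sqrt n$, which is strictly below the threshold $(j+1)^q/\sqrt n$ required for membership of \emph{any} $F_{N'}$ with the same $n$; and replacing $n$ by a larger $n''$ to lower the threshold forces $n''\ge n\big((j+1)/T\big)^{2q}$, which is incompatible with the time-increment constraint $|t'-s'|\le 1/n''^2$ (since $|t'-s'|$ can be as large as $T/((j+1)n^2)$ and $2q>1/2$). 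So no choice of index $N'$ repairs the transfer: the enlargement must be in the oscillation \emph{threshold} (replace $1/\sqrt n$ by $(1-\epsilon_j)/\sqrt n$ with $\epsilon_j\to0$, and re-prove the first-moment bound for that relaxed class), or --- as the paper does --- one must build the slack in from the start by proving the spine estimate for the event $\{\exists\, U\in[j,j+1]:\xi_U\in N_U(F_N,\theta)\}$ with thresholds measured at scale $j^q$ rather than $T^q$, and then bound $\Eb\big[\sup_{T\in[j,j+1]}|N_T(F_N,\theta)|\big]$ directly. A second, smaller omission in the same step: the particle alive at time $\theta T$ need not be alive at time $\theta(j+1)$, so one must pass to a descendant (which exists and shares the historical path), exactly as the paper does by summing indicators over $u\in N(j+1)$ of the ancestral event. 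With these repairs your outline goes through, but as written the bridging step is not correct.
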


\begin{proof}
Fix $T\geq S\geq0$; then for any $U\in[S,T]$,
\begin{align*}
\{\xi_U\in N_U(F_N,\theta)\} &= \left\{\exists n\geq N,\hsl s,t\in[0,\theta] : |t-s|\leq \frac{1}{n^2},\hsl \left|\frac{\xi_{sU} - \xi_{tU}}{U^q}\right| > \frac{1}{\sqrt n}\right\}\\
&\subset \left\{\exists n\geq N,\hsl s,t\in[0,\theta] : |t-s|\leq \frac{1}{n^2},\hsl \left|\frac{\xi_{sT} - \xi_{tT}}{S^q}\right| > \frac{1}{\sqrt n}\right\}.
\end{align*}
Since the right-hand side does not depend on $U$, we deduce that
\begin{multline*}
\{\exists U\in[S,T] : \xi_U\in N_U(F_N,\theta)\} \\ \subset \left\{\exists n\geq N,\hsl s,t \in[0,\theta] : |t-s|\leq \frac{1}{n^2},\hsl \left|\frac{\xi_{sT} - \xi_{tT}}{S^q}\right| > \frac{1}{\sqrt n}\right\}.\end{multline*}
Now, for $s\in[0,\theta]$, define $\pi(n,s):=\lfloor 2n^2 s \rfloor
/(2n^2)$. Suppose we have a continuous function $f$ such that
\mbox{$\sup_{s\in[0,\theta]}|f(s) - f(\pi(n,s))|\leq 1/(4\sqrt n)$}. If $s,t\in[0,\theta]$ satisfy $|t-s|\leq 1/n^2$, then
\begin{align*}
&|f(t) - f(s)|\\
&\leq |f(t) - f(\pi(n,t))| + |f(s) - f(\pi(n,s))| + |f(\pi(n,s)) - f(\pi(n,t))|\\
&\leq \frac{1}{4\sqrt n} + \frac{1}{4\sqrt n} + \frac{2}{4\sqrt n} = \frac{1}{\sqrt n}.
\end{align*}
Thus
\[\{\exists U\in[S,T] : \xi_U \in N_U(F_N,\theta)\}\subset\left\{\exists n\geq N,\hsl s\leq\theta : \left|\frac{\xi_{sT}-\xi_{\pi(n,s)T}}{S^q}\right|>\frac{1}{4\sqrt n}\right\}.\]
Standard properties of Brownian motion now give us that
\begin{align*}
\Pb(\exists U\in[S,T] : \xi_U\in N_U(F_N,\theta)) &\leq \Pb\left(\exists n\geq N,\hsl s\leq\theta : |\xi_{sT}-\xi_{\pi(n,s)T}|>S^q/4\sqrt n\right)\\
&\leq \sum_{n\geq N} 2n^2
\Pb\left(\sup_{s\in[0,1/(2n^2)]}\left|\xi_{sT}\right|> S^q/4\sqrt n \right)\\
&\leq \sum_{n\geq N} \frac{8\sqrt{n^3 T}}{S^q\sqrt{\pi}}\exp\left(-\frac{S^{2q} n}{16T}\right).
\end{align*}
Taking $S=j$ and $T=j+1$, we note that for large $N$,
\[\sum_{n\geq N} \frac{8\sqrt{n^3 T}}{S\sqrt{\pi}}\exp\left(-\frac{S^{2q} n}{16T}\right) \leq \sum_{n\geq N} \exp\left(-\frac{j^{2q-1} n}{32}\right).\]
Now, for any $M>0$,
\begin{align*}
&\Pb\left(\sup_{T\in[j,j+1]} |N_T(F_N,\theta)|\geq 1\right) \leq \Eb\left[\sup_{T\in[j,j+1]} |N_T(F_N,\theta)|\right]\\
&\leq \Eb\left[\sum_{u\in N(j+1)}\ind_{\{\exists T \in[j,j+1] : u\in N_T(F_N,\theta)\}}\right] = \Eb\left[e^{m\beta\int_0^{j+1} |\xi_s|^p \diffd s} \ind_{\{\exists T \in[j,j+1] : \xi_T\in N_T(F_N,\theta)\}}\right]\\
&\leq \Eb\left[e^{m\beta\int_0^{j+1} |\xi_s|^p \diffd s} \ind_{\{\exists T \in[j,j+1] : \xi_T\in N_t(F_N,\theta)\}}\ind_{\{\sup_{S\leq j+1}|\xi_S|\leq M(j+1)^q
\}}\right]\\
&\hspace{12mm} + \Eb\left[e^{m\beta\int_0^{j+1} |\xi_s|^p \diffd s} \ind_{\{\sup_{S\leq j+1}|\xi_S| > M(j+1)^q\}}\right]\\
&\leq e^{m\beta M^p (j+1)^{pq+1}} \Pb(\exists T\in[j,j+1] : \xi_T \in N_T(F_N,\theta))\\
&\hspace{12mm} + \sum_{k\geq 1} \Eb\left[e^{m\beta\int_0^{j+1} |\xi_s|^p \diffd s} \ind_{\{\sup_{S\leq j+1}|\xi_T|\in[kM(j+1)^q,(k+1)M(j+1)^q]\}}\right]\\
&\leq e^{m\beta M^p (j+1)^{pq+1}} \Pb(\exists T\in[j,j+1] : \xi_T \in N_T(F_N,\theta))\\
&\hspace{12mm} + \sum_{k\geq 1} e^{m\beta (j+1)^{2q-1}(k+1)^p M^p} \Pb(\sup_{S\leq j+1}|\xi_S| \in [kM(j+1)^q, (k+1)M(j+1)^q])\\
&\leq e^{m\beta M^p (j+1)^{pq+1}} \Pb(\exists T\in[j,j+1] : \xi_T \in N_T(F_N,\theta))\\
&\hspace{12mm} + 4\sum_{k\geq 1} \frac{1}{\sqrt{2\pi}(j+1)}e^{m\beta (j+1)^{2q-1}(k+1)^p M^p - k^2M^2 (j+1)^{2q-1}/2}.
\end{align*}
Both of the terms in the right-hand side can be made exponentially small in $j$ by
choosing $M$, and then $N$, sufficiently large (for the first, see our calculations earlier in the proof). This establishes the first part of the lemma, and by Borel-Cantelli we have that for large enough $N$
\[\Pb(\limsup_{j\to\infty} \sup_{T\in[j,j+1]} |N_T(F_N,\theta)| \geq 1)=0\]
and since $|N_T(F_N,\theta)|$ is integer-valued,
\[\limsup_{T\to\infty}\frac{1}{T^{2q-1}}\log|N_T(F_N,\theta)| = -\infty\]
almost surely.
\end{proof}

We now check that we can cover our sets in a suitable way.

\begin{lem}\label{totally_bdd_lem}
Let
\[C_0[0,1]:=\{f\in C[0,1] : f(0)=0\}.\]
For each $N\in\mathbb{N}$, the set $C_0[0,1]\setminus F_N$ is totally
bounded under $\|\cdot\|_\infty$ (that is, it may be covered by finitely
many open balls of arbitrarily small radius).
\end{lem}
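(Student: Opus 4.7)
The plan is to invoke the Arzel\`a--Ascoli theorem. The defining condition for $F_N$ gives, for every $f \in C_0[0,1]\setminus F_N$, every $n\ge N$, and every $s,t\in[0,\theta]$ with $|t-s|\le 1/n^2$, the bound $|f(t)-f(s)|\le 1/\sqrt n$. Given $\eta>0$, choosing $n=\max(N,\lceil 1/\eta^2\rceil)$ and $\delta=1/n^2$ yields $|f(t)-f(s)|\le\eta$ whenever $|t-s|\le\delta$ with $s,t\in[0,\theta]$, uniformly in $f$; this is equicontinuity of the family on $[0,\theta]$.

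For uniform boundedness on $[0,\theta]$, I use $f(0)=0$ together with the modulus at $n=N$: any $s\in[0,\theta]$ can be reached from $0$ by at most $\lceil N^2\theta\rceil+1$ steps of length $\le 1/N^2$, each contributing an increment of at most $1/\sqrt N$, so $|f(s)|\le(\lceil N^2\theta\rceil+1)/\sqrt N$ on $[0,\theta]$. By Arzel\`a--Ascoli, the family of restrictions $\{f|_{[0,\theta]}:f\in C_0[0,1]\setminus F_N\}$ is relatively compact in $C[0,\theta]$ with the sup norm, hence totally bounded: for every $\varepsilon>0$ there exist finitely many $g_1,\dots,g_k\in C[0,\theta]$ such that every such $f|_{[0,\theta]}$ lies within $\varepsilon$ of some $g_j$ in $C[0,\theta]$.

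To lift this to $C_0[0,1]\setminus F_N\subset C[0,1]$ in the sup norm on $[0,1]$, I extend each $g_j$ to $\tilde g_j\in C[0,1]$ by the constant value $g_j(\theta)$ on $(\theta,1]$. The balls $B(\tilde g_j,\varepsilon)\subset C[0,1]$ cover $C_0[0,1]\setminus F_N$ in the only sense used by Lemma~\ref{extreme_lem}, whose covering arguments evaluate membership of $X_u$ in $B(f,\varepsilon)$ only for times $s\in[0,\theta T]$; the behaviour of any $f\in C_0[0,1]\setminus F_N$ on $(\theta,1]$ is irrelevant for those applications, and I regard this as implicit in the intended statement. The substantive step is the Arzel\`a--Ascoli verification, which is immediate once equicontinuity and uniform boundedness are in hand; there is no genuine obstacle in the argument.
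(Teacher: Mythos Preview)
Your proof is correct and follows the same route as the paper's: equicontinuity on $[0,\theta]$ from the defining modulus condition of $F_N$, uniform boundedness from $f(0)=0$, then Arzel\`a--Ascoli. You are more explicit than the paper about the $[0,\theta]$ versus $[0,1]$ discrepancy---the paper's own proof establishes equicontinuity only on $[0,\theta]$ and then asserts relative compactness in $C_0[0,1]$, and the lemma is in fact only ever applied through the seminorm $\|\cdot\|_\theta$, so your caveat is well placed (though the relevant applications are in the proofs of Theorem~\ref{expected_growth_paths_thm} and Proposition~\ref{upper_bd_J} rather than Lemma~\ref{extreme_lem}).
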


\begin{proof}
Given $\varepsilon>0$, choose $n$ such that \mbox{$n\geq N\vee 1/\varepsilon^2$}. For any \mbox{$f\in C_0[0,1]\setminus F_N$}, if \mbox{$|u-s|<1/n^2$} then \mbox{$|f(u)-f(s)|\leq 1/\sqrt n \leq \varepsilon$}. Thus \mbox{$C_0[0,\theta]\setminus F_N$} is equicontinuous (and, since each function must start from 0, uniformly bounded) and we may apply the Arzel\`a-Ascoli theorem to say that $C_0[0,1]\setminus F_N$ is relatively compact, which is equivalent to totally bounded since $(C[0,1], \|\cdot\|_\infty)$ is a complete metric space.
\end{proof}

\begin{lem}\label{semicont_lem}
For $D\subset C_0[0,1]$ define
\[D^\varepsilon := \{f\in C_0[0,1] : \exists g\in D \hbox{ with } \|g-f\|_\infty\leq\varepsilon\}.\]
For any $\delta>0$ there exists $\varepsilon>0$ such that
\[\sup_{f\in D^\varepsilon} K(f,\theta) \leq \sup_{f\in D} K(f,\theta) + \delta.\]
\end{lem}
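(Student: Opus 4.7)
The plan is a contradiction argument exploiting the fact that, while $K(\cdot, \theta)$ is not upper semicontinuous in sup-norm on all of $C_0[0,1]$, it \emph{is} USC once one restricts to families of uniformly bounded $H_1$-norm; a well-chosen \emph{a priori} estimate then turns the hypothesis of the lemma into exactly such a uniform bound. Throughout I assume $D$ is closed in $C_0[0,1]$ (this is the only case relevant to the applications, and is needed to pin down the limit obtained by compactness; without it one can cook up counterexamples by placing $D$ on a sequence of functions converging to a non-$H_1$ target).

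\textbf{Step 1 (universal upper bound on $K$).} For any $f \in H_1$, Cauchy--Schwarz together with $f(0) = 0$ gives $\|f\|_\infty \leq \|f'\|_{L^2}$, hence
\[\|f\|_\infty^2 \leq \|f'\|_{L^2}^2 = 2\int_0^\theta m\beta|f(s)|^p\, \diffd s - 2K(f,\theta) \leq 2m\beta\theta\, \|f\|_\infty^p - 2K(f,\theta).\]
Since $p < 2$, any $f$ with $K(f,\theta) \geq 0$ satisfies $\|f\|_\infty \leq C_1 := (2m\beta\theta)^{1/(2-p)}$, and consequently $K(f,\theta) \leq C_0 := m\beta\theta\, C_1^p$. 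In particular $S := \sup_{f \in D} K(f,\theta) \leq C_0 < \infty$.

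\textbf{Step 2 (compactness).} Suppose for contradiction that some $\delta > 0$ admits $\sup_{D^{1/n}} K(\cdot,\theta) > S + \delta$ for every $n \in \N$. Choose $f_n \in D^{1/n}$ with $K(f_n,\theta) > S + \delta$ and $g_n \in D$ with $\|f_n - g_n\|_\infty \leq 1/n$. Reading the calculation of Step 1 from the lower bound $K(f_n,\theta) > S + \delta$ rather than from $K\ge 0$ uniformly bounds $\|f_n\|_\infty$, and then $\|f_n'\|_{L^2}^2 = 2\int_0^\theta m\beta|f_n|^p\, \diffd s - 2K(f_n,\theta)$ is uniformly bounded as well, so $\{f_n\}$ is bounded in $H_1$. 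By Arzel\`a--Ascoli (applied to the resulting uniformly $1/2$-H\"older family, equivalently the compact embedding $H_1 \hookrightarrow C[0,\theta]$), a subsequence converges uniformly to some $f \in H_1$ and weakly in $H_1$. Then $g_n \to f$ uniformly, and closedness of $D$ forces $f \in D$, so $K(f,\theta) \leq S$.

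\textbf{Step 3 (USC on the bounded ball, and conclusion).} The potential $P(h) := m\beta\int_0^\theta |h(s)|^p\, \diffd s$ is continuous under uniform convergence on sup-norm-bounded families (dominated convergence, $|\cdot|^p$ continuous), so $P(f_n) \to P(f)$. The kinetic $E(h) := \tfrac12\|h'\|_{L^2}^2$ is convex on $H_1$, hence weakly lower semicontinuous, so $E(f) \leq \liminf_n E(f_n)$. Combining, $\limsup_n K(f_n,\theta) \leq P(f) - E(f) = K(f,\theta)$, contradicting $K(f_n,\theta) > S + \delta$ and $K(f,\theta) \leq S$. The variant $S = -\infty$ is handled by the same argument, replacing $S + \delta$ by an arbitrary finite $N$ and concluding $\sup_{D^\varepsilon} K(\cdot,\theta) \to -\infty$ as $\varepsilon \to 0$.

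The main obstacle is that $K$ is \emph{not} USC in sup-norm: highly oscillatory perturbations within an arbitrarily small ball blow the kinetic term $-E$ up to $-\infty$, and $D^\varepsilon$ contains many such perturbations of each $g \in D$. What rescues the argument is the a priori bound of Step 1, which crucially uses $p<2$ to convert the assumed lower bound on $K(f_n,\theta)$ into a uniform $H_1$ bound on $\{f_n\}$; after that the standard compactness / weak LSC machinery applies in a routine way.
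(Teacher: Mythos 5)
Your machinery is essentially the paper's, reorganised sequentially: the a priori bound exploiting $p<2$ (a lower bound on $K(f,\theta)$ forces $\sup_{[0,\theta]}|f|$ and $\int_0^\theta f'(s)^2\,\diffd s$ to be bounded), Arzel\`a--Ascoli compactness, and upper semicontinuity of $K(\cdot,\theta)$ via lower semicontinuity of the kinetic term. The paper packages this non-sequentially: the superlevel set $\{h : K(h,\theta)\ge \sup_D K(\cdot,\theta)+\delta\}$ is totally bounded by the same energy estimate, closed by upper semicontinuity, hence compact, and disjoint from the set of restrictions of $D$ to $[0,\theta]$; the $\varepsilon$ of the lemma is then a positive distance between a compact set and a (claimed) closed set. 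Your Steps 1 and 3 are a faithful sequential translation of that.

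The gap is in Step 2, at ``$g_n\to f$ uniformly, and closedness of $D$ forces $f\in D$''. Everything you extract from $K(f_n,\theta)>S+\delta$ controls $f_n$ only on $[0,\theta]$ (the functional $K(\cdot,\theta)$ does not see the path after time $\theta$), so Arzel\`a--Ascoli gives uniform convergence of a subsequence on $[0,\theta]$ only; on $[\theta,1]$ the functions $g_n\in D$ are completely uncontrolled and need not converge at all. Hence for $\theta<1$ you produce no limit point of $D$ inside $C_0[0,1]$, the conclusion $K(f,\theta)\le S$ is unjustified, and no contradiction is reached; your argument is complete only in the case $\theta=1$. You have also quietly strengthened the hypothesis by assuming $D$ closed; your instinct that something of this kind is needed is sound (the point is genuinely delicate, and for a counterexample one needs a nice $H_1$ path of large $K$ approached by high-energy elements of $D$, not a non-$H_1$ target, whose $K$ is $-\infty$), but closedness of $D$ in $C_0[0,1]$ neither implies nor is implied by the property the argument really needs, namely that the set $\{h\in C_0[0,\theta]:\exists g\in D,\ h=g|_{[0,\theta]}\}$ of restrictions is closed --- which is exactly the assertion the paper's own proof makes at the corresponding step. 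To repair your proof, work throughout with restrictions to $[0,\theta]$ (both $K(\cdot,\theta)$ and the enlargement $D^\varepsilon$ interact only with $g|_{[0,\theta]}$ for $g\in D$) and impose or verify closedness of that restricted set, or else run the paper's distance argument between the compact superlevel set and the closure of the restricted set after checking the two are disjoint.
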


\begin{proof}
For each $\theta$, $f \mapsto \int_0^\theta f'(s)^2 \diffd s$ is a lower semicontinuous function on $C_0[0,\theta]$: we refer to Section 5.2 of \cite{dembo_zeitouni:large_devs} but it is possible to give a direct proof. Thus $f \mapsto m\beta \int_0^\theta |f(s)|^p \diffd s - \frac{1}{2}\int_0^\theta f'(s)^2 \diffd s$ is upper semicontinuous. Now, by Jensen's inequality, for any $f\in C_0[0,\theta]\cap H_1$ and any $s,t\in[0,\theta]$, $s<t$,
\[\frac{1}{t-s}\int_s^t f'(u)^2 \diffd u \geq \left(\frac{1}{t-s}\int_s^t
f'(u) \diffd u\right)^2 = \left(\frac{f(t)-f(s)}{t-s}\right)^2\]
so that
\begin{equation}\label{jensen}
(f(t)-f(s))^2 \leq (t-s)\int_s^t f'(u)^2 \diffd u.
\end{equation}
There exists $t\in[0,\theta]$ such that $|f(t)|^p \geq \frac{1}{\theta}\int_0^\theta |f(s)|^p \diffd s$, so by (\ref{jensen}) (taking $s=0$)
\[\int_0^\theta f'(u)^2 \diffd u \geq \int_0^t f'(u)^2 \diffd u \geq \frac{\left(\int_0^\theta |f(s)|^p \diffd s \right)^{2/p}}{\theta^{2/p} t} \geq \left(\int_0^\theta |f(s)|^p \diffd s \right)^{2/p}\]
and hence
\begin{multline*}
\{f\in C_0[0,\theta] : m\beta \int_0^\theta |f(s)|^p \diffd s - \frac{1}{2}\int_0^\theta f'(s)^2 \diffd s \geq K\}\\
\subset \{f\in C_0[0,\theta] : m\beta \left(\int_0^\theta f'(s)^2 \diffd s\right)^{p/2} - \frac{1}{2}\int_0^\theta f'(s)^2 \diffd s \geq K\}\\
\subset \{f\in C_0[0,\theta] : \int_0^\theta f'(s)^2 \diffd s \leq K'\}
\end{multline*}
for some $K'$ since $p/2<1$. But by (\ref{jensen}),
\begin{multline*}
\{f\in C_0[0,\theta] : \int_0^\theta f'(s)^2 \diffd s \leq K'\}\\
\subset \{f\in C_0[0,\theta] : \forall s,t \in[0,\theta], |f(s)-f(t)|\leq \sqrt{(t-s)K'} \}
\end{multline*}
and the Arzel\`a-Ascoli theorem tells us that this latter set is totally bounded. Thus the set
\[\{f\in C_0[0,\theta] : m\beta \int_0^\theta |f(s)|^p \diffd s - \frac{1}{2}\int_0^\theta f'(s)^2 \diffd s \geq \sup_{f\in D} K(f,\theta)+\delta\}\]
is totally bounded, but by upper-semicontinuity it is closed, and hence compact. Since it is disjoint from \mbox{$\{f\in C_0[0,\theta] : \exists g\in D \hbox{ with } f(s)=g(s) \hsl\forall s\in[0,\theta]\}$}, which is closed, there is a positive distance between the two sets.
\end{proof}

Before continuing, we need the following lemma.

\begin{lem}
For any $x,y\in\Rb$ and $p\in[0,2)$,
\[|x+y|^p \leq |x|^p + |y|^p + 2|x|^{p/2}|y|^{p/2}.\]
\end{lem}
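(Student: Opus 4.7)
The key observation is that the right-hand side is a perfect square:
\[
|x|^p + |y|^p + 2|x|^{p/2}|y|^{p/2} = \bigl(|x|^{p/2} + |y|^{p/2}\bigr)^2.
\]
Since both sides of the desired inequality are non-negative, it therefore suffices to establish the ``half-power'' inequality
\[
|x+y|^{p/2} \le |x|^{p/2} + |y|^{p/2},
\]
and then square.

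To prove this reduced inequality, first use the ordinary triangle inequality $|x+y| \le |x|+|y|$ together with the fact that $t \mapsto t^{p/2}$ is non-decreasing on $[0,\infty)$ (trivially if $p=0$) to obtain $|x+y|^{p/2} \le (|x|+|y|)^{p/2}$. Then invoke sub-additivity of $t \mapsto t^{p/2}$ on $[0,\infty)$: since $p \in [0,2)$ we have $r := p/2 \in [0,1)$, and for such $r$ one has $(a+b)^r \le a^r + b^r$ for all $a,b \ge 0$. This standard fact follows, for instance, by noting that when $a+b>0$, $(a/(a+b))^r + (b/(a+b))^r \ge (a/(a+b)) + (b/(a+b)) = 1$, because $u^r \ge u$ for $u\in[0,1]$ and $r\le 1$.

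Combining these two steps gives $|x+y|^{p/2} \le |x|^{p/2} + |y|^{p/2}$, and squaring yields the claim. There is no serious obstacle; the only point worth flagging is the algebraic recognition of the right-hand side as a square, after which the inequality reduces to the standard sub-additivity of the concave power $t^{p/2}$.
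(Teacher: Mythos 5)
Your proof is correct: the right-hand side is exactly $\bigl(|x|^{p/2}+|y|^{p/2}\bigr)^2$, and reducing the claim to the triangle inequality plus subadditivity of $t\mapsto t^{p/2}$ (valid since $p/2\in[0,1)$) is a complete and standard elementary argument. The paper gives no proof in the text, deferring to the cited thesis, and your argument is precisely the sort of elementary verification intended there, so nothing further is needed.
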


\noindent
This result is entirely elementary; see \cite{roberts:thesis} for a proof.


\begin{prop}\label{exp_prop}
If $f\not\in F_N$, then for $A = B(f,\varepsilon)$, we have
\[\limsup_{T\to\infty}\frac{1}{T^{2q-1}}\log\Eb\big[|N_T(\bar A,\theta)|\big] \leq \sup_{g\in \bar A} K(g,\theta) + R_N(\varepsilon)\]
and
\[\liminf_{T\to\infty}\frac{1}{T^{2q-1}}\log\Eb\big[|N_T(A,\theta)|\big] \geq \sup_{g\in A} K(g,\theta) - R_N(\varepsilon)\]
as $T\to\infty$, where
\[R_N(\varepsilon):=\left\{\begin{array}{ll} 0 & \hbox{if } p=0 \\ 2m\beta\left(\frac{N^2+1}{\sqrt{N}}+\varepsilon\right)^{p/2}(2\varepsilon)^{p/2} + (2\varepsilon)^p &\hbox{if } p>0;\end{array}\right.\]
in particular $R$ is a deterministic function of $\varepsilon$ such that for each  $N$, $R_N(\varepsilon)\to 0$ as $\varepsilon\to 0$
\end{prop}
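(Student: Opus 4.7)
The plan is to reduce the expectation to a single Brownian motion calculation via the many-to-one lemma and then apply Schilder-type large deviations after a suitable rescaling.

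First I would use the many-to-one lemma to write
\[\E|N_T(D,\theta)| = \tilde\E\bigl[e^{m\beta \int_0^{\theta T}|\xi_s|^p\, ds}\,\ind_{E_T(D)}\bigr],\]
where $E_T(D)$ is the event that the spine $\xi$ stays inside the spatial tube defining $N_T(D,\theta)$. Introducing the rescaled process $\hat B^T_s := T^{-q}\xi_{sT}$ for $s\in[0,\theta]$, which has the law of Brownian motion scaled by $T^{1/2-q}$, and using the identity $pq+1=2q-1$ (an elementary consequence of $q=2/(2-p)$), we obtain
\[\int_0^{\theta T}|\xi_s|^p\,ds \;=\; T^{2q-1}\int_0^\theta |\hat B^T_s|^p\,ds,\]
and $E_T(D)$ becomes $\{\exists g\in D:\|\hat B^T-g\|_\infty<\varepsilon\}$. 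By Schilder's theorem, $\hat B^T$ satisfies a large deviations principle at speed $T^{2q-1}$ with rate function $\tfrac12\int_0^\theta g'(s)^2\,ds$.

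Next I would extract uniform pointwise control of the integrand on $E_T(A)$ with $A=B(f,\varepsilon)$. Because $f\notin F_N$, concatenating increments across intervals of length $1/N^2$ gives $|f(s)-f(0)|\le (N^2+1)/\sqrt{N}$, while nonemptiness of the tube forces $|f(0)|\le\varepsilon$. Hence on $E_T(A)$, $\hat B^T$ lies within $2\varepsilon$ of $f$ and $\|\hat B^T\|_\infty\le (N^2+1)/\sqrt N+\varepsilon$. Applying the preceding lemma $|x+y|^p\le|x|^p+|y|^p+2|x|^{p/2}|y|^{p/2}$ with $x=f(s)$, $y=\hat B^T_s-f(s)$, and its reverse, and integrating, we obtain
\[\Bigl|\,m\beta\int_0^\theta |\hat B^T_s|^p\,ds - m\beta\int_0^\theta |f(s)|^p\,ds\,\Bigr|\le R_N(\varepsilon)\]
uniformly on $E_T(A)$, for $R_N(\varepsilon)$ of the stated form (the $p=0$ case being trivial since $|x|^0\equiv 1$).

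For the upper bound, pull the deterministic factor $\exp(T^{2q-1}(m\beta\int_0^\theta |f|^p+R_N(\varepsilon)))$ out of the expectation and apply Schilder's upper bound to $\tilde\P(\hat B^T\in\bar A|_{[0,\theta]})$; combining with the easy estimate that $m\beta\int|f|^p$ differs from $\sup_{g\in\bar A}m\beta\int|g|^p$ by at most $R_N(\varepsilon)$ (same pointwise argument) yields $\sup_{g\in\bar A}K(g,\theta)+R_N(\varepsilon)$ after taking logs and normalising by $T^{2q-1}$. For the lower bound I would pick a nearly-optimal $g^*\in A$ (replacing by a $C^2$ approximation if needed, using continuity of $K$ in the $H^1$ sense), and invoke Schilder's lower bound to say that $\tilde\P(\|\hat B^T-g^*\|_\infty<\varepsilon/2)\ge\exp(-T^{2q-1}(\tfrac12\int g^{*\prime 2}+o(1)))$. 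On this event the pointwise bound ensures the exponential factor is at least $\exp(T^{2q-1}(m\beta\int|g^*|^p-R_N(\varepsilon)))$, giving the matching lower bound up to $R_N(\varepsilon)$ plus an arbitrarily small slack from the choice of $g^*$. The main technical point, and the reason the error $R_N(\varepsilon)$ depends on $N$ rather than only on $\varepsilon$, is precisely the a priori $\|f\|_\infty$ bound obtained from $f\notin F_N$; without it the $|x|^{p/2}|y|^{p/2}$ cross-term could not be controlled, which is exactly why Lemmas \ref{extreme_lem} and \ref{totally_bdd_lem} are the indispensable companions to this proposition.
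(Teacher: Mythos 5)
Your proposal follows essentially the same route as the paper: many-to-one plus Schilder's theorem at speed $T^{2q-1}$, with the elementary inequality $|x+y|^p\le|x|^p+|y|^p+2|x|^{p/2}|y|^{p/2}$ and the uniform bound $\|f\|_\infty\le (N^2+1)/\sqrt{N}+\varepsilon$ coming from $f\notin F_N$ used to trade the potential term across the tube at cost $R_N(\varepsilon)$, so it is correct in substance. Two small touch-ups: in the lower bound the Schilder ball around $g^*$ must have radius at most $\varepsilon-\|g^*-f\|_\infty$ (radius $\varepsilon/2$ need not keep the spine inside the tube defining $N_T(A,\theta)$ when $g^*$ is near the boundary of $A$), and in the upper bound the recoupling should compare the potential integral of $f$ with that of a near-minimiser $h$ of the kinetic term rather than with the supremum of the potentials, which as you have arranged it costs a second error of the same $R_N(\varepsilon)$ type --- harmless, since every later use of the proposition only needs a deterministic error that tends to $0$ as $\varepsilon\to0$ for fixed $N$.
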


\begin{proof}
From Schilder's theorem (Theorem 5.1 of \cite{varadhan:large_devs_apps}) we have
\[\limsup_{T\to\infty}\frac{1}{T^{2q-1}}\log\Pb(\xi_T \in N_T(\bar A,\theta)) \leq - \inf_{f\in \bar A\cap H_1} \frac{1}{2}\int_0^\theta f'(s)^2 \diffd s\]
and
\[\liminf_{T\to\infty}\frac{1}{T^{2q-1}}\log\Pb(\xi_T \in N_T(A,\theta)) \geq - \inf_{f\in A\cap H_1} \frac{1}{2}\int_0^\theta f'(s)^2 \diffd s\]
Thus, by the many-to-one lemma,
\begin{align*}
\limsup_{T\to\infty}\frac{1}{T^{2q-1}}\log\Eb\big[|N_T(\bar A,\theta)|\big] &\leq \limsup_{T\to\infty} \frac{1}{T^{2q-1}}\log \Eb\left[e^{m\beta\int_0^{\theta T}|\xi_s|^p \diffd s} \ind_{\{\xi_T\in N_T(\bar A,\theta)\}}\right]\\
&\leq \sup_{g\in \bar A}m\beta\int_0^\theta |g(s)|^p \diffd s - \inf_{g\in \bar A\cap H_1}\frac{1}{2}\int_0^\theta g'(s)^2 \diffd s
\end{align*}
and similarly
\[\liminf_{T\to\infty}\frac{1}{T^{2q-1}}\log\Eb\big[|N_T(A,\theta)|\big] \geq \inf_{g\in A}m\beta\int_0^\theta |g(s)|^p \diffd s - \inf_{g\in A\cap H_1}\frac{1}{2}\int_0^\theta g'(s)^2 \diffd s.\]
Note that since $f\not\in F_N$, $\sup_{s\in[0,\theta]} |f(s)| \leq (N^2+1)/\sqrt{N}$ (split $[0,\theta]$ into $N^2+1$ intervals of equal width; then $f$ changes by at most $1/\sqrt{N}$ on each interval). Now fix $\delta>0$ and choose $h\in A\cap H_1$ such that
\[\int_0^\theta h'(s)^2 \diffd s \leq \inf_{g\in A\cap H_1} \int_0^\theta g'(s)^2 \diffd s + \delta.\]
For any $g_1, g_2\in A$,
\begin{align*}
\int_0^\theta |g_1(s)|^p \diffd s &\leq \int_0^\theta (|g_2(s)|+2\varepsilon)^p \diffd s\\
&\leq \int_0^\theta |g_2(s)|^p \diffd s + 2\int_0^\theta |g_2(s)|^{p/2}(2\varepsilon)^{p/2}\diffd s + \int_0^\theta(2\varepsilon)^p \diffd s\\
&\leq \int_0^\theta |g_2(s)|^p \diffd s + 2\left(\frac{N^2+1}{\sqrt{N}}+\varepsilon\right)^{p/2}(2\varepsilon)^{p/2} + (2\varepsilon)^{p/2}
\end{align*}
and similarly
\[\int_0^\theta |g_1(s)|^p \diffd s \geq \int_0^\theta |g_2(s)|^p \diffd s - R_N(\varepsilon).\]
Thus
\begin{multline*}
\inf_{g\in A}m\beta\int_0^\theta |g(s)|^p \diffd s - \inf_{g\in A\cap H_1}\frac{1}{2}\int_0^\theta g'(s)^2 \diffd s \\
\geq \sup_{g\in A} m\beta\int_0^\theta |g(s)|^p \diffd s - \inf_{g\in A\cap H_1}\frac{1}{2}\int_0^\theta g'(s)^2 \diffd s - R_N(\varepsilon)\\
\geq \sup_{g\in A\cap H_1} \left\{m\beta\int_0^\theta |g(s)|^p \diffd s - \frac{1}{2}\int_0^\theta g'(s)^2 \diffd s\right\} - R_N(\varepsilon)
\end{multline*}
and
\begin{multline*}
\sup_{g\in \bar A}m\beta\int_0^\theta |g(s)|^p \diffd s - \inf_{g\in \bar A\cap H_1}\frac{1}{2}\int_0^\theta g'(s)^2 \diffd s \\
\leq m\beta\int_0^\theta |h(s)|^p \diffd s - \frac{1}{2}\int_0^\theta h'(s)^2 \diffd s + R_N(\varepsilon) + \delta\\
\leq \sup_{g\in \bar A\cap H_1}\left\{m\beta\int_0^\theta |g(s)|^p \diffd s - \frac{1}{2}\int_0^\theta g'(s)^2 \diffd s\right\} + R_N(\varepsilon) + \delta.
\end{multline*}
Since $\delta>0$ was arbitrary, this gives the desired result.
\end{proof}

\begin{proof}[Proof of Theorem \ref{expected_growth_paths_thm}]
We start with the lower bound. Given an open set $A$, fix $\delta>0$ and choose $g\in A$ such that
\[|K(g,t)-\sup_{f\in A} K(f,t)|<\delta/2.\]
Now choose $N\in\mathbb{N}$ such that $g\not\in F_N$ (this is possible since $\bigcap_N F_N = \emptyset$) and then $\eps>0$ such that $B(g,\eps)\subseteq A$ and $R_N(\eps)<\delta/2$. Then by Proposition \ref{exp_prop},
\begin{align*}
&\liminf_{T\to\infty} T^{1-2q}\log \E[|N_T(A,\theta)|]\\
&\geq \liminf_{T\to\infty} T^{1-2q}\log \E[|N_T(B(g,\eps),\theta)|]\\
&\geq K(g,\theta) - R_N(\eps)\\
&\geq \sup_{f\in A} K(f,\theta) - \delta.
\end{align*}
Since $\delta>0$ was arbitrary, our lower bound follows.

We now proceed with the upper bound. Take a closed set $D$ and $\theta\in[0,1]$, and again fix $\delta>0$. By Lemma \ref{extreme_lem} we may first choose $N\in\mathbb{N}$ such that
\[\E[N_T(F_N,\theta)] \leq \exp\left( T^{2q-1} (\sup_{f\in D} K(f,\theta) - 1) \right)\]
for all large $T$. Let $D_N = D\setminus F_N$. By Lemma \ref{semicont_lem} we may choose $\eps>0$ such that
\[\sup_{f\in D_N^\varepsilon} K(f,\theta) + R_N(\varepsilon) \leq \sup_{f\in D_N} K(f,\theta) + \delta.\]
Then by Lemma \ref{totally_bdd_lem} we may choose $n$ and $f_1,\ldots,f_n$ such that
\[D_N \subset \bigcup_{i=1}^n B(f_i,\varepsilon) \subset D_N^\varepsilon.\]
But now by Proposition \ref{exp_prop} we have
\begin{align*}
&\limsup_{T\to\infty} \frac{1}{T^{2q-1}}\log\E[|N_T(D,\theta)|]\\
&\leq \limsup_{T\to\infty}
\frac{1}{T^{2q-1}}\log\left\{\E[|N_T(F_N,\theta)|] + \sum_{i=1}^n \E[|N_T(B(f_i,\eps),\theta)|]\right\}\\
&\leq \sup_{f\in D^\varepsilon} K(f,\theta) + R_N(\varepsilon)\\
&\leq \sup_{f\in D} K(f,\theta) + \delta
\end{align*}
where the last inequality is due to our choice of $N$ and $\epsilon$. Since $\delta>0$ was arbitrary, the upper bound follows and the proof is complete.
\end{proof}

\begin{proof}[Proof of Corollary~\ref{cor_unconstrained}]
Observe that if we take $D = \{ f \in C[0,1] : f(1) \in [a,b]  \}$ and $A = \{ f \in C[0,1] : f(1) \in (a,b) \}$  ($D$ is closed and $A$ open) then necessarily $\sup_{f\in D} K(f,t) =\sup_{f\in A} K(f,t).$ To see this observe that for any $f$ such that, say, $f(1)=a$ one can find a sequence $f_n \to f$ in $A$ such that $K(f_n,1) \to K(f,1)$ (just modify $f$ near time 1 to finish ever so slightly above $a$).
\end{proof}

\section{Proof of Theorem \ref{growth_paths_thm}}\label{growth_paths_sec}

\subsection{The heuristic for the lower bound in Theorem \ref{growth_paths_thm}}\label{lower_bound_section}
We want to show that $N_T(f,\varepsilon,\theta)$ cannot be too small for large $T$. Recall that for $f\in C[0,1]$ and $\theta\in[0,1]$, we defined
\[K(f,\theta):=\left\{\begin{array}{ll}m\beta\int_0^\theta |f(s)|^p \diffd s - \frac{1}{2}\int_0^\theta f'(s)^2 \diffd s & \hbox{ if } f\in H_1\\ -\infty & \hbox{ otherwise.}\end{array}\right.\]
which is the growth rate in expectation. We are going to show that the
almost sure behaviour is described by a rate function which differs by the presence of a truncation at the extinction time $\theta_0$.

\vspace{3mm}

\emph{Step 1.} Consider a small rescaled time $\eta$. How many particles are in $N_T(f,\varepsilon,\eta)$? If $\eta$ is much smaller than $\varepsilon$, then (with high probability) no particle has had enough time to reach anywhere near the edge of the tube (approximately distance $\varepsilon T$ from the origin) before time $\eta T$. Thus, with high probability,
\[|N_T(f,\varepsilon,\eta)| = |N(\eta T)|.\]
We can then give a very simple (and inaccurate!) estimate to show that for some $\nu>0$, with high probability,
\[|N(\eta T)| \geq \nu T.\]

\emph{Step 2.} Given their positions at time $\eta T$, the particles in
$N_T(f,\varepsilon,\eta)$ act independently. Each particle $u$ in this set
thus draws an independent branching Brownian motion. Let
$N_T(u,f,\varepsilon,\theta)$ be the set of descendants of $u$ that are in
$N_T(f,\varepsilon,\theta)$. How big is this set? Since $\eta$ is very
small, $u$ is close to the origin at time $\eta T$. Thus we may hope, for
a given $\delta>0$, to find some $\gamma<1$ such that (for each $u$)
\[\Pb\left(|N_T(u,f,\varepsilon,\theta)| < \exp(K(f,\theta)T^{2q-1} - \delta T^{2q-1})\right) \leq \gamma.\]

\emph{Step 3.} If $N_T(f,\varepsilon,\theta)$ is to be small, then each of the sets $N_T(u,f,\varepsilon,\theta)$ for $u\in N_T(f,\varepsilon,\eta)$ must be small. Thus
\[\Pb\left(|N_T(f,\varepsilon,\theta)| < \exp(K(f,\theta)T^{2q-1}-\delta T^{2q-1})\right) \leq \gamma^{\nu T},\]
and we may apply Borel-Cantelli to deduce our result along lattice times (that is, times $T_j$, $j\geq0$ such that there exists $\tau>0$ with $T_j - T_{j-1} = \tau$ for all $j\geq1$).

\emph{Step 4.} We carry out a simple tube-reduction argument to move to continuous time.  The idea here is that if the result were true on lattice times but not in continuous time, the number of particles in $N_T(f,\varepsilon,\theta)$ must fall dramatically at infinitely many non-lattice times. We simply rule out this possibility using standard properties of Brownian motion.

\vspace{3mm}

The most difficult part of the proof is step 2. However, the spine results of Section \ref{spine_section} will simplify our task significantly.

\subsection{The proof of the lower bound in Theorem \ref{growth_paths_thm}}

We begin with step 1 of our heuristic, considering the size of $N_T(f,\varepsilon,\eta)$ for small $\eta$. First we will need the following simple lemma.

\begin{lem}\label{local_time_lem}
For any $t,\delta>0$ and $k>0$,
\[\Pt\left(\int_0^t \ind_{\{\xi_s \in (-\delta, \delta)\}} \diffd
s > k\right) \leq 3e^{t/2 - k/(4\delta)}.\]
\end{lem}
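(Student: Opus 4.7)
The plan is to use a Chernoff bound combined with a Feynman--Kac style bound on the moment generating function of the occupation time. Writing $L_t := \int_0^t \ind_{\{\xi_s \in (-\delta,\delta)\}} \diffd s$, Markov's inequality gives, for any $\lambda > 0$,
\[
\Pt(L_t > k) \;\leq\; e^{-\lambda k}\, \Pt[e^{\lambda L_t}].
\]
The exponent $-k/(4\delta)$ in the target bound dictates the choice $\lambda = 1/(4\delta)$, so it remains to show that $\Pt[e^{L_t/(4\delta)}] \leq C e^{t/2}$ for some universal $C\leq 3$.

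To control the MGF I would construct an explicit non-negative supermartingale
\[
M_t \;:=\; \phi(\xi_t)\, e^{\lambda L_t - t/2}
\]
where $\phi:\R\to[1,C]$ is bounded, continuous, and satisfies $\frac12\phi'' + \lambda\, \ind_{\{|x|<\delta\}}\phi \leq \tfrac12\phi$ in a suitable weak sense (smooth off $\pm\delta$, with concave kinks at $\pm\delta$). Applied to $M_t$, the It\^o--Tanaka formula then produces a drift of the form $\bigl[\tfrac12\phi''(\xi_t) + \lambda\,\ind_{\{|\xi_t|<\delta\}}\phi(\xi_t) - \tfrac12\phi(\xi_t)\bigr]\diffd t$ plus local-time contributions at $\pm\delta$; by construction the drift is $\leq 0$ and the local-time terms enter with non-positive coefficients (this is exactly the role of the concave kinks). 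Hence $M_t$ is a non-negative supermartingale, so $\Pt[M_t] \leq \phi(0)$, and because $\phi\geq 1$ we get $\Pt[e^{\lambda L_t - t/2}] \leq \phi(0)$.

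For the construction of $\phi$, I would distinguish two regimes. When $\delta \geq \tfrac12$ one has $\lambda \leq \tfrac12$, so simply taking $\phi\equiv 1$ works (the drift inside the strip is $(\lambda - \tfrac12) \leq 0$, and outside it is $-\tfrac12 < 0$). When $\delta < \tfrac12$, set $\omega := \sqrt{1/(2\delta)-1}$ and take $\phi(x) = a\cos(\omega x)$ on $|x|\leq \delta$, which makes the drift vanish there. Extend outside by $\phi(x) = B e^{-(|x|-\delta)} + E$, choosing $B,E$ so that $\phi$ is $C^0$ at $\pm\delta$ and $\phi'$ jumps downward (a concave corner); this forces $B = a\omega\sin(\omega\delta)$ and $E = a(\cos(\omega\delta) - \omega\sin(\omega\delta))$. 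Requiring $E\geq 1$ fixes the smallest admissible $a$, namely $a = [\cos(\omega\delta) - \omega\sin(\omega\delta)]^{-1}$, and a short calculation using $\omega\delta = \sqrt{\delta(1-2\delta)/2} \leq 1/4$ shows that $a \leq 2$ uniformly in $\delta\in(0,1/2)$.

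Putting it all together, $\Pt[e^{\lambda L_t}] \leq \phi(0)\, e^{t/2} \leq 2 e^{t/2}$, and then Chernoff gives
\[
\Pt(L_t > k) \;\leq\; 2\, e^{t/2 - k/(4\delta)} \;\leq\; 3\, e^{t/2 - k/(4\delta)}.
\]
The main technical obstacle is verifying the uniform upper bound $\phi(0)\leq 2$ across all $\delta$: naively $\omega\to\infty$ as $\delta\to 0$, but the product $\omega\delta$ stays small, which is what keeps the required constant from blowing up. The factor $3$ (rather than $2$) in the statement simply provides comfortable slack so that these trigonometric estimates need only be made qualitatively.
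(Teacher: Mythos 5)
Your proof is correct, but it takes a genuinely different route from the paper's. The paper never bounds the moment generating function of the occupation time: it introduces the parabolically smoothed absolute value $h_\delta$ and uses It\^o's formula to write $\frac{1}{2\delta}\int_0^t \ind_{\{\xi_s\in(-\delta,\delta)\}}\,\diffd s = h_\delta(\xi_t)-\frac{\delta}{2}-\int_0^t h'_\delta(\xi_s)\,\diffd\xi_s$, then splits the event according to whether $|\xi_t|$ or $-\int_0^t h'_\delta(\xi_s)\,\diffd\xi_s$ exceeds $k/(4\delta)$ and applies exponential Chebyshev to each piece, using $\Pt[e^{|\xi_t|}]\le 2e^{t/2}$ and, since $|h'_\delta|\le 1$, the exponential supermartingale bound $\Pt[e^{-\int_0^t h'_\delta(\xi_s)\,\diffd\xi_s}]\le e^{t/2}$; that is exactly where the constant $3=2+1$ and the exponent $t/2-k/(4\delta)$ come from. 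You instead run a Chernoff bound at $\lambda=1/(4\delta)$ and control $\Pt[e^{\lambda L_t}]$ via a Feynman--Kac supermartingale with an explicit barrier: the cosine eigenfunction $a\cos(\omega x)$ with $\omega^2=2\lambda-1$ inside the strip, glued $C^1$ to $Be^{-(|x|-\delta)}+E$ outside (your equality choice of $B$ makes the local-time term vanish, which is fine), plus the trivial case $\phi\equiv 1$ when $\delta\ge\tfrac12$. The only delicate point is your uniform estimate, and it does check out: $(\omega\delta)^2=\delta(1-2\delta)/2\le 1/16$, and using $\sin u\le u$ and $\cos u\ge 1-u^2/2$ one gets $\cos(\omega\delta)-\omega\sin(\omega\delta)\ge \tfrac12+\tfrac{3\delta}{4}\ge\tfrac12$, hence $\phi(0)=a\le 2$, so you actually obtain the marginally sharper bound $2e^{t/2-k/(4\delta)}$. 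In terms of trade-offs: the paper's argument is softer and shorter (no case split in $\delta$, no eigenfunction construction, only one-line exponential-moment bounds), while yours isolates the natural object --- the occupation-time MGF --- yields a slightly better constant, and would adapt more directly if a sharper exponent were ever needed, at the cost of the gluing construction, the It\^o--Tanaka bookkeeping, and the uniform-in-$\delta$ verification.
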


\begin{proof}
Defining $h_\delta: \Rb\to\Rb$ by
\[h_\delta(x) := \left\{\begin{array}{ll} |x| & \hbox{if } |x|\geq \delta \\ \frac{\delta}{2} + \frac{x^2}{2\delta} & \hbox{if } |x|<\delta \end{array}\right.\]
we have, by approximating with $C^2$ functions and applying It\^o's formula, that
\[h_\delta(\xi_t) = \frac{\delta}{2} + \int_0^t h'_\delta (\xi_s) \diffd\xi_s + \frac{1}{2\delta}\int_0^t \ind_{\{\xi_s\in(-\delta,\delta)\}} \diffd s\]
(this function $h_\delta$ is often seen when studying local times of Brownian motion --- for a full proof of the above see, for example, Lemma 4.11 of \cite{roberts:thesis}). Also
\[\Pt[e^{-\int_0^t h'_\delta(\xi_s)\diffd\xi_s}] \leq \Pt[e^{-\int_0^t
h'_\delta(\xi_s)\diffd\xi_s - \frac{1}{2}\int_0^t h'_\delta(\xi_s)^2 \diffd s}]e^{t/2} \leq e^{t/2}.\]
Thus
\begin{align*}
\Pt\left(\int_0^t \ind_{\{\xi_s \in (-\delta,\delta)\}} \diffd s > k\right)
&= \Pt\left(h_\delta(\xi_t) - \frac{\delta}{2} - \int_0^t h_\delta '(\xi_s)
\diffd\xi_s > \frac{k}{2\delta}\right)\\
&\leq \Pt\left(|\xi_t| - \int_0^t h_\delta ' (\xi_s) \diffd\xi_s > \frac{k}{2\delta}\right)\\
&\leq \Pt\left(|\xi_t| > \frac{k}{4\delta}\right) + \Pt\left(-\int_0^t
h_\delta ' (\xi_s) \diffd\xi_s > \frac{k}{4\delta}\right)\\
&\leq \Pt\left[e^{|\xi_t|}\right]e^{-k/(4\delta)} + \Pt\left[e^{- \int_0^t
h_\delta ' (\xi_s) \diffd\xi_s}\right]e^{-k/(4\delta)}\\
&\leq 3e^{t/2 - k/(4\delta)},
\end{align*}
establishing the result.
\end{proof}

\begin{lem}\label{rightmost_lem}
For any continuous $f$ with $f(0)=0$ and any $\varepsilon>0$, there exist $\eta>0$, $\nu>0$, $k>0$ and $T_1$ such that for all $T\geq T_1$,
\[\Pb(|N_T(f,\varepsilon/2,\eta)|<\nu T) \leq e^{-kT}.\]
\end{lem}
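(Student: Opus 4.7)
The plan is to prove two things separately, each holding with probability at least $1 - e^{-cT}$ for $T$ large enough: (a) no particle in the BBM exits a fixed spatial strip by time $\eta T$, so every particle in $N(\eta T)$ is in fact in $N_T(f, \varepsilon/2, \eta)$; and (b) the total population $|N(\eta T)|$ is at least $\nu T$. Combining (a) and (b) via a union bound immediately gives $\Pb(|N_T(f, \varepsilon/2, \eta)| < \nu T) \leq e^{-kT}$.

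For step (a), use continuity of $f$ with $f(0) = 0$ to pick $\eta > 0$ small enough that $|f(s)| < \varepsilon/4$ for all $s \in [0, \eta]$; then the time-dependent tube of radius $(\varepsilon/2) T^q$ about $T^q f(\cdot / T)$ contains the fixed spatial strip $\{|x| < (\varepsilon/4) T^q\}$ throughout $[0, \eta T]$. By the many-to-one lemma, the probability that any particle leaves this strip before time $\eta T$ is bounded by $\tilde\Eb[e^{m\beta \eta T \sup_{s \leq \eta T}|\xi_s|^p} \ind_{\{\sup_{s \leq \eta T}|\xi_s| \geq (\varepsilon/4) T^q\}}]$. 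Using the Gaussian tail for the running maximum of a Brownian motion and integrating by parts, this reduces to a Laplace-type integral whose exponent is maximised on the integration range at its left endpoint, giving leading exponent $T^{2q-1}[m\beta \eta (\varepsilon/4)^p - (\varepsilon/4)^2/(2\eta)]$ (the two competing terms coincide in order of $T$ because $1 + pq = 2q - 1$). This bracket is negative for $\eta$ small enough, yielding a bound of the form $e^{-c_1 T^{2q-1}} \leq e^{-c_1 T}$.

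For step (b), exploit the fact that $A \geq 1$, so each branching event strictly increases the population: $|N(\eta T)| \geq 1 + B$, where $B$ is the number of branching events along the spine during $[0, \eta T]$. Under $\Pt$, conditional on the spine trajectory $\xi$ (a standard Brownian motion), $B$ is a Poisson variable with mean $\int_0^{\eta T} \beta |\xi_s|^p \diffd s$. When $p = 0$ this mean is deterministically $\beta \eta T$, and a Chernoff bound for the Poisson lower tail gives $\Pb(B < \nu T) \leq e^{-c_2 T}$ for any $\nu < \beta \eta$. When $p > 0$, apply the just-proved Lemma \ref{local_time_lem} with $t = \eta T$, $k = \eta T / 2$, and a suitably small $\delta \in (0, 1/4)$: the spine spends at most $\eta T / 2$ of its time inside $(-\delta, \delta)$ except on an event of probability $3 \exp(\eta T(1/2 - 1/(8\delta)))$, which is exponentially small in $T$. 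On the complement, $\int_0^{\eta T} \beta |\xi_s|^p \diffd s \geq \beta \delta^p \eta T / 2$, and the Poisson Chernoff bound again gives the desired exponential decay for any $\nu < \beta \delta^p \eta / 2$.

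The genuinely delicate part is step (b) when $p > 0$, because the breeding rate $\beta |x|^p$ vanishes precisely where the initial particle sits, so one cannot simply invoke the exponential growth that holds for constant-rate BBM. Lemma \ref{local_time_lem} provides exactly what is needed: the Brownian spine drifts away from the origin fast enough to accumulate a macroscopic amount of breeding time over $[0, \eta T]$. Once this lower bound on the Poisson parameter is available, the tail estimate on $B$ is routine, and the much weaker $\nu T$ lower bound (rather than the exponential growth one would expect) is more than enough.
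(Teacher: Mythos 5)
Your proposal is correct and follows essentially the same route as the paper's proof: the same split into (a) showing via the many-to-one lemma and Gaussian tail estimates that, for $\eta$ small, with probability $1-e^{-cT}$ no particle exits the strip of radius $\tfrac{\varepsilon}{4}T^q$ (so $|N_T(f,\varepsilon/2,\eta)|=|N(\eta T)|$), and (b) lower-bounding $|N(\eta T)|$ by the number of births along the spine, controlled by Lemma~\ref{local_time_lem} and a Poisson lower-tail (Chernoff) bound. The only differences are cosmetic (choosing $\delta$ first rather than expressing it through $\nu$, a Laplace-integral estimate instead of the shell decomposition, and an explicit $p=0$ case).
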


\begin{proof}
We first show that there exist $\eta>0$, $k_1>0$ and $T_1$ such that
\[\Pb(\exists u\in N(\eta T) : u\not\in N_T(f,\varepsilon/2,\eta)) \leq e^{-k_1 T} \hs \hs \forall T\geq T_1.\]
Choose $\eta$ small enough that $\sup_{s\in[0,\eta]} |f(s)|<\varepsilon/4$. Then, using the many-to-one lemma (at ($\star$)) and standard properties of Brownian motion,
\begin{align*}
&\Pb(\exists u\in N(\eta T) : u\not\in N_T(f,\varepsilon/2,\eta))\\
& = \Pb(\exists u\in N(\eta T),\hsl s\leq\eta : |X_u(sT) - T^q f(s)|\geq \varepsilon T^q/2)\\
&\leq \Pb(\exists u\in N(\eta T) : \sup_{s\leq\eta T}|X_u(s)| \geq \varepsilon T^q/4)\\
&\leq \sum_{k\geq1}\Pb(\exists u\in N(\eta T) : \sup_{s\leq\eta T} |X_u(s)| \in[k\varepsilon T^q/4,(k+1)\varepsilon T^q/4])\\
&\leq \sum_{k\geq1}e^{m\beta\int_0^{\eta T}((k+1)\varepsilon T^q)^p \diffd s}\Pb(\sup_{s\leq\eta T}|\xi_s|\in[k\varepsilon T^q/4,(k+1)\varepsilon T^q/4]) \tag{$\star$}\\
&\leq \sum_{k\geq1}4e^{m\beta(k+1)^p\varepsilon^p\eta T^{qp+1}}\Pb(\xi_{\eta T}\in[k\varepsilon T^q/4,(k+1)\varepsilon T^q/4])\\
&\leq \sum_{k\geq1}\frac{4}{\sqrt{2\pi\eta T}}\exp\left(m\beta(k+1)^p\varepsilon^p\eta T^{qp+1} - \frac{(k\varepsilon T^q)^2}{32\eta T}\right)\\
&\leq \sum_{k\geq1}\frac{4}{\sqrt{2\pi\eta
T}}\exp\left((m\beta\varepsilon^p\eta - \varepsilon^2/(32\eta))kT^{2q-1}\right)
\end{align*}
for sufficiently small $\eta$.
For small $\eta$ this is approximately
\[C \exp\left((m\beta\varepsilon^p\eta - \varepsilon^2/(32\eta))T^{2q-1}\right),\]
which gives a decay of $\exp(-k_1 T^{2q-1})$, which is more than
the decay required. We now aim to show that for any $\eta>0$, there exist $\nu>0$ and $k_2>0$ such that
\[\Pb(N(\eta T) < \nu T) \leq e^{-k_2 T}.\]
Indeed, if we let $n(t)$ be the number of births along the spine by time $t$, then certainly
\begin{align*}
&\Pb(N(\eta T) < \nu T)\\
&\leq \Pb(n(\eta T) < \nu T)\\
&\leq \Pb\left(\int_0^{\eta T} \ind_{\{\xi_s \in
[-(4\nu/(\beta\eta))^{1/p},(4\nu/(\beta\eta))^{1/p}]\}} \diffd s \geq \frac{1}{2}\eta T \right)\\
&\hspace{5mm}+ \Pb\left(\int_0^{\eta T} \ind_{\{\xi_s \in
[-(4\nu/(\beta\eta))^{1/p},(4\nu/(\beta\eta))^{1/p}]\}} \diffd s < \frac{1}{2}\eta T, \hsl n(\eta T) < \nu T \right).
\end{align*}
Lemma \ref{local_time_lem} shows that
\[\Pb\left(\int_0^{\eta T} \ind_{\{\xi_s \in
[-(4\nu/(\beta\eta))^{1/p},(4\nu/(\beta\eta))^{1/p}]\}} \diffd s \geq
\frac{1}{2}\eta T \right)\leq 3 \exp\left(\frac{\eta T}{2} - \frac{\eta
T}{8(4\nu/(\beta\eta))^{1/p})}\right)\]
so we have exponential decay in the first term provided that $\nu
< \beta\eta / 4^{p+1}$; and since births along the spine occur at rate at least $4\nu/\eta$ outside the interval
\[[-(4\nu/(\beta\eta))^{1/p},(4\nu/(\beta\eta))^{1/p}]\]
the second term is bounded above by the probability that a Poisson random variable with mean $2\nu T$ is less than $\nu T$. Let $Y\sim\hbox{Po}(2\nu T)$; then
\[\ind_{Y\leq\nu T}=\ind_{\exp(\nu T)\geq \exp(Y)} \leq \frac{e^{\nu T}}{e^{Y}}\]
so
\[P(Y\leq \nu T) \leq e^{\nu T}E[e^{-Y}] = e^{\nu T + 2\nu T(\exp(-1)-1)}\]
and this exponent is negative, so the second term also decays exponentially. Finally,
\[\Pb(|N_T(f,\varepsilon/2,\eta)|<\nu T) \leq \Pb(\exists u\in N(\eta T) : u\not\in N_T(f,\varepsilon/2,\eta)) + \Pb(N(\eta T) < \nu T)\]
and the proof is complete.
\end{proof}

We now move on to step 2, using the results of Section \ref{spine_section} to bound the probability that we have a small number of particles strictly below 1. The bound given is extremely crude, and there is much room for manoeuvre in the proof, but any improvement would only add unnecessary detail. The proof of this lemma runs exactly as for the corresponding result in the $p=0$ case seen in \cite{harris_roberts:scaled_growth}, with no extra technicalities; we include it again here for completeness.

\begin{lem}\label{bound_prob_below_1_lem}
If $f\in C^2[0,1]$, with $f(0)=0$, and $K(f,s)>0$ $\forall s\in(0,\theta]$, then for any $\varepsilon>0$ and $\delta>0$ there exists $T_0\geq0$ and $\gamma<1$ such that
\[\Pb\left(|N_T(f,\varepsilon,\theta)|<e^{K(f,\theta)T^{2q-1}-\delta T^{2q-1}}\right)\leq \gamma \hs\hs \forall T\geq T_0.\]
\end{lem}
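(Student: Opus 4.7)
The plan is to combine the spine martingale bound from Lemma \ref{int_by_parts_lem} with the uniform integrability of the family $\{Z_T(t)\}$ established in Proposition \ref{UI}. The intuition is that $Z_T(\theta T)$ is a mean-one random variable coming from a UI family, so it cannot be arbitrarily small with probability arbitrarily close to $1$; by Lemma \ref{int_by_parts_lem}, this translates into a lower bound on $|N_T(f,\varepsilon,\theta)|$ with probability bounded away from $0$.

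First I would check that the hypothesis $K(f,s)>0$ for all $s\in(0,\theta]$ already forces $f'(0)=0$, so that both conditions of Proposition \ref{UI} are met. Indeed, if one had $f'(0)\ne 0$, then a Taylor expansion near $0$ gives
\[
K(f,s)= \int_0^s \Big[m\beta|f(u)|^p-\tfrac{1}{2}f'(u)^2\Big]\diffd u = -\tfrac{1}{2}f'(0)^2 s + o(s),
\]
which is negative for small $s$, contradicting the assumption. The second hypothesis of Proposition \ref{UI} is simply a restatement of $K(f,\phi)>0$ for all $\phi\in(0,\theta]$, so Proposition \ref{UI} applies and $\{Z_T(t):T\ge 1,\ t\le \theta T\}$ is uniformly integrable under $\Pb$.

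Next, fix $\delta>0$. I would apply Lemma \ref{int_by_parts_lem} with $\delta/2$ in place of its $\delta$ and $\varepsilon$ chosen small enough so that both Proposition \ref{UI} and Lemma \ref{int_by_parts_lem} hold. Rearranging, for $T\geq T_0$ large enough that $\pi^2\theta/(8\varepsilon^2 T^{2q})\le \tfrac{\delta}{4}T^{2q-1}$, one obtains
\[
|N_T(f,\varepsilon,\theta)| \;\ge\; Z_T(\theta T)\,\exp\!\Big( K(f,\theta)T^{2q-1}-\tfrac{3\delta}{4}T^{2q-1}\Big).
\]
In particular, on the event $\{|N_T(f,\varepsilon,\theta)|<\exp(K(f,\theta)T^{2q-1}-\delta T^{2q-1})\}$ one must have $Z_T(\theta T)<\exp(-\delta T^{2q-1}/4)$.

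Finally, the UI property bounds the probability of $Z_T(\theta T)$ being tiny. By uniform integrability one may fix $K>0$ with $\Eb[Z_T(\theta T)\ind_{\{Z_T(\theta T)>K\}}]\le 1/4$ for all $T\ge 1$. For any $\eta\in(0,1/2)$,
\[
1 \;=\; \Eb[Z_T(\theta T)] \;\le\; \eta + K\,\Pb\big(Z_T(\theta T)\ge\eta\big) + \tfrac14,
\]
so $\Pb(Z_T(\theta T)<\eta)\le 1-(3/4-\eta)/K$. Choosing $\eta=\exp(-\delta T^{2q-1}/4)$ (which is less than $1/2$ for all $T$ large enough) gives
\[
\Pb\big(|N_T(f,\varepsilon,\theta)|<e^{K(f,\theta)T^{2q-1}-\delta T^{2q-1}}\big)\;\le\; \Pb(Z_T(\theta T)<\eta)\;\le\; 1-\tfrac{1}{4K}\;=:\;\gamma\;<\;1,
\]
as required. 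The only genuinely hard input is Proposition \ref{UI}; all that remains is this short combination argument.
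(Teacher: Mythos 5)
Your core argument is the same as the paper's: compare $|N_T(f,\varepsilon,\theta)|$ with $Z_T(\theta T)$ via Lemma \ref{int_by_parts_lem}, then use the uniform integrability of Proposition \ref{UI} to show $\Pb(Z_T(\theta T)$ small$)$ is bounded away from $1$. That part is fine. The gap is in your opening reduction: the claim that $K(f,s)>0$ for all small $s$ forces $f'(0)=0$ is only true for $p>0$. Your Taylor expansion uses that $\int_0^s m\beta|f(u)|^p\,\diffd u=o(s)$, which holds when $p>0$ because $f(u)=O(u)$ gives a contribution of order $s^{1+p}$; but for $p=0$ (which is included in the paper's standing assumption $p\in[0,2)$ and in this lemma) the potential term contributes $m\beta s$, of the same order as $-\tfrac12 f'(0)^2 s$. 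For instance $f(s)=cs$ with $0<c^2<2m\beta$ satisfies $K(f,s)=(m\beta-\tfrac12 c^2)s>0$ for all $s$ yet $f'(0)\neq0$, so Proposition \ref{UI} (which genuinely needs $f'(0)=0$, via Lemma \ref{spine_domination}) cannot be invoked directly. The paper closes this case by a perturbation: choose $g\in C^2$ with $g(0)=g'(0)=0$, $\sup_{s\le\theta}|f-g|\le\varepsilon/2$, $K(g,\phi)>0$ for all $\phi\le\theta$ and $K(g,\theta)>K(f,\theta)-\delta/2$ (modifying $f$ by a polynomial on a short initial interval), and then uses $N_T(g,\varepsilon/2,\theta)\subseteq N_T(f,\varepsilon,\theta)$ to transfer the bound. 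You need this step (or restrict your argument to $p>0$).

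A smaller omission: your argument fixes $\varepsilon$ "small enough" for Proposition \ref{UI} and Lemma \ref{int_by_parts_lem}, but the statement asserts the bound for every $\varepsilon>0$. This is harmless once you add the paper's observation that $|N_T(f,\varepsilon,\theta)|$ is nondecreasing in $\varepsilon$, so the estimate for small $\varepsilon$ implies it for all larger $\varepsilon$; say this explicitly.
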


\begin{proof}
Note that by Lemma \ref{int_by_parts_lem} for small enough $\varepsilon>0$ and large enough $T$,
\[|N_T(f,\varepsilon,\theta)|e^{-K(f,\theta)T^{2q-1}+\delta T^{2q-1}/2} \geq Z_T(\theta T)\]
and hence
\[\Pb\left(|N_T(f,\varepsilon,\theta)| < e^{K(f,\theta)T^{2q-1}-\delta T^{2q-1}}\right) \leq \Pb\left(Z_T(\theta T) < e^{-\delta T^{2q-1}/2}\right).\]
Suppose first that $f'(0)=0$. Then $\Eb[Z_T(\theta T)]=1$ and, again for small enough $\varepsilon$, by Proposition \ref{UI} the set $\{Z_T(t), T\geq 1, t\in[1,\theta T]\}$ is uniformly integrable. Thus we may choose $K$ such that
\[\sup_{T\geq1}\Eb[Z_T(\theta T) \ind_{\{Z_T(\theta T) > K\}}] \leq 1/4,\]
and then
\begin{align*}
1 = \Eb[Z_T(\theta T)] &= \Eb[Z_T(\theta T)\ind_{\{Z_T(\theta T)\leq 1/2\}}] + \Eb[Z_T(\theta T)\ind_{\{1/2< Z_T(\theta T)\leq K\}}]\\
&\hspace{20mm} + \Eb[Z_T(\theta T)\ind_{\{Z_T(\theta T) > K\}}]\\
&\leq 1/2 + K\Pb(Z_T(\theta T) > 1/2 ) + 1/4
\end{align*}
so that
\[\Pb(Z_T(\theta T) > 1/2) \geq 1/(4K).\]
Hence for large enough $T$,
\[\Pb\left(|N_T(f,\varepsilon,\theta)| < e^{K(f,\theta)T^{2q-1}-\delta T^{2q-1}}\right) \leq 1 - 1/(4K).\]
This is true for all small $\varepsilon>0$; but increasing $\varepsilon$ only increases $|N_T(f,\varepsilon,\theta)|$ so the statement holds for all $\varepsilon>0$. Finally, if $f'(0)\neq0$ then choose $g\in C^2[0,\theta]$ such that $g(0)=g'(0)=0$, $\sup_{s\leq\theta}|f-g|\leq\varepsilon/2$, $K(g,\phi)>0$ $\forall \phi\leq\theta$ and $K(g,\theta)> K(f,\theta)-\delta/2$ (for small $\eta$, the function
\[g(t):= \left\{\begin{array}{ll}f(t) + at + bt^2 + ct^3 + dt^4 & \hbox{if }t\in[0,\eta)\\ f(t) & \hbox{if } t\in[\eta,1]\end{array}\right.\]
will work for suitable $a,b,c,d\in\mathbb{R}$). Then
\begin{align*}
\Pb(|N_T(f,\varepsilon,\theta)| < e^{K(f,\theta)T^{2q-1}-\delta T^{2q-1}}) &\leq \Pb(|N_T(g,\varepsilon/2,\theta)| < e^{K(g,\theta)T^{2q-1}-\delta T^{2q-1}/2})\\
&\leq 1-1/(4K)
\end{align*}
as required.\end{proof}

We are now ready to carry out step 3 of the heuristic. Again this runs exactly as in \cite{harris_roberts:scaled_growth}.

\begin{prop}\label{lower_prop}
Suppose that $f\in C^2[0,1]$ and $K(f,s)>0$ $\forall s\in(0,\theta]$. Then for lattice times $T_j$
(recall that this means that there exists $\tau>0$ such that  $T_j-T_{j-1}= j\tau  $ for all $j\geq1$),
\[\liminf_{j\to\infty}\frac{1}{T_j^{2q-1}}\log |N_{T_j}(f,\varepsilon,\theta)|\geq K(f,\theta)\]
almost surely.
\end{prop}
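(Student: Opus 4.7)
The plan is to follow the four-step heuristic outlined in Section~\ref{lower_bound_section}. Fix $\delta>0$; since $\delta$ is arbitrary, it suffices to show that almost surely $|N_{T_j}(f,\varepsilon,\theta)|\geq \exp((K(f,\theta)-\delta)T_j^{2q-1})$ for all $j$ sufficiently large. Using $K(f,s)>0$ on $(0,\theta]$ and $f\in C^2[0,1]$, first choose $\eta\in(0,\theta)$ small enough that $\sup_{s\in[0,\eta]}|f(s)|<\varepsilon/4$, and invoke Lemma~\ref{rightmost_lem} to obtain constants $\nu,k_1>0$ such that $\Pb(|N_T(f,\varepsilon/2,\eta)|<\nu T)\leq e^{-k_1 T}$ for all $T$ large; denote the complementary ``many particles near $f$ at time $\eta T$'' event by $E_T$.

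The core step is to show that, on $E_T$ and conditionally on $\F_{\eta T}$, for every particle $u\in N_T(f,\varepsilon/2,\eta)$ at position $y=X_u(\eta T)$ (which satisfies $|y|<\varepsilon T^q$), there exists a deterministic $\gamma<1$, uniform in $u$, such that
\[\Pb\!\left(|N_T(u,f,\varepsilon,\theta)|<e^{(K(f,\theta)-\delta)T^{2q-1}}\,\big|\,\F_{\eta T}\right)\leq \gamma\]
for all $T$ large, where $N_T(u,f,\varepsilon,\theta)$ denotes the descendants of $u$ lying in $N_T(f,\varepsilon,\theta)$. Because the branching rate $\beta|x|^p$ is spatially inhomogeneous, the sub-BBM rooted at $(y,\eta T)$ is not a space--time translate of a BBM from the origin, so Lemma~\ref{bound_prob_below_1_lem} cannot be applied directly as a black box. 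Instead I rerun the spine construction of Section~\ref{spine_section} for the sub-BBM, with a modified target function $\tilde f_u\in C^2[0,1]$ which interpolates smoothly from $\tilde f_u(0)=y/T^q$, $\tilde f_u'(0)=0$ over $[0,\eta]$ and coincides with $f$ on $[\eta,\theta]$. Since $|y/T^q|<\varepsilon$, the ``rerouting cost'' on $[0,\eta]$ is $O(\varepsilon^2/\eta)$ in rescaled units; by choosing $\varepsilon$ small we can arrange $K(\tilde f_u,s)>0$ on $(0,\theta]$ and $K(\tilde f_u,\theta)\geq K(f,\theta)-\delta/2$ uniformly in $u$. Proposition~\ref{UI}, applied to this modified setup (its bounds depend only on $\sup|\tilde f_u|$ and a uniform positive lower bound for $K(\tilde f_u,\cdot)$), then yields the required $\gamma<1$ by the same argument as Lemma~\ref{bound_prob_below_1_lem}. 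This is the main technical obstacle.

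By the branching property the sub-BBMs spawned by distinct particles of $N_T(f,\varepsilon/2,\eta)$ are conditionally independent given $\F_{\eta T}$, so on $E_T$
\[\Pb\!\left(|N_T(f,\varepsilon,\theta)|<e^{(K(f,\theta)-\delta)T^{2q-1}}\,\big|\,\F_{\eta T}\right)\leq \gamma^{\lfloor\nu T\rfloor},\]
and hence the unconditional probability is bounded by $e^{-k_1 T}+\gamma^{\lfloor\nu T\rfloor}$. Along a lattice $T_j=j\tau$ this is summable in $j$; Borel--Cantelli gives that $T_j^{1-2q}\log|N_{T_j}(f,\varepsilon,\theta)|\geq K(f,\theta)-\delta$ for all large $j$, almost surely. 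Letting $\delta\downarrow 0$ along a countable sequence completes the proof.
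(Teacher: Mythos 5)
Your skeleton is exactly the paper's (Lemma~\ref{rightmost_lem} at time $\eta T$, conditional independence given $\Fg_{\eta T}$, a per-particle failure probability $\gamma<1$ giving $\gamma^{\nu T}$, Borel--Cantelli along the lattice, then $\delta\downarrow0$), and you are right to flag that, since $\beta|x|^p$ is spatially inhomogeneous, the sub-BBM rooted at $(y,\eta T)$ is not a translate of the original process. But the step you yourself call ``the main technical obstacle'' is precisely where the argument stops being a proof. Proposition~\ref{UI} is proved for the BBM started at the origin and for a \emph{fixed} path $f\in C^2[0,1]$ with $f(0)=0$ and $f'(0)=0$; your $\tilde f_u$ has $\tilde f_u(0)=y/T^q\neq0$, depends on the random position $X_u(\eta T)$ and on $T$, and the process you apply it to starts at $y$, so the proposition does not apply and would have to be re-proved in a form uniform over this random, $T$-dependent family. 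Moreover the sufficient condition you assert for such uniformity (``bounds depend only on $\sup|\tilde f_u|$ and a uniform positive lower bound for $K(\tilde f_u,\cdot)$'') is not what the proofs of Lemma~\ref{spine_domination} and Proposition~\ref{UI} actually require: they need, on \emph{every} initial segment and uniformly over the family, that $(1-\eta')m\beta\inf_{g\in B(\tilde f_u,\varepsilon)}\int_0^t|g|^p$ dominates $\frac12\int_0^t(\tilde f_u')^2+2\varepsilon\int_0^t|\tilde f_u''|$, i.e.\ quantitative control of $\tilde f_u''$ and of the margin in $K$ near $s=0$. Your $O(\varepsilon^2/\eta)$ ``rerouting cost'' bounds the total cost of the merge but does not give $K(\tilde f_u,s)>0$ for every small $s$: when $y/T^q$ is close to zero (or negative) and $p\ge1$, the integrand $m\beta|\tilde f_u|^p-\frac12(\tilde f_u')^2$ is negative as soon as the merge starts moving, and whether the running integral stays positive depends on how the interpolation is built, which is not checked. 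There are also smaller slips: $\eta$ is chosen from $\varepsilon$ and then ``$\varepsilon$ small'' is invoked with that $\eta$ (a circular order of quantifiers); $\varepsilon$ is given in the statement, so shrinking it requires the monotonicity remark $|N_T(f,\varepsilon,\theta)|\ge|N_T(f,\varepsilon',\theta)|$ used in Lemma~\ref{bound_prob_below_1_lem}; and $\tilde f_u$ is parametrised on $[0,\theta]$ although the sub-BBM only has horizon $(\theta-\eta)T$.

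For comparison, the paper never re-runs the spine machinery from a displaced start. It notes that any descendant of $u\in N_T(f,\varepsilon/2,\eta)$ whose path, recentred at $X_u(\eta T)$, stays within $\varepsilon/2$ of the single deterministic shifted path $g(t)=f(t+\eta)-f(\eta)$ lies in $N_T(f,\varepsilon,\theta)$ (triangle inequality), bounds each conditional factor by $\Pb\bigl(|N_T(g,\varepsilon/2,\theta-\eta)|<e^{K(f,\theta)T^{2q-1}-\delta T^{2q-1}}\bigr)$ for a $C^2$ extension $g$, and then applies Lemma~\ref{bound_prob_below_1_lem} once to $g$ (whose internal construction already deals with $g'(0)\neq0$), so the resulting $\gamma<1$ is uniform in $u$ and $T$ with no new uniform-integrability statement needed. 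To make your route work you would have to prove a displaced-start, uniform-in-$(u,T)$ analogue of Proposition~\ref{UI}; as written, that key estimate is asserted rather than established, so the proposal has a genuine gap at its central step.
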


\begin{proof}
For a particle $u$, recall that $u< v$ means that $u$ is an ancestor of
$v$ and  define
\[N_T(u,f,\varepsilon,\theta) := \{v\in N(\theta T) : u< v, \hs |X_v(tT)-T^q f(t)|<\varepsilon T^q \hs \forall t\in[0,\theta ]\},\]
the set of descendants of $u$ that are in $N_T(f,\varepsilon,\theta)$.
Then for $\delta >0$ and $\eta \in[0,\theta]$,
\begin{align*}
\Pb&\left(\left.|N_T(f,\varepsilon,\theta)| < e^{K(f,\theta)T^{2q-1}-\delta T^{2q-1}} \right| \Fg_{\eta T}\right)\\
&\leq \prod_{u\in N_T(f,\varepsilon/2,\eta)}\Pb\left(\left.|N_T(u,f,\varepsilon,\theta)|<e^{K(f,\theta)T^{2q-1} - \delta T^{2q-1}} \right| \Fg_{\eta T}\right)\\
&\leq \prod_{u\in N_T(f,\varepsilon/2,\eta)}\Pb\left(|N_T(g,\varepsilon/2,\theta-\eta)|<e^{K(f,\theta)T^{2q-1}-\delta T^{2q-1}}\right)
\end{align*}
since, given $\Fg_{\eta T}$, $\{|N_T(u,f,\varepsilon,\theta)| : u\in N_T(f,\varepsilon/2,\eta)\}$ are independent random variables, and where $g:[0,1]\to\Rb$ is any twice continuously differentiable extension of the function
\[\begin{array}{rrcl} g: & [0,\theta-\eta] &\to& \Rb\\
                          & t               &\to& f(t+\eta)-f(\eta).\end{array}\]
If $\eta$ is small enough, then
\[|K(f,\theta) - K(g,\theta - \eta)|<\delta/2\]
and
\[K(g,s) > 0 \hs\hs \forall s\in(0,\theta-\eta].\]
Hence, applying Lemma \ref{bound_prob_below_1_lem}, there exists $\gamma<1$ such that for all large $T$,
\begin{align*}
\Pb&\left(|N_T(g,\varepsilon/2,\theta-\eta)|< e^{K(f,\theta)T^{2q-1}-\delta T^{2q-1}}\right)\\
&\leq \Pb\left(|N_T(g,\varepsilon/2,\theta-\eta)| < e^{K(g,\theta-\eta)T^{2q-1}-\delta T^{2q-1}/2}\right)\\
&\leq \gamma.
\end{align*}
Thus for large $T$,
\begin{equation}\label{cond_prob}
\Pb\left(\left.|N_T(f,\varepsilon,\theta)| < e^{K(f,\theta)T^{2q-1}-\delta T^{2q-1}} \right| \Fg_{\eta T}\right) \leq \gamma^{|N_T(f,\varepsilon/2,\eta)|}.
\end{equation}
Taking expectations in (\ref{cond_prob}), and then applying Lemma \ref{rightmost_lem}, for small $\eta$ and some $\nu, k>0$, for large $T$ we have
\begin{align*}
\Pb&\left(|N_T(f,\varepsilon,\theta)| < e^{K(f,\theta)T^{2q-1}-\delta T^{2q-1}} \right)\\
&\leq \Pb\left(|N_T(f,\varepsilon/2,\eta)| < \nu T \right) + \gamma^{\nu T}\\
&\leq e^{-kT} + \gamma^{\nu T}.
\end{align*}
The Borel-Cantelli lemma now tells us that for any lattice times $T_j$, $j\geq0$,
\[\Pb\left(\liminf_{j\to\infty}\frac{1}{T_j^{2q-1}}\log|N_j(f,\varepsilon,\theta)| < K(f,\theta)-\delta \right) = 0,\]
and taking a union over $\delta>0$ gives the result.
\end{proof}

We now move to continuous time using step 4 of our heuristic.

\begin{prop}\label{tube_reduction}
Suppose that $f\in C^2[0,1]$ and $K(f,s)>0$ $\forall s\in(0,\theta]$. Then
\[\liminf_{T\to\infty}\frac{1}{T^{2q-1}}\log |N_T(f,\varepsilon,\theta)|\geq K(f,\theta)\]
almost surely.
\end{prop}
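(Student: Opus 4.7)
The plan is to leverage the lattice-time bound of Proposition~\ref{lower_prop}, applied with the slightly smaller tube radius $\varepsilon/2$, and show that going from a lattice to continuous $T$ costs nothing on the exponential scale. Fix $\delta>0$ and let $T_j$ be lattice times with bounded spacing $\tau:=T_{j+1}-T_j$. By Proposition~\ref{lower_prop} applied to $(f,\varepsilon/2,\theta)$,
\[|N_{T_j}(f,\varepsilon/2,\theta)| \geq \exp\bigl((K(f,\theta)-\delta)\,T_j^{2q-1}\bigr)\qquad\text{a.s., for all $j$ large enough.}\]
I then want to show that, up to an event summable in $j$, for every $T\in[T_j,T_{j+1}]$ at least half of these ancestors produce a descendant that belongs to $N_T(f,\varepsilon,\theta)$.

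The first step is to compare the tube centres at the scales $T_j$ and $T$. Because $f\in C^2[0,1]$ is bounded and Lipschitz, a direct triangle inequality yields $|T_j^q f(t/T_j)-T^q f(t/T)|=O(T_j^{q-1}\tau)$ uniformly in $t\in[0,\theta T_j]$ and $T\in[T_j,T_{j+1}]$, which is $o(T_j^q)$ since $q\geq 1$. Consequently any particle $u\in N_{T_j}(f,\varepsilon/2,\theta)$ automatically satisfies $|X_u(t)-T^q f(t/T)|<\tfrac{3\varepsilon}{4}T^q$ on $[0,\theta T_j]$, for $j$ sufficiently large.

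The second step extends the path to $[\theta T_j,\theta T]$. For each such $u$, fix one lineage issued from $u$ (say, always follow the first child at every split) and let $Y_u(t)$ denote its spatial position for $t\in[\theta T_j,\theta T]$; under $\P$ this is a Brownian motion started at $X_u(\theta T_j)$. The $T$-scale tube centre varies by only $O(T^{q-1}\tau)$ on this interval, so $Y_u$ stays in the $\varepsilon$-tube throughout as soon as $\sup_t|Y_u(t)-X_u(\theta T_j)|\leq\tfrac{\varepsilon}{8}T^q$. A standard Gaussian/reflection-principle tail bound gives, for some $c=c(\varepsilon,\theta)>0$ independent of $u$ and $T$,
\[\P\bigl(\sup_{t\in[\theta T_j,\theta T]}|Y_u(t)-X_u(\theta T_j)|>\tfrac{\varepsilon}{8}T^q \bigm| \Fg_{\theta T_j}\bigr)\leq 4\exp(-c\,T_j^{2q}/\tau).\]

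To conclude, conditional on $\Fg_{\theta T_j}$ the chosen lineages are independent, so Markov's inequality bounds the probability that fewer than half of the selected lineages succeed by $2|N(\theta T_j)|\cdot 4\exp(-cT_j^{2q}/\tau)$. Since $|N(\theta T_j)|$ grows at most exponentially in $T_j^{2q-1}$ (e.g.\ by Theorem~\ref{expected_growth_paths_thm}, or a one-line many-to-one bound), this is summable in $j$, and Borel--Cantelli yields, a.s.\ for all $j$ large and all $T\in[T_j,T_{j+1}]$,
\[|N_T(f,\varepsilon,\theta)|\geq\tfrac12|N_{T_j}(f,\varepsilon/2,\theta)|\geq\exp\bigl((K(f,\theta)-2\delta)T_j^{2q-1}\bigr).\]
Since $T_j^{2q-1}/T^{2q-1}\to 1$, letting $\delta\downarrow 0$ completes the proof. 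The main delicate point is arranging for the Gaussian exponent $T_j^{2q}/\tau$ to dominate both the $T_j^{2q-1}$ growth of the surviving population and the various mismatches of order $T^{q-1}\tau$ between the $T_j$- and $T$-scales; with $\tau$ held fixed and $q\geq 1$, everything comfortably balances.
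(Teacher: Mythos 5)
Your proof is correct and follows essentially the same route as the paper: invoke the lattice-time bound of Proposition~\ref{lower_prop} (with a slightly smaller tube radius), then show via a Brownian fluctuation estimate over the bounded windows $[T_j,T_{j+1}]$ and Borel--Cantelli that the count cannot collapse between lattice times, and conclude using $T_j^{2q-1}/T^{2q-1}\to 1$. The only cosmetic difference is that the paper controls the excursion event for all descendants simultaneously via the many-to-one lemma and an enlarged tube $B(f,2\varepsilon)$, whereas you track one chosen lineage per ancestor and use conditional independence plus Markov's inequality (taking expectations of your random bound, or noting the conditional Markov bound is simply $2p_j$, to get summable probabilities); both work because the decay $\exp(-cT_j^{2q}/\tau)$ dominates the $e^{O(T_j^{2q-1})}$ population size.
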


\begin{proof}
We claim first that for large enough $j\in\mathbb{N}$, provided that $T_1\leq1$,
\begin{multline*}
\left\{|N_{T_j}(f,\varepsilon,\theta)|>\inf_{T\in[T_j,T_{j+1}]}|N_T(f,2\varepsilon,\theta|\right\} \\
 \subset \left\{\exists v\in N_{T_j}(f,\varepsilon,\theta), u\in N(\theta T_{j+1}) : v< u, \sup_{T\in[T_j,T_{j+1}]}\hspace{-1.5mm} |X_u(\theta T)-X_u(\theta T_j)| > \frac{\varepsilon T_j^q }{ 2}\right\}.
\end{multline*}
Indeed, if $v\in N_{T_j}(f,\varepsilon,\theta)$, $T\in[T_j,T_{j+1}]$ and $S\in[0,\theta T]$ then for any descendant $u$ of $v$ at time $\theta T$,
\begin{align*}
|X_u(S) - T^q f(S/T)| &\leq |X_u(S) - X_u(S\wedge \theta T_j)| + |X_u(S\wedge \theta T_j)-T_j^q f(\textstyle{\frac{S\wedge \theta T_j}{T_j}})|\\
&\hspace{10mm} + |T_j^q f(\textstyle{\frac{S\wedge \theta T_j}{T_j}}) - T_j^q f(S/T)| + |T_j^q f(S/T) - T^q f(S/T)|\\
&\leq |X_u(S) - X_u(S\wedge \theta T_j)| + \varepsilon T_j^q\\
&\hspace{10mm} + T_j^q\sup_{\substack{x,y\in[0,\theta]\\|x-y|\leq 1/T_j}} |f(x)-f(y)| + \sup_{x\in[0,\theta]}|f(x)||T_{j+1}^q - T_j^q|\\
&\leq |X_u(S) - X_u(S\wedge \theta T_j)| + \frac{3\varepsilon}{2}T_j^q \hs\hs \hbox{ for large j;}
\end{align*}
so that if any particle is in $N_{T_j}(f,\varepsilon,\theta)$ but does not have a descendant in $N_T(f,2\varepsilon,\theta)$ then its descendants must satisfy
\[\sup_{S\in[\theta T_j, \theta T_{j+1}]} |X_u(S) - X_u(T_j)| \geq \varepsilon T_j^q/ 2.\]
This is enough to establish the claim, and we deduce via the many-to-one lemma plus Lemma \ref{xp_lem} and standard properties of Brownian motion (see Proposition 4.15 of \cite{roberts:thesis} for a more detailed justification) that
\begin{multline*}
\Pb\left(|N_{T_j}(f,\varepsilon,\theta)|>\inf_{T\in[T_j,T_{j+1}]}|N_T(f,2\varepsilon,\theta)|\right)\\
\leq 4e^{m\beta T_{j}^{2q-1} \sup_{g\in B(f,\varepsilon)}\int_0^\theta
|g(s)|^p \diffd s}
\sum_{k=1}^\infty e^{-(k\varepsilon T_j^q)^2/(8\theta T_1) + m\beta T_j^{2q-2}(|f(\theta)|+(k+3)\varepsilon/2)}
\end{multline*}
which, as in Lemma \ref{rightmost_lem}, is exponentially small in $T_j$. Thus the probabilities are summable and we may apply Borel-Cantelli to see that
\[\Pb(|N_{T_j}(f,\varepsilon,\theta)| > \inf_{T\in[T_j,T_{j+1}]}|N_T(f,2\varepsilon,\theta)| \hbox{ infinitely often} )=0.\]
Now,
\begin{multline*}
\Pb\left(\liminf_{T\to\infty}\frac{1}{T^{2q-1}}\log |N_T(f,\varepsilon,\theta)| < K(f,\theta) \right)\\
\leq \Pb\left(\liminf_{j\to\infty}\frac{1}{T_j^{2q-1}}\log |N_{T_j}(f,2\varepsilon,\theta)| < K(f,\theta)\right)\\
+ \Pb\left(\liminf_{j\to\infty} \frac{\inf_{T\in[T_j,T_{j+1}]}|N_T(f,\varepsilon,\theta)|}{|N_{T_j}(f,2\varepsilon,\theta)|} < 1 \right)
\end{multline*}
which is zero by Proposition \ref{lower_prop} and the above.
\end{proof}

This gives us our desired lower bound for Theorem \ref{growth_paths_thm}.

\begin{cor}\label{lower_bd}
For any open set $A\subset C[0,1]$ and $\theta\in[0,1]$, we have
\[\liminf_{T\to\infty}\frac{1}{T^{2q-1}}\log|N_T(A,\theta)| \geq \sup
\{  K(f,\theta) : f\in A, \theta_0(f) \ge \theta \}\]
almost surely.
\end{cor}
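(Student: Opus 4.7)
The plan is to deduce Corollary~\ref{lower_bd} from Proposition~\ref{tube_reduction} through two reductions. First, since $A$ is open, it suffices to exhibit, for each $\delta > 0$, a single function $\tilde f \in A$ that is $C^2[0,1]$, satisfies $K(\tilde f, s) > 0$ for all $s \in (0, \theta]$, and has $K(\tilde f, \theta) \ge L - \delta$, where $L := \sup\{K(f, \theta) : f \in A, \theta_0(f) \ge \theta\}$ (if $L=-\infty$ the claim is vacuous). Indeed, picking $\varepsilon > 0$ small enough that $B(\tilde f, \varepsilon) \subset A$ gives $N_T(\tilde f, \varepsilon, \theta) \subset N_T(A, \theta)$, and Proposition~\ref{tube_reduction} then supplies a full-measure event on which
\begin{equation*}
\liminf_{T\to\infty}\frac{1}{T^{2q-1}}\log|N_T(A,\theta)| \ \ge\ K(\tilde f, \theta) \ \ge\ L - \delta.
\end{equation*}
Intersecting such events over a countable sequence $\delta_k \downarrow 0$ yields the corollary.

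The real work is the construction of $\tilde f$. Starting from $f_0 \in A$ with $\theta_0(f_0) \ge \theta$ and $K(f_0, \theta) > L - \delta/2$, one first observes that $\theta_0(f_0) \ge \theta$ is equivalent, by continuity of $K(f_0, \cdot)$, to $K(f_0, s) \ge 0$ on $[0, \theta]$. I would approximate $f_0$ in $H_1$ norm (i.e.\ uniformly together with $L^2$ convergence of the derivative) by $\bar f \in C^2[0,1]$ with $\bar f(0) = 0$, a standard mollification with a boundary correction. The $H_1$ convergence ensures $K(\bar f, \cdot)$ is uniformly close to $K(f_0, \cdot)$ on $[0, \theta]$, so $K(\bar f, s) \ge -\delta/4$ there and $K(\bar f, \theta) \ge L - \delta$. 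Then I would set $\tilde f := \bar f + \eta\psi$ for a fixed smooth auxiliary path $\psi$ with $\psi(0) = 0$ and $\psi(s) = s^n$ near $0$, where the integer $n$ is chosen so that $n > q = 2/(2-p)$. A direct expansion shows the contribution of $\eta\psi$ to $K$ is of order $\eta^p s^{np+1}/(np+1)$ near $0$, strictly beating the added kinetic term $\eta^2 n^2 s^{2n-1}/(2(2n-1))$ since $np+1 < 2n-1$; cross terms are controlled using $\bar f'(0)=0$ (which is forced by $K(\bar f,s)\ge -\delta/4$ near $0$) together with Cauchy--Schwarz. For $\eta$ sufficiently small this perturbation preserves membership in $A$, pushes $K(\tilde f, \cdot)$ strictly above zero throughout $(0, \theta]$, and keeps $K(\tilde f, \theta) \ge L - \delta$.

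The main obstacle is precisely this perturbation step: because the condition $\theta_0(f)\ge\theta$ permits borderline profiles where $K(f,\cdot)$ grazes zero, one must match the scaling of the ``lift-off'' $\psi$ to the exponent $q$ so that the kinetic penalty $\tfrac12(\psi')^2$ does not overwhelm the branching gain $m\beta|\psi|^p$ at the origin. This is the same mechanism that forces the right-most profile $r$ in \eqref{r(s)} to grow exactly like $s^q$, and it explains why a naive linear or quadratic perturbation of $f_0$ fails for $p$ close to $2$, whereas the polynomial order $n>q$ succeeds.
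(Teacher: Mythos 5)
Your skeleton is the same as the paper's: for each $\delta>0$ exhibit a single $C^2$ path $\tilde f\in A$ with $K(\tilde f,s)>0$ for all $s\in(0,\theta]$ and $K(\tilde f,\theta)\ge L-\delta$, take $B(\tilde f,\varepsilon)\subset A$, apply Proposition \ref{tube_reduction}, and intersect over $\delta_k\downarrow0$. The gap lies precisely in the construction of $\tilde f$, i.e.\ in upgrading the constraint $K(f_0,\cdot)\ge 0$ to strict positivity. Two concrete problems. First, the claim that $K(\bar f,s)\ge-\delta/4$ near $0$ forces $\bar f'(0)=0$ is false: that inequality holds automatically for small $s$ for \emph{any} Lipschitz path (e.g.\ $\bar f(s)=s$ gives $K(\bar f,s)\ge -s/2\ge-\delta/4$ for $s\le\delta/2$), so your control of the cross terms is unsupported. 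Second, and more seriously, your expansion of $K(\bar f+\eta\psi,\cdot)$ is purely local at $s=0$, while positivity is needed on all of $(0,\theta]$. At interior times the mollified path can have a deficit as negative as $-\delta/4$ (and it will be genuinely negative wherever $K(f_0,\cdot)$ touches $0$, which the constraint $\theta_0(f_0)\ge\theta$ permits); since your smoothing tolerance is tied to $\delta$ while you then send $\eta\to0$ to stay inside $A$, the $O(\eta^{p\wedge 1})$ change produced by $\eta\psi$ cannot repair it. Even reordering the quantifiers does not help, because at interior $s$ the sign of the change is governed by the cross term $-\eta\int_0^s\bar f'\psi'$, which need not be favourable; and when the near-optimal $f_0$ is at or near the critical profile $r$ of \eqref{r(s)} (for which $K(r,\cdot)\equiv0$), lifting upward is exactly the wrong move: as the paper shows in Section \ref{sec:optpaths} (consistently with Theorem \ref{rightmost_thm}), any path exceeding $r$ somewhere has $K(\cdot,s)<0$ at some $s$, so $\bar f+\eta\psi$ can violate the constraint outright.

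The paper's repair is multiplicative rather than additive, and it is the one idea your proposal is missing. Since $p<2$, replacing $f$ by $(1-\eta)f$ scales the kinetic term by $(1-\eta)^2$ and the potential term by $(1-\eta)^p>(1-\eta)^2$, so that for every $s$ simultaneously
\[
K\big((1-\eta)f,s\big)=(1-\eta)^2K(f,s)+\big[(1-\eta)^p-(1-\eta)^2\big]\,m\beta\int_0^s|f(u)|^p\,\diffd u ,
\]
which is $\ge(1-\eta)^2K(f,s)\ge0$ and strictly positive wherever $f$ is not identically zero on $[0,s]$, while costing only $O(\eta)$ at $s=\theta$ and keeping the path in $A$ for $\eta$ small. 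This one-line trick makes the whole lift-off construction (and the exponent matching $n>q$) unnecessary; combined with a smoothing step such as your $H_1$ mollification (the paper is terser here, invoking density of $C^2$ functions), it produces the required $\tilde f$ and the rest of your argument goes through as you wrote it. If you insist on an additive repair, you would need the smoothing error to be chosen after $\eta$ and a proof that the perturbation's effect on $K$ is nonnegative uniformly on $(0,\theta]$, which your near-zero expansion does not provide.
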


\begin{proof}
Clearly if $\sup \{  K(f,\theta) : f\in A, \theta_0(f) \ge \theta \}=-\infty$ then there is nothing to prove. Thus it suffices to consider the case when there exists $f\in A$ such that $f\in H_1$ and $\theta\leq\theta_0(f)$. Since $A$ is open, in this case we can in fact find $f\in A$ such that $K(f,s)>0$ $\forall s\in(0,\theta]$ (if $K(f,\phi)=0$ for some $\phi\leq\theta$, just choose $\eta$ small enough that $(1-\eta)f \in A$) and such that $f$ is twice continuously differentiable on $[0,1]$ (the twice continuously differentiable functions are dense in $C[0,1]$). Thus necessarily $\sup\{  K(f,\theta) : f\in A, \theta_0(f) \ge \theta \} >0$, and for any $\delta>0$ we may further assume that $K(f,\theta)>\sup\{  K(f,\theta) : f\in A, \theta_0(f) \ge \theta \}-\delta$. Again since $A$ is open, we may take $\varepsilon$ such that $B(f,\varepsilon)\subset A$;
then clearly for any $T$
\[N_T(f,\varepsilon,\theta)\subset N_T(A,\theta)\]
so by Proposition \ref{tube_reduction} we have
\[\liminf_{T\to\infty}\frac{1}{T^{2q-1}}\log N_T(A,\theta) \geq \sup\{  K(f,\theta) : f\in A, \theta_0(f) \ge \theta \}-\delta\]
almost surely, and by taking a union over $\delta>0$ we may deduce the result.
\end{proof}

\subsection{The upper bound in Theorem \ref{growth_paths_thm}}\label{upper_bound_section}
Our plan is as follows: we recall that we ruled out the possibility of any particles following unusual paths in Lemma \ref{extreme_lem}, which allows us to restrict our attention to a compact set, and hence small balls about sensible paths. We then carry out the task of obtaining a bound along lattice times for balls about such paths in Proposition \ref{upper_prop}. By expanding these balls slightly (using an argument similar to that in Proposition \ref{tube_reduction}) we may then bound the growth in continuous time; this is done in Lemma \ref{upper_lattice_to_cts}, and finally we draw this work together in Proposition \ref{upper_bd_J} to give the bound in continuous time for any closed set $D$.

For simplicity of notation, we break with convention by letting
\[\|f\|_\theta:=\sup_{s\in[0,\theta]}|f(s)|\]
for $f\in C[0,\theta]$ or $f\in C[0,1]$ (on this latter space, $\|\cdot\|_\theta$ is clearly not a norm, but this will not matter to us). We also extend the definition of $N_T(D,\theta)$ to sets $D\subset C[0,\theta]$ in the obvious way, setting
\[N_T(D,\theta) := \{u \in N(\theta T) : \exists f \in D \hbox{ with } X_u(tT) = T^q f(t) \hs \forall t\in[0,\theta ]\}.\]
With a slight abuse of notation, for $D\subset C[0,1]$ and $\theta\in[0,1]$ we define
\[K(D,\theta) := \sup_{f\in D} K(f,\theta).\]

We now attempt to establish an upper bound along lattice times for closed balls about functions outside $F_N$.
Recall the definition of $F_N$ from Lemma \ref{extreme_lem} and that of $R_N(\varepsilon)$ from Proposition \ref{exp_prop}.

\begin{prop}\label{upper_prop}
Fix $N\in \mathbb{N}$. For any closed ball $D=\overline{B(f,\varepsilon)} \subset C[0,1]$ about any $f\not\in F_N$, and any $\theta\in[0,1]$ and lattice times $T_j$, we have
\[\limsup_{j\to\infty}\frac{1}{T_j^{2q-1}}\log|N_{T_j}(D,\theta)| \leq K(D,\theta) + R_N(\varepsilon)\]
almost surely.
\end{prop}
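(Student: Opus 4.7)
The plan is a routine first-moment argument combined with Borel--Cantelli along the lattice. Since $D = \overline{B(f,\varepsilon)}$ is closed and $f\notin F_N$, Proposition~\ref{exp_prop} gives
\[
\limsup_{T\to\infty}\frac{1}{T^{2q-1}}\log\Eb\big[|N_T(D,\theta)|\big] \leq K(D,\theta) + R_N(\varepsilon).
\]
Hence for any $\delta>0$ there exists $T_0=T_0(\delta)$ such that for every $T\geq T_0$,
\[
\Eb\big[|N_T(D,\theta)|\big] \leq \exp\!\Big(T^{2q-1}\big[K(D,\theta) + R_N(\varepsilon) + \delta\big]\Big).
\]

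First, I would apply Markov's inequality with a further $\delta$-buffer in the exponent: for $T\geq T_0$,
\[
\Pb\!\left(|N_T(D,\theta)| > \exp\!\Big(T^{2q-1}\big[K(D,\theta) + R_N(\varepsilon) + 2\delta\big]\Big)\right) \leq \exp\!\big(-\delta T^{2q-1}\big).
\]
Specialising to the lattice times $T_j$, and using that consecutive $T_j$ are bounded away in $j$ (so $T_j^{2q-1}\to\infty$ fast enough that $\sum_j \exp(-\delta T_j^{2q-1})<\infty$, since $2q-1\geq 1$), the Borel--Cantelli lemma yields that almost surely, for all sufficiently large $j$,
\[
\frac{1}{T_j^{2q-1}}\log|N_{T_j}(D,\theta)| \leq K(D,\theta) + R_N(\varepsilon) + 2\delta.
\]
Taking the countable intersection over $\delta=1/n$, $n\in\mathbb{N}$, gives the claimed almost sure upper bound.

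There is no real obstacle: the only care needed is to verify that the expected bound of Proposition~\ref{exp_prop} is genuinely valid for the \emph{closed} ball $D$ (which it is, as stated), and that $N_{T_j}(D,\theta)$ is integer-valued so the Markov bound yields an \emph{a.s.\ finite} upper bound (not merely $<\infty$ in expectation). Both are immediate. The restriction to lattice times is essential at this stage because the probabilities $\exp(-\delta T^{2q-1})$ are only summable along a discrete sequence; the passage to continuous $T$ is precisely the job deferred to Lemma~\ref{upper_lattice_to_cts}, which will inflate $\varepsilon$ slightly and control the fluctuations of descendants between consecutive $T_j$ by the same Brownian-tube tail estimate used in Proposition~\ref{tube_reduction}.
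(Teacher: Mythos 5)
Your proposal is correct and follows essentially the same route as the paper: Proposition~\ref{exp_prop} for the expectation bound, Markov's inequality with a $\delta$-margin, Borel--Cantelli along the lattice times (where $\sum_j e^{-\delta T_j^{2q-1}}<\infty$), and a countable union/intersection over $\delta$ to conclude. The paper's proof is the same argument, so there is nothing to add.
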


\begin{proof}
Proposition \ref{exp_prop} tells us that
\[\limsup_{T\to\infty}\frac{1}{T^{2q-1}}\log\Eb\big[|N_T(D,\theta)|\big]\leq K(D,\theta) + R_N(\varepsilon).\]
Applying Markov's inequality, for any $\delta>0$ and $p\in[0,2)$ we get
\begin{multline*}
\limsup_{T\to\infty}\frac{1}{T^{2q-1}}\log \Pb\big(|N_T(D,\theta)| \geq e^{K(D,\theta)T^{2q-1} + R_N(\varepsilon)T^{2q-1} + \delta T^{2q-1}}\big)\\
\leq \limsup_{T\to\infty}\frac{1}{T^{2q-1}}\log \frac{\Eb\big[|N_T(D,\theta)|\big]}{e^{K(D,\theta)T^{2q-1} + R_N(\varepsilon)T^{2q-1} + \delta T^{2q-1}}} \leq -\delta
\end{multline*}
so that for lattice times $T_1, T_2, \ldots$ we have
\[\sum_{j=1}^\infty \Pb\big(|N_{T_j}(D,\theta)| \geq e^{K(D,\theta)T_j^{2q-1} + R_N(\varepsilon)T_j^{2q-1} + \delta T_j^{2q-1}}\big) < \infty\]
and hence by the Borel-Cantelli lemma
\[\Pb\left(\limsup_{j\to\infty}\frac{1}{T_j^{2q-1}}\log|N_{T_j}(D,\theta)| \geq K(D,\theta)+R_N(\varepsilon)+\delta\right)=0.\]
Taking a union over $\delta>0$ now gives the result.
\end{proof}

We now check that an upper bound holds in continuous time. For $\varepsilon>0$ and $D\subset C[0,1]$, define
\[D^\varepsilon := \{f\in C[0,1] : \exists g\in D \hbox{ with
} \|f-g\|\leq\varepsilon\}.\]

\begin{lem}\label{upper_lattice_to_cts}
If $D=\overline{B(f,\varepsilon)}\subset C[0,1]$ for some $f\not\in F_N$, then
\[\limsup_{T\to\infty}\frac{1}{T^{2q-1}}\log|N_T(D,\theta)|\leq K(D^{\varepsilon},\theta) + R_N(2\varepsilon)\]
almost surely.
\end{lem}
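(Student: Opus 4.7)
The strategy is to reduce continuous-time $T$ to lattice times via a tube-expansion argument that inverts the one used in Proposition \ref{tube_reduction}.

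\textbf{First}, I would apply Proposition \ref{upper_prop} to the closed ball $D^\varepsilon = \overline{B(f,2\varepsilon)}$, whose centre $f$ still lies outside $F_N$, obtaining for any lattice sequence $(T_j)_{j\ge 1}$ that
\[
\limsup_{j\to\infty}\frac{1}{T_j^{2q-1}}\log|N_{T_j}(D^\varepsilon,\theta)| \;\le\; K(D^\varepsilon,\theta)+R_N(2\varepsilon)\qquad\text{almost surely.}
\]

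\textbf{Second}, I would show that for all sufficiently large $j$ and every $T\in[T_j,T_{j+1}]$, the ancestor at time $\theta T_j$ of any particle $u\in N_T(D,\theta)$ lies in $N_{T_j}(D^\varepsilon,\theta)$. Given $g\in D$ with $X_u(sT)=T^q g(s)$ for $s\in[0,\theta]$, the rescaled ancestor path is $h(r):=X_v(rT_j)/T_j^q = (T/T_j)^q g(rT_j/T)$, and direct manipulation yields
\[
|h(r)-f(r)| \;\le\; (T/T_j)^q\varepsilon + \bigl((T/T_j)^q-1\bigr)\|f\|_\theta + |f(rT_j/T)-f(r)|.
\]
Since $(T/T_j)^q\to 1$ uniformly for $T\in[T_j,T_{j+1}]$ and $f\notin F_N$ carries a built-in uniform modulus of continuity (from the very definition of $F_N$), the right-hand side is at most $2\varepsilon$ once $j$ is large, so after extending $h$ to $[0,1]$ in any manner that remains within $2\varepsilon$ of $f$ we conclude $v\in N_{T_j}(D^\varepsilon,\theta)$.

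\textbf{Third}, I would control the fluctuations of $|N_T(D,\theta)|$ across $T\in[T_j,T_{j+1}]$ almost surely. Any such particle either already belonged to $N_{T_j}(D^\varepsilon,\theta)$ and has not branched since $\theta T_j$, or else was born as a descendant of some $v\in N_{T_j}(D^\varepsilon,\theta)$ inside the spatial region $[-CT_{j+1}^q,\,CT_{j+1}^q]$ with $C:=\|f\|_\theta+2\varepsilon$, during $[\theta T_j,\theta T_{j+1}]$. Hence
\[
\sup_{T\in[T_j,T_{j+1}]}|N_T(D,\theta)| \;\le\; |N_{T_j}(D^\varepsilon,\theta)| + \sum_{v\in N_{T_j}(D^\varepsilon,\theta)} B_{v,j},
\]
where $B_{v,j}$ counts the total number of in-region births in the descendant tree of $v$. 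The many-to-one lemma bounds the conditional expectation $\Eb[\sum_v B_{v,j}\mid \Fg_{\theta T_j}]\le |N_{T_j}(D^\varepsilon,\theta)|\,e^{O(T_j^{qp})}$. Since $qp=2q-2<2q-1$, a conditional Markov inequality followed by Borel--Cantelli along the lattice yields that, for any $\delta>0$, almost surely for large $j$
\[
\sup_{T\in[T_j,T_{j+1}]}|N_T(D,\theta)| \;\le\; 2\,|N_{T_j}(D^\varepsilon,\theta)|\,e^{\delta T_j^{2q-1}}.
\]
Combining the three steps, letting $\delta\to 0$ along a countable sequence, and using $T^{2q-1}/T_j^{2q-1}\to 1$ for $T\in[T_j,T_{j+1}]$, gives the claimed bound. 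The main obstacle is Step 3: a direct union bound over the continuum of $T\in[T_j,T_{j+1}]$ is hopeless, but the key observation rescuing us is that the in-region branching rate scales only as $T^{qp}=T^{2q-2}$, one order below the logarithmic scale $T^{2q-1}$ at which we measure growth, so the aggregated subexponential fluctuations vanish on our scale.
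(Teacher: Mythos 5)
Your proposal is correct and follows essentially the same route as the paper: apply Proposition \ref{upper_prop} to $D^\varepsilon=\overline{B(f,2\varepsilon)}$ along lattice times, show via the same triangle-inequality/modulus-of-continuity estimate that ancestors at time $\theta T_j$ of tube particles at any $T\in[T_j,T_{j+1}]$ lie in $N_{T_j}(D^\varepsilon,\theta)$, and then control the in-between fluctuations by a conditional first-moment (many-to-one) bound of order $e^{O(T_j^{pq})}=e^{O(T_j^{2q-2})}$ followed by Markov and Borel--Cantelli. Your repackaging of the fluctuation bound via in-region births rather than all descendants born in the window is only cosmetic (the many-to-one expectation still requires the same splitting over the spine's maximal displacement, since the indicator does not cap the exponential weight), so the argument matches the paper's proof in all essential respects.
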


\begin{proof}
First note that for lattice times $T_1,T_2,\ldots$,
\begin{multline*}
\Pb\left(\limsup_{T\to\infty}\frac{1}{T^{2q-1}}\log|N_T(D,\theta)|>K(D^{\varepsilon},\theta)+R_N(2\varepsilon)+\delta\right)\\
\leq \Pb\left(\limsup_{j\to\infty}\frac{1}{T_j^{2q-1}}\log|N_{T_j}(D^\varepsilon,\theta)| > K(D^\varepsilon,\theta)+R_N(2\varepsilon)\right)\\
+ \Pb\left(\limsup_{j\to\infty}\frac{1}{T_j^{2q-1}}\log\sup_{T\in[T_j,T_{j+1}]}\frac{|N_T(D,\theta)|}{|N_{T_j}(D^\varepsilon,\theta)|} > \delta\right).
\end{multline*}
Clearly $D^\varepsilon = \overline{B(f,2\varepsilon)}$, so immediately by Proposition \ref{upper_prop},
\[\Pb\left(\limsup_{j\to\infty}\frac{1}{T_j^{2q-1}}\log|N_{T_j}(D^\varepsilon,\theta)| > K(D^\varepsilon,\theta)+R_N(2\varepsilon)\right) = 0\]
and we may concentrate on the last term. We claim that for $j$ large enough, provided that $T_1\leq1$, for any $T\in[T_j,T_{j+1}]$ we have
\[u\in N_T(D,\theta) \Rightarrow \exists v< u \hbox{ with } v\in N_{T_j}(D^\varepsilon,\theta).\]
Indeed, if $u\in N_T(D,\theta)$ then for any $S\leq \theta T_j$,
\begin{align*}
|X_u(S) - T_j^q f(S/T_j)| &\leq |X_u(S) - T^q f\left(S/T\right)| + |T_j^q f\left(S/T_j\right) - T^q f\left(S/T_j\right)|\\
&\hspace{50mm} + T^q |f\left(S/T_j\right) - f\left(S/T\right)|\\
&\leq T^q \varepsilon + \|f\|_\theta (T_{j+1}^q - T_j^q) + T^q\sup_{\begin{subarray}{c}x,y\in[0,\theta]\\|x-y|\leq 1/T_j \end{subarray}}|f(x)-f(y)|
\end{align*}
which is smaller than $2\varepsilon T_j^q$ for large $j$ since $f$ is absolutely continuous.

We deduce that for large $j$ every particle in $N_T(D,\theta)$ for any $T\in[T_j,T_{j+1}]$ has an ancestor in $N_{T_j}(D^\varepsilon,\theta)$. We now use this fact to ensure that $N_T(D,\theta)$ cannot increase dramatically between times $T_j$ and $T_{j+1}$.

We temporarily need some more notation. For $T>S\geq 0$ and $u\in N(S)$, let $N(u,S,T)$ be the set of descendants of $u$ born between times $S$ and $T$. Also let $\Pt_x$ be the translation of $\Pt$ under which we start with one particle at $x$ rather than at the origin.
Then, using the Markov property and the many-to-one lemma,
for $j$ large enough,
\begin{align*}
&\Eb\left[\left.\sup_{T\in[T_j,T_{j+1}]}|N_T(D,\theta)| \right|\Fg_{\theta T_j}\right]\\
&\leq \Eb\left[\left. \sum_{u\in N_{T_j}(D^\varepsilon,\theta)} |N(u,\theta T_j,\theta T_{j+1})| \right| \Fg_{\theta T_j}\right]\\
&\leq \sum_{u\in N_{T_j}(D^\varepsilon,\theta)} \Eb_{X_u(\theta T_j)}\left[ |N(\theta T_1)| \right]\\
&= \sum_{u\in N_{T_j}(D^\varepsilon,\theta)} \Eb_{X_u(\theta T_j)}\left[ e^{m\beta \int_0^{\theta T_1} |\xi_s|^p \diffd s} \right]\\
&\leq \sum_{u\in N_{T_j}(D^\varepsilon, \theta)} \sum_{k\geq0} e^{m\beta\theta T_1(|X_u(\theta T_j)|+k+1)^p} \Pb_{X_u(\theta T_j)} \left(\sup_{S\in[0,\theta T_1]} |\xi_S - \xi_0| \in[k,k+1]\right)\\
&\leq |N_{T_j}(D^\varepsilon,\theta)| \sum_{k\geq0} e^{m\beta\theta
T_1(T_j^q(\|f\|_\theta+2\varepsilon)+k+1)^p} \frac{4e^{-k^2 / (2\theta
T_1)}}{\sqrt{2\pi\theta T_1}};
\end{align*}
since $p<2$ this sum converges, giving
\[\Eb\left[\left.\sup_{T\in[T_j,T_{j+1}]}|N_T(D,\theta)| \right|\Fg_{\theta T_j}\right] \leq |N_{T_j}(D^\varepsilon,\theta)|e^{O(T_j^{pq})}\]
where the $O(T_j^{pq})$ is deterministic.
But $pq=2q-2$ and by Markov's inequality
\begin{align*}
&\Pb\left(\sup_{T\in[T_j,T_{j+1}]}\frac{|N_T(D,\theta)|}{|N_{T_j}(D^\varepsilon,\theta)|} > \exp\left(\delta T_j^{2q-1}\right) \right)\\
&\leq \Eb\left[\frac{\Eb\left[\left.\sup_{T\in[T_j,T_{j+1}]}|N_T(D,\theta)| \right|\Fg_{\theta T_j}\right]}{|N_{T_j}(D^\varepsilon,\theta)|}\right]\exp(-\delta T_j^{2q-1})\\
&\leq \exp(O(T_j^{2q-2}) - \delta T_j^{2q-1}).
\end{align*}
Thus we may apply Borel-Cantelli to see that
\[\Pb\left(\limsup_{j\to\infty}\frac{1}{T_j^{2q-1}}\log\sup_{T\in[T_j,T_{j+1}]}\frac{|N_T(D,\theta)|}{|N_{T_j}(D^\varepsilon,\theta)|} > \delta\right)=0.\]
Again taking a union over $\delta>0$ gives the result.
\end{proof}

We are now in a position to give an upper bound for any closed set $D$ in continuous time. This upper bound is not quite what we asked for in Theorem \ref{growth_paths_thm}, but the final step --- truncating $K$ at $\theta_0$ --- will be carried out in Corollary \ref{upper_bd}.

\begin{prop}\label{upper_bd_J}
If $D\subset C[0,1]$ is closed, then for any $\theta\in[0,1]$
\[\limsup_{T\to\infty}\frac{1}{T^{2q-1}}\log|N_T(D,\theta)|\leq K(D,\theta)\]
almost surely.
\end{prop}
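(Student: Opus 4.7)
The plan is to bootstrap the ball-around-a-smooth-path bound of Lemma~\ref{upper_lattice_to_cts} to an arbitrary closed $D$, by first discarding erratic paths via Lemma~\ref{extreme_lem} and then covering the remainder by finitely many balls whose combined growth rate barely exceeds $K(D,\theta)$. Throughout, I would fix $\delta>0$ and aim to show $\limsup_T T^{1-2q}\log |N_T(D,\theta)|\leq K(D,\theta)+\delta$ a.s., then let $\delta\downarrow 0$ along a countable sequence.

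Since every particle starts at $0$, one may replace $D$ by $D\cap C_0[0,1]$ without loss. By Lemma~\ref{extreme_lem} choose $N$ so large that $|N_T(F_N,\theta)|\leq \exp((K(D,\theta)+\delta)T^{2q-1})$ eventually a.s.\ (if $K(D,\theta)=-\infty$, use any fixed negative exponent in its place). The set $D_N := D\setminus F_N$ then lies in $C_0[0,1]\setminus F_N$, which is totally bounded by Lemma~\ref{totally_bdd_lem}. Using the upper-semicontinuity statement of Lemma~\ref{semicont_lem} together with $R_N(\varepsilon)\to 0$ as $\varepsilon\to 0$ (from Proposition~\ref{exp_prop}), shrink $\varepsilon>0$ so that
\[K(D^{2\varepsilon},\theta) + R_N(2\varepsilon) \leq K(D,\theta) + \delta.\]

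Total boundedness of $D_N$ gives finitely many $f_1,\ldots,f_n \in D_N$, in particular $f_i\notin F_N$, with $D_N\subset \bigcup_i \overline{B(f_i,\varepsilon)}$. Any $u\in N_T(D,\theta)$ is witnessed by some $f\in D$ lying either in $F_N$ or in one of the balls $\overline{B(f_i,\varepsilon)}$, so
\[|N_T(D,\theta)| \leq |N_T(F_N,\theta)| + \sum_{i=1}^n |N_T(\overline{B(f_i,\varepsilon)},\theta)|.\]
Applying Lemma~\ref{upper_lattice_to_cts} to each ball bounds its a.s.\ growth rate by $K(\overline{B(f_i,2\varepsilon)},\theta) + R_N(2\varepsilon)\leq K(D^{2\varepsilon},\theta)+R_N(2\varepsilon)\leq K(D,\theta)+\delta$. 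Since a finite sum of exponentials grows at the rate of its largest summand, combining with the $|N_T(F_N,\theta)|$ bound completes the proof. The only subtle point is matching the $2\varepsilon$-thickening produced by Lemma~\ref{upper_lattice_to_cts} with the $\varepsilon$-radius cover from total boundedness, but this is exactly what the upper-semicontinuity of $K$ in Lemma~\ref{semicont_lem} lets us arrange. No genuinely new ingredients are required: all the hard work has already been done in the earlier lemmas, and this proposition is purely a glueing step.
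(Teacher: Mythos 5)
Your argument is correct and takes essentially the same route as the paper's own proof: discard the erratic paths $F_N$ via Lemma~\ref{extreme_lem}, cover $D\setminus F_N$ by finitely many $\varepsilon$-balls using Lemma~\ref{totally_bdd_lem}, control the thickened rate with Lemma~\ref{semicont_lem} together with $R_N(2\varepsilon)\to 0$, apply Lemma~\ref{upper_lattice_to_cts} to each ball, and finally let $\delta\downarrow 0$. Your bookkeeping (centres $f_i\in D\setminus F_N$ and the $D^{2\varepsilon}$-thickening) is, if anything, slightly tidier than the paper's, but it is the same glueing argument.
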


\begin{proof}
Clearly (since our first particle starts from 0) $N_T(D\setminus C_0[0,1],\theta)=\emptyset$ for all $T$, so we may assume without loss of generality that $D\subset C_0[0,1]$. Now fix $\delta>0$ and choose $N$ (by Lemma \ref{extreme_lem}) such that
\[\limsup_{T\to\infty}\frac{1}{T^{2q-1}}\log |N_T(F_N,\theta)|= -\infty, \text{ a.s.}\]
By Lemma \ref{semicont_lem} and the fact that $R_N(2\varepsilon)\to 0$ as
$\varepsilon\to 0$, we may choose $\varepsilon>0$ such that
$K(D^\varepsilon,\theta) + R_N(2\varepsilon) < K(D,\theta)+\delta$. Then,
by Lemma \ref{totally_bdd_lem}, for any $N$ and some $n$ (depending on $N$)
and $f_k \in C[0,1]\setminus F_N$, $k=1,2,\ldots,n$,
\begin{align*}
&\Pb\left(\limsup_{T\to\infty}\frac{1}{T^{2q-1}}\log|N_T(D,\theta)|>K(D,\theta)+\delta\right)\\
&\leq \Pb\left(\limsup_{T\to\infty}\frac{1}{T^{2q-1}}\log|N_T(F_N,\theta)| > K(D,\theta)+\delta\right)\\
&\hspace{15mm} + \sum_{k=1}^n \Pb\left(\limsup_{T\to\infty}\frac{1}{T^{2q-1}}\log|N_T(f_k,\varepsilon,\theta)| > K(D^\varepsilon,\theta) + R_N(2\varepsilon)\right).
\end{align*}
By our choice of $N$, the first term on the right-hand side is zero, and by Lemma \ref{upper_lattice_to_cts} all of the terms in the sum are also zero. As usual we take a union over $\delta>0$ to complete the proof.
\end{proof}

\begin{cor}\label{upper_bd}
For any closed set $D\subset C[0,1]$ and $\theta\in[0,1]$, we have
\[\limsup_{T\to\infty}\frac{1}{T^{2q-1}}\log|N_T(D,\theta)| \leq
\sup\{K(f,\theta) : f\in D, \theta_0(f)\ge \theta \}\]
almost surely.
\end{cor}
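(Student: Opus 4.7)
The plan is to refine the conclusion of Proposition \ref{upper_bd_J}, which already yields $\limsup T^{1-2q}\log|N_T(D,\theta)|\le \sup_{f\in D}K(f,\theta)$, by excising the contribution of paths $f\in D$ with $\theta_0(f)<\theta$. The intuition is that if $K(f,s_f)<0$ for some $s_f<\theta$, Proposition \ref{exp_prop} makes the expected count of particles in a small tube around $f$ at time $s_f T$ exponentially small; by Markov and Borel-Cantelli the integer-valued count must then actually vanish for all large $T$, and a particle in the tube around $f$ at the later time $\theta T$ must have had an ancestor in the tube at the earlier time $s_f T$.

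Set $\tilde K := \sup\{K(f,\theta):f\in D,\;\theta_0(f)\ge\theta\}$ and fix $\delta>0$. First I would use Lemma \ref{extreme_lem} to pick $N$ with $T^{1-2q}\log|N_T(F_N,\theta)|\to-\infty$ a.s., reducing the task to bounding $|N_T(D\cap(C_0[0,1]\setminus F_N),\theta)|$. Since $F_N$ is open (a direct check from its definition) and $C_0[0,1]\setminus F_N$ is totally bounded by Lemma \ref{totally_bdd_lem}, this restricted set is compact. Around each $f$ in this compact set I would, using upper semicontinuity of $K(\cdot,s)$ (as established inside Lemma \ref{semicont_lem}) and continuity of $R_N$ at $0$, select $\varepsilon_f>0$ in one of two ways: if $\theta_0(f)\ge\theta$ (a \emph{good} ball) take $\varepsilon_f$ so that $K(\overline{B(f,2\varepsilon_f)},\theta)+R_N(2\varepsilon_f)\le K(f,\theta)+\delta\le \tilde K+\delta$; if $\theta_0(f)<\theta$ (an \emph{extinct} ball) pick $s_f\in(0,\theta)$ with $K(f,s_f)<0$ and take $\varepsilon_f$ small enough that $K(\overline{B(f,2\varepsilon_f)},s_f)+R_N(2\varepsilon_f)<0$. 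Compactness then produces a finite subcover $\{B(f_k,\varepsilon_{f_k})\}_{k=1}^M$.

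For good balls, Lemma \ref{upper_lattice_to_cts} immediately gives $\limsup T^{1-2q}\log|N_T(\overline{B(f_k,\varepsilon_{f_k})},\theta)|\le \tilde K+\delta$ a.s. For extinct balls, I would apply Lemma \ref{upper_lattice_to_cts} with parameter $s_{f_k}$ in place of $\theta$ (the hypothesis $f_k\notin F_N$ for parameter $\theta$ is stronger than the analogous hypothesis for $s_{f_k}<\theta$, so it carries over), obtaining a strictly negative limsup for $T^{1-2q}\log|N_T(\overline{B(f_k,\varepsilon_{f_k})},s_{f_k})|$; since this is the logarithm of a non-negative integer, the count must in fact equal zero for all $T$ large enough, a.s. Then any particle in $N_T(\overline{B(f_k,\varepsilon_{f_k})},\theta)$ has its ancestor at time $s_{f_k}T$ in $N_T(\overline{B(f_k,\varepsilon_{f_k})},s_{f_k})$, so emptiness at the intermediate time propagates forward and forces $N_T(\overline{B(f_k,\varepsilon_{f_k})},\theta)=\emptyset$ for all large $T$ too.

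Combining via $|N_T(D,\theta)|\le |N_T(F_N,\theta)|+\sum_{k=1}^M|N_T(\overline{B(f_k,\varepsilon_{f_k})},\theta)|$, taking $T^{1-2q}\log$ and limsups, and finally letting $\delta\to 0$ finishes the argument. The main obstacle is the jump from an exponentially-small expected count to genuine emptiness of $N_T(\cdot,s_{f_k})$: this step is not quantitative and uses crucially the integer-valued nature of the particle count together with the elementary ancestry observation that extinction at time $s_{f_k}T$ automatically propagates to all later times.
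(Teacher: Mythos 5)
Your argument is correct, but it is organised quite differently from the paper's own proof of this corollary. The paper's proof is a two-line global argument: it takes the bound $\limsup T^{1-2q}\log|N_T(D,\theta)|\le \sup_{f\in D}K(f,\theta)$ from Proposition \ref{upper_bd_J}, observes that integer-valuedness turns any strictly negative normalised $\log$ into $-\infty$, and notes that emptiness of $N_T(D,\phi)$ propagates to $N_T(D,\theta)$ for $\phi\le\theta$; it then applies this to the \emph{whole} set $D$, i.e.\ to the case where $\sup_{f\in D}K(f,\phi)<0$ at some single time $\phi$. You instead re-run the covering machinery that sits inside Proposition \ref{upper_bd_J} (Lemmas \ref{extreme_lem}, \ref{totally_bdd_lem}, \ref{semicont_lem}, \ref{upper_lattice_to_cts}), split the finitely many covering balls into ``good'' and ``extinct'' ones, and apply the integrality-plus-ancestry argument ball by ball with a ball-specific extinction time $s_{f_k}$. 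The two key mechanisms (an exponentially small bound forcing an integer count to vanish, and forward propagation of emptiness) are exactly the paper's, but your localisation is a genuine refinement: the paper's global statement, read literally, only yields the truncated supremum when the entire set dies at a common time, whereas your decomposition handles closed sets that mix surviving paths with paths whose extinction times vary, which is the generic situation. The price you pay is repeating the compactness work — including the extra (correct, and easily checked) observation that $F_N$ is open so that $D\cap(C_0[0,1]\setminus F_N)$ is compact, which you need because your radii $\varepsilon_f$ vary with the centre, while the paper's Proposition \ref{upper_bd_J} gets away with a single radius and total boundedness. Your uses of Lemma \ref{semicont_lem} for singletons, of $R_N(2\varepsilon)\to0$, and of Lemma \ref{upper_lattice_to_cts} at the earlier time $s_{f_k}$ (noting that $F_N$ for parameter $\theta$ contains $F_N$ for $s_{f_k}\le\theta$, and that $R_N$ does not depend on the time parameter) are all legitimate, so the proposal stands as a complete, somewhat more detailed proof of the statement.
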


\begin{proof}
Since $|N_T(D,\theta)|$ is integer valued,
\[\frac{1}{T^{2q-1}}\log |N_T(D,\theta)| < 0 \hs \Rightarrow \hs \frac{1}{T^{2q-1}}\log |N_T(D,\theta)| = -\infty.\]
Thus, by Proposition \ref{upper_bd_J}, if $K(D,\theta)<0$ then
\[\Pb\left(\limsup_{T\to\infty}\frac{1}{T^{2q-1}}\log|N_T(D,\theta)| > -\infty\right)=0.\]
Further, clearly for $\phi\leq\theta$ and any $T\geq0$, if $N_T(D,\phi)=\emptyset$ then necessarily $N_T(D,\theta)=\emptyset$. Thus if there exists $\phi\leq\theta$ with $K(D,\phi)<0$, then
\[\Pb\left(\limsup_{T\to\infty}\frac{1}{T^{2q-1}}\log|N_T(D,\theta)| > -\infty\right)=0\]
which completes the proof.
\end{proof}

\begin{proof}[Proof of Theorem \ref{growth_paths_thm}]
Combining Corollary \ref{lower_bd} with Corollary \ref{upper_bd} completes the proof.
\end{proof}

\begin{proof}[Proof of Corollary \ref{C:corol nas}]
Corollary \ref{C:corol nas} is proven in the same way as Corollary \ref{cor_unconstrained} (it is easily checked that the limsup and liminf of Theorem \ref{unconstrained_thm} agree on sets of the form $D_{z,\epsilon} := \{ f\in C[0,1] , |f(1)-z| \le \varepsilon \}$ and $A_{z,\epsilon} := \{ f\in C[0,1] , |f(1)-z| < \varepsilon \}$ for $\varepsilon >0$).
\end{proof}

\section{Proof of Theorem \ref{large_devs_thm}}\label{large_devs_sec}
There is much overlap in the proofs of Theorems \ref{large_devs_thm} and \ref{growth_paths_thm}, and so we will leave out many of the details and refer to earlier sections. Our task is also made easier by the fact that we are trying to prove a statement about one-dimensional distributions rather than the full sample paths of the BBM.

\begin{proof}[Proof of upper bound in Theorem \ref{large_devs_thm}]
Lemma \ref{extreme_lem} tells us that for any $k>0$, we may choose $N\in\mathbb{N}$ such that
\begin{equation}
\P(N_T(F_N,\theta)\neq\emptyset) \leq \exp\left(-kT^{2q-1}\right)
\label{P<ekT}
\end{equation}
(recall that for large $N$, $F_N$ was a set of extreme paths that were very difficult for particles to follow). Now, in Proposition \ref{exp_prop} we saw that for any $f\in D\setminus F_N$ and any $\phi\in[0,1]$,
\[\limsup_{T\to\infty}\frac{1}{T^{2q-1}}\log\Eb\big[|N_T(f,\varepsilon,\phi)|\big]\leq K(D^\varepsilon,\phi) + R_N(\varepsilon)\]
where for each $N$, $R_N(\varepsilon)\to0$ as $\varepsilon\to 0$.

Fix $\delta>0$, choose $k>0$ such that $-k < \inf_{\phi<\theta}K(D,\phi)$ and
$N$ large enough that \eqref{P<ekT} is verified.
By a similar argument to that in Lemma \ref{semicont_lem} (the upper-semicontinuity carries over easily to the function $f\mapsto\inf_{\phi\leq\theta} K(f,\phi)$) we may then choose $\varepsilon$ small enough that
\[\inf_{\phi\leq\theta}K(D^\varepsilon,\phi) + R_N(\varepsilon) \leq \inf_{\phi\leq\theta} K(D,\phi) + \delta.\]
Using compactness (see Lemma \ref{totally_bdd_lem}), we can choose
$n\in\mathbb{N}$ and $f_1,f_2,\ldots,f_n \in D\setminus F_N$ such that
\[D\subset F_N\cup B(f_1,\varepsilon)\cup\ldots\cup B(f_n,\varepsilon)\subset D^\varepsilon.\]
Now for $\phi\leq\theta$ and any set $B$,
\[\{N_T(B,\phi)=\emptyset\} \subseteq \{ N_T(B,\theta)=\emptyset\}\]
so, for $T$ large enough (depending on the $f_1$, \ldots, $f_n$)
\begin{align*}
\P(N_T(D,\theta)\neq\emptyset)&\leq \P(N_T(F_N,\theta)\neq\emptyset)
+ \sum_{i=1}^n \P(N_T(f_i,\varepsilon,\theta)\neq\emptyset)\\
&\leq \P(N_T(F_N,\theta)\neq\emptyset)
+ \sum_{i=1}^n\inf_{\phi\leq\theta}\P(N_T(f_i,\varepsilon,\phi)\neq\emptyset)\\
&\leq e^{-kT^{2q-1}} + \sum_{i=1}^n\inf_{\phi\leq\theta}\E[|N_T(f_i,\varepsilon,\phi)|]\\
&\leq e^{-kT^{2q-1}} + \sum_{i=1}^n\inf_{\phi\leq\theta}e^{K(D^\varepsilon,\phi)T^{2q-1} + R_N(\varepsilon)T^{2q-1}}\\
&\leq (n+1)e^{\delta T^{2q-1}} \inf_{\phi\leq\theta}e^{K(D,\phi) T^{2q-1}}.
\end{align*}
Taking logarithms, dividing by $T^{2q-1}$ and letting $\delta\downarrow 0$ gives the desired result.
\end{proof}

\begin{proof}[Proof of lower bound in Theorem \ref{large_devs_thm}]
First note that it suffices to consider the case $A = B(f,\varepsilon)$ for $f\in C^2[0,1]$. Now
\begin{align*}
\P(N_T(A,\theta)\neq\emptyset) &= \P(Z_T(\theta T) > 0)\\
&=\Qt_T\left[\frac{1}{Z_T(\theta T)}\right]\\
&\geq \Qt_T\left[\frac{1}{\Qt_T[Z_T(\theta T)|\Gt_T]}\right].
\end{align*}
As in the proof of Lemma \ref{spine_domination} we can use Lemmas \ref{scaled_int_lem} and \ref{xp_lem} to bound the spine decomposition; for any $\delta>0$ we may choose $\varepsilon$ small enough that
\begin{align*}
\Qt_T[Z_T(\theta T) | \Gt_T] &\leq \sum_{u < \xi_{\theta T}} A_u e^{-\delta \int_0^{S_u} |\xi_s|^p
\diffd s - K(f,S_u/T)T^{2q-1} + 2\delta T^{2q-1}}\\
&\leq \sum_{u < \xi_{\theta T}} A_u e^{-\delta \int_0^{S_u} |\xi_s|^p
\diffd s - \inf_{\phi<\theta}K(f,\phi)T^{2q-1} + 2\delta T^{2q-1}}.
\end{align*}
Thus (since $\delta > 0$ was arbitrary) it certainly suffices to show that
\[\liminf_{T\to\infty} \Qt_T\left[\frac{1}{\sum_{u<\xi_{\theta T}} A_u e^{-\delta \int_0^{S_u}|\xi_s|^p \diffd s}}\right] > 0.\]
But using the auxiliary random variables $A_1, A_2, \ldots$ and $S_1, S_2, \ldots$ from the proof of Proposition \ref{UI} we have
\[
\Qt_T\left(
\limsup_{T\to\infty} \sum_{u<\xi_{\theta T}} A_u e^{-\delta \int_0^{S_u}|\xi_s|^p \diffd s}
<\infty\right)
 \geq
Q\left(
\sum_{n=1}^\infty A_n e^{-\delta \mathcal{S}_n} < \infty
\right) = 1.
\]
We are now done by Fatou's lemma.
\end{proof}

\section{Optimal paths: proofs of Theorems \ref{unconstrained_thm} and \ref{constrained_thm}}\label{sec:optpaths}

We start with Theorem~\ref{constrained_thm}.
We recall our optimisation problem.
\[
K(f,t) := \begin{cases}
\displaystyle\int_0^t \Big[ m\beta |f(s)|^p - \frac{1}{2} f'(s)^2 \Big]\diffd s   & \text{ if $f\in H_1$},
\\ - \infty & \text{otherwise}.\end{cases}
\]
where the class of functions in which we would like to optimise is
$$
H_1:=\{f:[0,1]\to\R : f(t)=\int_0^t h(s)\,\diffd s, h\in L^2[0,1]\}.
$$

\noindent\textbf{Optimisation Problem.} For $z\in\R$, find
$$
\sup_{f\in H_1, f(1)=z} K(f,1),
$$
subject to
\begin{equation}\label{eqn:constraint}
K(f,\theta)\geq 0 \quad \forall \theta\in[0,1].
\end{equation}

By symmetry it is sufficient to consider $z\ge 0$. Furthermore, if $g\in
H_1$ is any path satisfying the
constraint~\eqref{eqn:constraint}, then $|g|\in H_1$ also satisfies
the constraint, has the same value
at the end point, and $K(g,1)=K(|g|,1)$. So, without loss
of generality, we can assume for existence that
$g\geq 0$ and drop the modulus sign from the definition of $K$.

\medskip

\noindent\textbf{Optimal paths.}
For each $s \in [0,1]$ define a path $g \in H_1$ as follows
\begin{enumerate}
\item For all $t\in[0,s)$, we set $g(t)=r(t)$, the scaled position of the right-most particle (see (\ref{r(s)})). Otherwise said, $g$ is the solution to
$$
\frac12g'(t)^2=m\beta g(t)^p,\quad g(0)=0, \qquad t \le s.
$$

\item For $t\in[{s},1]$, $g$ satisfies
$$
g''(t)+m\beta pg^{p-1}(t)=0,
$$
with initial condition specified by the fact that $g$ and $g'$ are continuous at ${s}.$
\end{enumerate}

\medskip

We aim to show that, for each $z \in[0,\bar z]$
the above optimisation problem has a unique solution given
by the unique $g_z$ constructed as above by picking the unique $s= s_z\in[0,1]$ such that $g_z(1)=z.$ 


\medskip

The strategy for our proof is that we will first show that $g_z$ defined
above is the unique solution to the
optimisation problem in the smaller class $C^2_{\text{piecewise}}\subset H_1$ of functions which are piecewise $C^2$ on $[0,1]$ (note that the optimal functions $g_z$ have discontinuous second
derivative at $s_z$).
We will do this by exhibiting
a series of properties which a solution to the optimisation problem,
\emph{if one exists}, must
satisfy. The unique function satisfying all of these properties will
be our $g_z$. We will then
show that $g_z$ is also the optimum in the full class of functions $H_1$
by showing that if there
existed a better function in the larger class, there would also be a
better function in our restricted
class, and so obtain a contradiction.

For notational convenience, in places where it will not cause confusion, we will write $g$ for $g_z$.


\begin{lem}\label{lem:derivative_cts}
Any optimal (in $C^2_{\text{piecewise}}$) path $g$ is in fact $C^1.$
\end{lem}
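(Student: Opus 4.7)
The plan is to argue by contradiction via the classical Weierstrass--Erdmann corner condition. Suppose $g$ is optimal in $C^2_{\text{piecewise}}$ but has a corner at some interior point $t_0\in(0,1)$, with $g'(t_0^-)=:a\ne b:=g'(t_0^+)$. I would construct a local smoothing perturbation $\tilde g_\varepsilon$ that equals $g$ outside a small window $[t_0-\varepsilon,t_0+\varepsilon]$ and, on that window, equals the straight line joining $g(t_0-\varepsilon)$ and $g(t_0+\varepsilon)$. Note $\tilde g_\varepsilon(1)=g(1)=z$ and $\tilde g_\varepsilon\in C^2_{\text{piecewise}}$ automatically.

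The core computation exploits that, amongst functions with prescribed endpoints on $[t_0-\varepsilon,t_0+\varepsilon]$, the affine interpolant uniquely minimises $\int f'^2$. Expanding to leading order in $\varepsilon$ yields
\[\int_{t_0-\varepsilon}^{t_0+\varepsilon}\bigl(g'(s)^2-\tilde g_\varepsilon'(s)^2\bigr)\diffd s=\tfrac12\varepsilon(a-b)^2+o(\varepsilon),\]
whereas the uniform bound $|\tilde g_\varepsilon-g|=O(\varepsilon)$ over a window of length $2\varepsilon$ forces
\[\int_{t_0-\varepsilon}^{t_0+\varepsilon}\bigl(|\tilde g_\varepsilon|^p-|g|^p\bigr)\diffd s=O(\varepsilon^2).\]
Combining, $K(\tilde g_\varepsilon,1)-K(g,1)=\tfrac14\varepsilon(a-b)^2+o(\varepsilon)>0$ for all sufficiently small $\varepsilon$, which would contradict optimality of $g$ once $\tilde g_\varepsilon$ is shown to be feasible.

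Feasibility outside the perturbation window is automatic: for $\theta\le t_0-\varepsilon$ we have $\tilde g_\varepsilon=g$, and for $\theta\ge t_0+\varepsilon$ the difference $K(\tilde g_\varepsilon,\theta)-K(g,\theta)$ equals the strictly positive constant above, so the constraint is preserved with slack. The delicate case is $\theta\in(t_0-\varepsilon,t_0+\varepsilon)$, where $K(\tilde g_\varepsilon,\theta)-K(g,t_0-\varepsilon)=O(\varepsilon)$; provided $K(g,t_0)>0$, the strict positivity of $K(g,t_0-\varepsilon)$ for small $\varepsilon$ absorbs this fluctuation by continuity, and the argument is complete.

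The main obstacle is therefore the case $K(g,t_0)=0$, in which the constraint is active at the corner so the $O(\varepsilon)$ fluctuation above cannot be absorbed by slack. To handle this I would analyse the local structure: if $K(g,\cdot)$ is strictly positive on at least one side of $t_0$, one instead uses a one-sided smoothing confined to that side, where the remaining slack absorbs the fluctuation. If $K(g,\cdot)$ vanishes identically on a neighbourhood of $t_0$, the integrand is forced to vanish there, yielding the first-order ODE $|g'|=\sqrt{2m\beta}\,|g|^{p/2}$; since $g$ is locally non-vanishing this ODE is smooth and rules out any corner except a sign flip of $g'$ at a local maximum, which is again defeated by the same linear smoothing (the interpolant has near-zero slope, so $|\tilde g_\varepsilon|^p$ keeps $K(\tilde g_\varepsilon,\theta)\ge 0$ throughout the window). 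In every case one produces a feasible path that strictly improves $K(\cdot,1)$, contradicting the optimality of $g$ and establishing $g\in C^1$.
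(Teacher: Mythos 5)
Your core construction is the same as the paper's: interpolate linearly across the corner on $[t_0-\eps,t_0+\eps]$ and extract the first-order gain $\tfrac{\eps}{4}(a-b)^2$, and you rightly flag that the only delicate point is feasibility of the constraint inside the window when it is active at the corner. But your patches for that case do not hold up. A ``one-sided smoothing confined to the side where $K>0$'' cannot do the job: a replacement supported in $[t_0-\eps,t_0]$ or $[t_0,t_0+\eps]$ alone has slope $g'(t_0^\mp)+O(\eps)$, hence leaves the corner at $t_0$ intact and produces no order-$\eps$ gain in $K(\cdot,1)$; moreover the slack you invoke vanishes as $\theta\to t_0$ (since $K(g,t_0)=0$ and $K(g,\cdot)$ is continuous), so it cannot absorb a deficit occurring at points arbitrarily close to $t_0$. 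In the sub-case $K\equiv 0$ near $t_0$, the ODE $|g'|=\sqrt{2m\beta}\,|g|^{p/2}$ does \emph{not} rule out a corner at a local minimum (switching from the decreasing to the increasing branch is equally consistent with it), and ``$g$ locally non-vanishing'' is unjustified and false for $p=0$; finally, the dichotomy ``$K$ strictly positive on one side / $K\equiv0$ on a neighbourhood'' is not exhaustive.

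The missing observation, which makes the original two-sided interpolant work with no slack at all, is this: if $K(g,t_0)=0$ then $t_0$ is a minimum of $K(g,\cdot)\ge 0$, so the one-sided derivatives of $K(g,\cdot)$ satisfy $\tfrac12 g'(t_0^+)^2 \le m\beta|g(t_0)|^p \le \tfrac12 g'(t_0^-)^2$, i.e.\ $|b|\le|a|$ with $a:=g'(t_0^-)$, $b:=g'(t_0^+)$. Writing $\bar c=(a+b)/2$, a genuine corner then forces $a^2-\bar c^2>0$, and the running difference satisfies $K(\tilde g_\eps,\theta)-K(g,\theta)=\bigl(\theta-(t_0-\eps)\bigr)\bigl[\tfrac12(a^2-\bar c^2)+o(1)\bigr]\ge 0$ on $[t_0-\eps,t_0]$, while on $[t_0,t_0+\eps]$ it is affine in $\theta$ with nonnegative value at $t_0$ and value $\tfrac\eps4(a-b)^2+o(\eps)>0$ at $t_0+\eps$; hence $K(\tilde g_\eps,\theta)\ge K(g,\theta)\ge 0$ throughout the window for small $\eps$ (the potential term contributes only $o(\eps)$, proportionally to the window actually integrated over). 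Combined with your continuity argument when $K(g,t_0)>0$, this closes the feasibility check uniformly, with no case analysis on the behaviour of $K$ in a neighbourhood; it is precisely the content of the paper's terse ``very similar calculation''.
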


\begin{proof}
As $g$ is $C^2_{\text{piecewise}}$, there exist $0=x_0 < x_1 <\ldots < x_K=1$ such that $g$ is $C^2$ on any interval $(x_i, x_{i+1}), i=0,\ldots ,K-1.$ It is therefore enough to show that $\forall i= 1 ,\ldots, K-1, g'(x_i-) = g'(x_i+).$ To simplify notations we just write $x$ for $x_i$ in the following.

Suppose that
$g'(x+)<g'(x-)$. We will show that it is possible to construct a better function. Choose $\eps>0$, and consider the
function $\tilde g$ defined by taking $g$, and interpolating linearly
on the interval $[x-\eps,x+\eps]$:
$$
\tilde g(t):=
\begin{cases}
g(t),\quad\quad& t\in[0,x-\eps)\\
g(x-\eps) +\frac{t-(x-\eps)}{2\eps}\Big(g(x+\eps)-g(x-\eps)\Big), & t\in[x-\eps,x+\eps)\\
g(t), &t\in[x+\eps,1].
\end{cases}
$$
For $t\in(x-\eps,x)$, we have $g'(t)=g'(x-)+O(\eps)$; similarly, for
$t\in(x,x+\eps)$, we have $g'(t)=g'(x+)+O(\eps)$. To simplify
notation, we define $g'_-:=g'(x-)$ and $g'_+:=g'(x+)$. Using Taylor
expansions again, we have
\begin{align*}
K(\tilde
g,1)-K(g,1)&=\int_{x-\eps}^{x+\eps}\left(\frac12g'(t)^2-\frac12\tilde
g'(t)^2\right)\diffd t+O(\eps^2)\\
&=\eps\Big(\frac12{g'_-}^2+\frac12{g'_+}^2-\frac14(g'_-+g'_+)^2 \Big)+O(\eps^2)\\
&=\frac\eps4(g'_--g'_+)^2+O(\eps^2)>0
\end{align*}
for all sufficiently small $\eps>0$. A very similar calculation shows that
$\tilde g$ satisfies~\eqref{eqn:constraint} for all $t\in(x-\eps,x+\eps)$, which
proves the result. The case $g'_+<g'_-$ is settled in the same manner.
\end{proof}

\begin{lem}\label{lem:optdiffeqn}
If $g$ is an optimal trajectory then, on
any interval $I$ of $[0,1]$  such that $K(g,t)>0$ for all $t\in I$, one has
\begin{equation}\label{eqn:optdiffeqn}
m\beta p g^{p-1}+g''=0.
\end{equation}
\end{lem}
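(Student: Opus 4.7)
Equation~\eqref{eqn:optdiffeqn} is precisely the Euler--Lagrange equation associated with the functional $K(\cdot,1)$. Since the inequality constraint $K(g,\cdot)\ge 0$ is strictly inactive on $I$, the plan is to perturb $g$ only inside $I$ and extract the vanishing of the first variation from optimality in the standard calculus-of-variations manner.

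Concretely, I would fix a test function $\phi\in C_c^\infty(I^\circ)$ with support $[a,b]\subset I^\circ$ and set $g_\varepsilon:=g+\varepsilon\phi$. This preserves the endpoint conditions $g_\varepsilon(0)=0$ and $g_\varepsilon(1)=z$ since $\phi$ vanishes near $0$ and $1$, and the first variation reads
\[
\delta K := \int_a^b \Big[m\beta p\,g(s)^{p-1}\phi(s) - g'(s)\phi'(s)\Big]\diffd s,
\]
so that $K(g_\varepsilon,1)=K(g,1)+\varepsilon\,\delta K+O(\varepsilon^2)$. The key step is to show $g_\varepsilon$ is admissible for all sufficiently small $|\varepsilon|$: for $t\le a$, $g_\varepsilon\equiv g$ on $[0,t]$ so the constraint is unchanged; for $t\in[a,b]$, continuity of $K(g,\cdot)$ together with $K(g,\cdot)>0$ on the compact set $[a,b]\subset I^\circ$ gives a uniform lower bound $K(g,t)\ge c>0$ there, so a small perturbation cannot push it negative; and for $t\ge b$, since $g_\varepsilon\equiv g$ on $[b,1]$, the difference $K(g_\varepsilon,t)-K(g,t)$ is the \emph{single constant} $\varepsilon\,\delta K+O(\varepsilon^2)$ for all $t\in[b,1]$. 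If $\delta K\neq 0$, choosing $\varepsilon$ with $\mathrm{sign}(\varepsilon)=\mathrm{sign}(\delta K)$ makes this constant strictly positive for small $|\varepsilon|$, preserving~\eqref{eqn:constraint} on $[b,1]$ while simultaneously increasing $K(g_\varepsilon,1)$ above $K(g,1)$, contradicting optimality. Hence $\delta K=0$ for every $\phi\in C_c^\infty(I^\circ)$.

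Finally, I would invoke Lemma~\ref{lem:derivative_cts} so that $g'$ is continuous across the (finitely many) breakpoints of $g$ inside $I$, and combine this with piecewise $C^2$ regularity to integrate by parts: on each smooth piece $\int g'\phi'\,\diffd s = -\int g''\phi\,\diffd s$, and the boundary contributions at internal breakpoints telescope to zero since both $\phi$ and $g'$ are continuous there. This yields $\int_I [m\beta p\,g^{p-1}+g'']\phi\,\diffd s=0$ for every $\phi\in C_c^\infty(I^\circ)$, and the fundamental lemma of the calculus of variations then gives~\eqref{eqn:optdiffeqn} on each $C^2$ piece of $I$.

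The only genuine obstacle is the admissibility step: one must exploit both the strict positivity of $K(g,\cdot)$ on $I$ (so that small perturbations supported there cannot violate the constraint locally) and the crucial observation that the induced change in the $K$-profile beyond the support of $\phi$ is a single constant in $t$, whose sign can be freely prescribed by the sign of $\varepsilon$. Once $\delta K=0$ is established, the passage to~\eqref{eqn:optdiffeqn} via integration by parts and the fundamental lemma is routine.
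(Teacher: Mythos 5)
Your argument is correct and is essentially the paper's proof: both perturb $g$ only inside the interval where $K(g,\cdot)>0$, use the uniform lower bound $c>0$ on a compact subinterval together with the fact that past the support of the perturbation the change in $K(\cdot,t)$ is a $t$-independent constant of controllable sign to keep the constraint satisfied, and then contradict optimality. The only cosmetic difference is packaging: the paper chooses the perturbation $h$ with the same sign as $m\beta p g^{p-1}+g''$ to get the contradiction in one step, whereas you fix the sign of $\varepsilon$, deduce that the first variation vanishes for all compactly supported test functions, and finish with integration by parts and the fundamental lemma of the calculus of variations.
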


\begin{proof}
Starting from an optimal trajectory $g$, consider the deformed trajectory
$g+\eps h$ where $\eps$ is small and $h$ is a sufficiently smooth
function with $h(0)=h(1)=0$ so that in particular the end-point is still
fixed at $z$. Using a Taylor expansion and an integration by parts we have
\begin{equation}
\begin{aligned}
K(g+\eps h,t)
	& = K(g,t) + \int_0^t \big[m\beta (g+\eps h)^p -m\beta
g^p-g'h'\eps\big]\diffd s +o(\eps),\\
	& = K(g,t) + \eps \int_0^t \big[m\beta p g^{p-1}+g''\big] h\,\diffd s
		-\eps g'(t) h(t) +o(\eps).
\end{aligned}
\label{deformed}
\end{equation}
Assume that $K(g,t)>0$ on some interval~$I$. Then for any $t_1<t_2$ in $I$, there exists $c>0$ such that $K(g,t)>c$ for all $t\in[t_1,t_2]$.
Assuming that \eqref{eqn:optdiffeqn} does not hold, choose $h$ to be of the
same sign of $m\beta p g^{p-1}+g''$ in  $t\in[t_1,t_2]$ and zero everywhere
else.
Then, for $\eps$ small enough, $K(g+\eps h,1)>K(g,1)$  and
$K(g+\eps h,t)\ge0$ for all $t$ (one simply needs to choose $\eps$ so that
$\eps g'(t)h(t)-o(\eps)<c$ for all $t\in[t_1,t_2]$.) Therefore, if $g$
does not satisfy~\eqref{eqn:optdiffeqn}, then $g+\eps h$ is a better path.
\end{proof}

Recall from (\ref{r(s)}) that
\begin{equation*}
r(s) = \left(\frac{m\beta s^2}{2}(2-p)^2\right)^{\frac{1}{2-p}}
\end{equation*}
describes the limiting shape of the boundary of the trace of the rescaled BBM. It solves
$$
\frac12r'(s)^2=m \beta r(s)^p,\quad r(0)=0.
$$
We are now going to prove that it is not possible to find a path $f$ along which the condition~\eqref{eqn:constraint} remains valid and $f(s)>r(s)$ for some $s \in(0,1).$ Indeed this would clearly yield a contradiction with Theorem \ref{rightmost_thm}.

\begin{lem}
Let $f \in C^2_{\text{piecewise}}, f(0)=0, f\ge 0$ be such that~\eqref{eqn:constraint} holds. Then
$$
f(s) \le r(s), \forall s \in [0,1].
$$
\end{lem}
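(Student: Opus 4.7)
The approach is proof by contradiction: suppose $f(s_0) > r(s_0)$ for some $s_0 \in (0,1]$, and derive an impossibility. I would first reduce to the case that $f$ is non-decreasing by replacing $f$ with its running supremum $\tilde f(s) := \sup_{u \le s} f(u)$. This $\tilde f$ is continuous, non-decreasing, Lipschitz (hence in $H_1$), $\tilde f(0)=0$, $\tilde f \ge f \ge 0$; and at a.e.~$s$ with $\tilde f'(s) > 0$ the non-negative function $\tilde f - f$ attains a local minimum, giving $\tilde f(s)=f(s)$ and $\tilde f'(s)=f'(s)$. Hence $\int_0^t \tilde f'^2\,\diffd u \le \int_0^t f'^2\,\diffd u$ and $\tilde f^p \ge f^p$, so $K(\tilde f,t) \ge K(f,t) \ge 0$. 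Since $\tilde f(s_0) \ge f(s_0) > r(s_0)$, we may assume $f$ itself is non-decreasing.

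The next step is the linearising substitution $y(s) := f(s)^{(2-p)/2}$, with $c := (2-p)\sqrt{m\beta/2}$. One checks $r(s)^{(2-p)/2} \equiv cs$, so the target $f \le r$ becomes $y \le cs$, and the identity $f'^2 = \frac{4 f^p y'^2}{(2-p)^2}$ (valid where $f > 0$) rewrites the constraint as $\int_0^s f(u)^p(c^2 - y'(u)^2)\,\diffd u \ge 0$ for all $s \in [0,1]$. Set $g := y - cs$, so $g(0)=0$ and $c^2 - y'^2 = -g'(g'+2c)$. Assuming $g(s_1) > 0$ for some $s_1$, pick $s^* \in [0,s_1]$ where $g$ attains its maximum, and let $a := \sup\{u \le s^* : f(u)=0\} \in [0,s^*)$; then $f=0$ on $[0,a]$ (so $g(a)=-ca \le 0$) and $f>0$ on $(a,s^*]$. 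The constraint at $s^*$ reads
\[
\int_0^{s^*} f^p g'^2\,\diffd u + 2c \int_0^{s^*} f^p g'\,\diffd u \le 0.
\]
The first integral is $\ge 0$; for the second, I would integrate by parts on $(a,s^*]$ using $f(a)=0$ and the monotonicity of $f$ (so that $\diffd(f^p)$ is a non-negative Stieltjes measure), and bound $\int_a^{s^*} g\,\diffd(f^p) \le g(s^*) f(s^*)^p$ via $g \le g(s^*)$, obtaining $\int_0^{s^*} f^p g'\,\diffd u \ge 0$. Both non-negative terms must therefore vanish; $\int f^p g'^2 = 0$ combined with $f>0$ on $(a,s^*]$ forces $g$ to be constant on $[a,s^*]$, so $g(s^*)=g(a) \le 0$, contradicting $g(s^*) > 0$.

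The most delicate point is the monotonisation: the pointwise identification $\tilde f'(s) = f'(s)$ wherever $\tilde f'(s) > 0$ is what ensures $K(\tilde f,\cdot) \ge K(f,\cdot)$ and thereby preserves the constraint under the reduction. Once this is arranged, the substitution places $r$ on the linear locus $y = cs$, and the integration-by-parts identity closes the contradiction in essentially one line.
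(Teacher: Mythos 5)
Your proof is correct, but it takes a genuinely different route from the paper's. The paper argues probabilistically: from a hypothetical $f$ with $f(s_0)>r(s_0)$ and $K(f,\cdot)\ge 0$ it builds a modified path $g$ (equal to $f$ up to $s_0$, then constant until some $s_1$ with $K(g,s_1)>0$ and $f(s_0)>r(s_1)$, then following $\frac12 g'^2=m\beta g^p$), so that $g(1)>\bar z$, $K(g,1)=K(g,s_1)>0$ and $\theta_0(g)=\infty$; the almost sure lower bound of Theorem~\ref{growth_paths_thm} then produces particles beyond $\bar z\,T^{2/(2-p)}$, contradicting Theorem~\ref{rightmost_thm}. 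You instead give a self-contained deterministic argument: monotonise via the running supremum (your justification is sound, since $\tilde f'=f'$ a.e.\ where $\tilde f'>0$, so $K(\tilde f,\cdot)\ge K(f,\cdot)$), linearise with $y=f^{(2-p)/2}$ so that $r$ becomes the line $cs$, and close with a Stieltjes integration by parts at the maximum of $g=y-cs$, using that $\diffd(f^p)$ is a non-negative measure. The paper's proof is short because the probabilistic machinery is already in place and it carries the intuition that a path above $r$ with non-negative $K$ would contradict the speed of the rightmost particle; your proof needs no input from Theorems~\ref{rightmost_thm} or~\ref{growth_paths_thm}, applies at once to all of $H_1$ rather than only $C^2_{\text{piecewise}}$, and thus fits naturally into the purely variational part of Section~\ref{sec:optpaths}. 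Two small points to tidy up: for $p=0$ the identity $\int_0^{s^*}f^pg'\,\diffd u=\int_a^{s^*}f^pg'\,\diffd u$ fails since $f^p\equiv1$, but then $\int_0^{s^*}g'\,\diffd u=g(s^*)>0$ gives the contradiction immediately; and you should record that $y$ is absolutely continuous on $[a,1]$ (it is: $y$ is continuous, non-decreasing, Lipschitz on each $[a+\eps,1]$, and the fundamental-theorem identity passes to the limit), which is what licenses both the rewriting of the constraint in terms of $y'$ and the final conclusion that $g$ is constant on $[a,s^*]$.
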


\begin{proof}
Suppose that there exists $f \in C^2_{\text{piecewise}}, f(0)=0, f\ge 0$ such that~\eqref{eqn:constraint} holds and such that  $f(s_0) > r(s_0)$ for some $s_0\in (0,1).$  We construct $g \in C^2_{\text{piecewise}}, g(0)=0,g\ge 0$ as follows: let $s_1=s_0$ if $K(f,s_0)>0$ and otherwise pick $s_1>s_0$ so that $f(s_0)>r(s_1)$.
\begin{itemize}
\item[-] On $[0,s_0]$ take $g=f$,
\item[-] On $[s_0,s_1]$ take $g(s)=f(s_0)$,
\item[-] On $[s_1,1]$ $g$ is the solution of
$$
\frac12g'(s)^2=m \beta g(s)^p,\quad g(s_1)=f(s_0).
$$
\end{itemize}
Observe that $g$ has the properties that $g(1)>r(1),K(g,s_1)>0$ and $K(g,1)=K(g,s_1)$.
According to Theorem \ref{growth_paths_thm}, for any $\varepsilon>0$
$$
\liminf_{T \to  \infty} T^{-\frac{2+p}{2-p}} \log |N_T( B(g,\epsilon),1)| \ge K(g,1)=K(g,s_1)>0
$$
which contradicts Theorem \ref{rightmost_thm}.
\end{proof}

Now to prove the first line of Theorem~\ref{constrained_thm}, it is
sufficient to show that
\begin{equation}
\sup\big\{ K(f,1), 0\le f(s)\le r(s), f(1)=z \big\} \le \nas(z).
\label{1stline}
\end{equation}
Take $f$ such that $f\le r$ and $\exists t<1, K(f,t)<0$. Then $f$ cannot be
optimal among paths which stay below $r$: one can easily construct
a better path $\tilde f$ staying below $r$ by choosing $\tilde f= r$ up to
$r^{-1}( f(t) )$, constant between that point and $t$, and equal to $f$ thereafter.

\begin{lem}\label{lem:leaverightmost}
Define $s_z(g):=\inf\{s\in[0,1]:g(s)\neq r(s)\}$. Then any
optimal path $g$ solves
\begin{equation*}
m\beta p g^{p-1}+g''=0
\end{equation*}
on the interval $s\in(s_z(g),1)$. Furthermore, $K(g,s)>0$ for all $s\in(s_z(g),1)$.
\end{lem}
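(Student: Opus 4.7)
The first conclusion will follow from the second by applying Lemma~\ref{lem:optdiffeqn} on the open interval $(s_z(g),1)$, so my plan is to concentrate on proving the strict inequality $K(g,s)>0$ there and let the Euler--Lagrange equation fall out as a consequence. I would argue by contradiction: suppose there exists $t^*\in(s_z(g),1)$ with $K(g,t^*)=0$, and construct a competitor path that strictly improves on $g$, violating optimality.

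For the construction, since $g\le r$ by the previous lemma and $r$ is strictly increasing, there is a unique $\hat t\in[0,t^*]$ with $r(\hat t)=g(t^*)$. I would take
\[
\tilde g(s):=\begin{cases} r(s), & s\in[0,\hat t],\\ g(t^*), & s\in[\hat t,t^*],\\ g(s), & s\in[t^*,1], \end{cases}
\]
which is continuous, piecewise $C^2$, and shares the endpoints $0$ and $z$ with $g$. One checks on each piece that $K(\tilde g,s)\ge 0$: on $[0,\hat t]$ because $K(r,\cdot)\equiv 0$; on $[\hat t,t^*]$ because the constant segment contributes $m\beta g(t^*)^p(s-\hat t)\ge 0$; and on $[t^*,1]$ because $K(\tilde g,s)=m\beta g(t^*)^p(t^*-\hat t)+K(g,s)$ is a sum of non-negative terms. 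A direct computation then gives
\[
K(\tilde g,1)=m\beta\, g(t^*)^p\,(t^*-\hat t)+K(g,1),
\]
which strictly exceeds $K(g,1)$ in the generic case $0<g(t^*)<r(t^*)$, contradicting the optimality of $g$.

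The main obstacle, I expect, will be to rule out the two boundary cases $g(t^*)=r(t^*)$ and $g(t^*)=0$, in which the competitor only ties with $g$. For $g(t^*)=r(t^*)$ I would combine $g\le r$ with equality at both $s_z(g)$ and $t^*$ with the $C^1$-regularity from Lemma~\ref{lem:derivative_cts} to force $g'(t^*)=r'(t^*)$ by a one-sided derivative comparison; replacing $g|_{[0,t^*]}$ by $r|_{[0,t^*]}$ then yields another optimal $C^1$ path but with strictly larger value of $s_z$, so selecting at the outset an optimum $g$ that maximises $s_z(g)$ eliminates this situation. The case $g(t^*)=0$ I would handle analogously, replacing the initial segment of $g$ by the constant zero function and exploiting the non-triviality of $g$ on $(s_z(g),t^*)$ guaranteed by the definition of $s_z$. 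In both boundary cases the argument reduces to the strict case of the previous paragraph.
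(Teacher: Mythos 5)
Your main construction (follow $r$ up to height $g(t^*)$, wait at that height, then splice onto $g$ after $t^*$) is sound, and it is essentially the paper's own key observation that any point strictly below $r$ can be reached with strictly positive $K$, combined with optimality; it correctly disposes of the case $0<g(t^*)<r(t^*)$. The genuine gap is in the two boundary cases. For $g(t^*)=r(t^*)$, passing to the modified path and ``selecting at the outset an optimum maximising $s_z$'' does not prove the lemma: the statement (and the uniqueness argument built on it later) concerns \emph{every} optimal path, so exhibiting one better-behaved optimum says nothing about the given $g$; moreover the existence of an optimal path attaining $\sup s_z$ over the set of optima is itself unproven and would need a separate compactness/limit argument. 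The paper instead derives an outright contradiction for the original $g$: if $g$ touches $r$ again at some $t^*>s_z(g)$, there is a subinterval $[a,b]$ with $g=r$ at both endpoints and $0<g<r$ inside; your own splicing argument, applied at points of $(a,b)$, gives $K(g,\cdot)>0$ there, so Lemma~\ref{lem:optdiffeqn} forces $g$ to be concave on $(a,b)$, which is incompatible with the strict convexity of $r$ together with the double contact. This excludes re-touching for the path at hand and closes that case without any selection device.

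The case $g(t^*)=0$ (relevant for $p>0$) is also not handled: your competitor with a constant zero initial segment accumulates potential at rate $m\beta\cdot 0^p=0$, so it only ties with $g$, and it has $s_z=0$, so the $s_z$-maximisation idea does not apply either; the appeal to ``non-triviality of $g$ on $(s_z(g),t^*)$'' does not produce a reduction to the strict case. What is needed is the paper's remark that the point $(t^*,0)$ itself can be reached with strictly positive $K$ under the constraint: follow $r$ for a short time $\varepsilon$, stay at height $r(\varepsilon)$ for most of $[\varepsilon,t^*]$, and descend to $0$ just before $t^*$; for small $\varepsilon$ the gain, of order $\varepsilon^{2p/(2-p)}$, dominates the kinetic cost, of order $\varepsilon^{4/(2-p)}$, and the constraint holds throughout. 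Splicing this in front of $g|_{[t^*,1]}$ gives a strict improvement and the desired contradiction in this case as well.
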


\begin{proof}
If $s_z(g)=1$, there is nothing to prove, so we assume $s_z(g)\in[0,1).$
The first thing to observe is that any point $(t,y) \in \{ t\in [0,1]
, 0\le y <r(t)\} $ can be reached by a path $f$ such that $f(t)=y$ and
$K(f,t)>0$. If $y>0$ just let $f = r$ until $r(s)=y$ and then let $f$ stay
constant ($K$ stays at 0 until $s$ and then accumulates some positive
growth until $t$). If $y=0$ it is equally easy to check that there is a path which reaches this point, fulfills~\eqref{eqn:constraint} and has a strictly positive $K$ at the end.

Suppose now that there exists $t>s_z$ such that $g(t)=r(t)$. Then there must exist $[a,b] \subset [s_z,t]$, $a<b$, such that $g(a)-r(a) = g(b)-r(b)=0$ and $r(t)-g(t)>0$ for all $t \in (a,b)$. But by the previous observation this means that $K(g,t)>0$ for all $t \in (a,b)$ and therefore by Lemma \ref{lem:optdiffeqn} $g$ must be strictly concave on $(a,b).$ But since $r(s)$ is strictly convex this is a contradiction. Thus we must have $K>0$ on $(s_z,1]$ and we conclude by Lemma  \ref{lem:optdiffeqn} again.
\end{proof}

Lemmas~\ref{lem:optdiffeqn},~\ref{lem:derivative_cts}
and~\ref{lem:leaverightmost} show that any positive
solution $g_z$ to the optimisation problem (in $C^2_{\text{\textup{piecewise}}}$) is such that there exists $s_z$ such that
\begin{enumerate}
\item $g_z=r$ on $[0,s_z]$,
\item $g_z$ solves $g_z''+m\beta pg_z^{p-1}=0$ on $(s_z,1]$, and $g_z(1)=z$,
\item $g_z'$ is continuous at $s_z.$
\end{enumerate}
Therefore we conclude that the unique positive solution to the optimisation
problem in $C^2_{\text{\textup{piecewise}}}$ is $g_z$.
(Note that it is easily seen that $s_z>0$ for $p>0$.)

\begin{lem}\label{lem:c2opt}
For any endpoint $z$, if $g$ is an optimal path over $C^2_{\text{\emph{piecewise}}}$ then $g$ is also optimal over $H_1$.
\end{lem}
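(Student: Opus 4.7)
The strategy is approximation, leveraging the already-established optimality of $g_z$ within $C^2_{\text{piecewise}}$. Let $f\in H_1$ with $f(1)=z$ and $K(f,t)\ge 0$ for all $t\in[0,1]$; WLOG $z\geq 0$ and $f\geq 0$ (by the remark after Theorem~\ref{constrained_thm}). The plan is to build a sequence of $C^2_{\text{piecewise}}$ approximants $f_n$ satisfying $f_n(0)=0$, $f_n(1)=z$ and the full constraint $K(f_n,t)\ge 0$ for all $t$, with $K(f_n,1)\to K(f,1)$. Optimality of $g_z$ then gives $K(f_n,1)\le K(g_z,1)$ for every $n$, and passing to the limit yields the desired $K(f,1)\le K(g_z,1)$.

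For the approximants, the natural choice is piecewise-linear interpolation: let $P_n$ interpolate $f$ at the uniform grid $\{k/n\}_{k=0}^{n}$. Then $P_n \in C^2_{\text{piecewise}}$ (trivially, being piecewise polynomial), with $P_n(0)=0$ and $P_n(1)=z$. Standard $H^1$ approximation yields $P_n\to f$ uniformly and $P_n'\to f'$ in $L^2[0,1]$, together with the convenient one-sided Cauchy--Schwarz bound $\int_0^1 (P_n')^2\le\int_0^1 (f')^2$. Since $p<2$ and the $P_n$ are uniformly bounded, this transfers to $K(P_n,t)\to K(f,t)$ uniformly in $t\in[0,1]$; in particular the defect $\eta_n := \max(0,\,-\inf_t K(P_n,t))$ tends to zero.

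The hard part will be promoting the relaxed inequality $K(P_n,t)\ge -\eta_n$ to the exact constraint $K(f_n,t)\ge 0$. I see two options. The direct route is to perturb: take a smooth $\phi$ with $\phi(0)=\phi(1)=0$ and set $f_n := P_n + \sqrt{\eta_n}\,\phi$, then verify that for an appropriate choice of $\phi$ (chosen so that the first variation of $K$ in the direction $\phi$ is uniformly positive in $t$), the resulting $O(\sqrt{\eta_n})$ correction dominates the defect $-\eta_n$ for large $n$ and restores the constraint while preserving the endpoints. Alternatively, one can first prove a stability lemma for the $C^2_{\text{piecewise}}$ problem: the supremum of $K(\cdot,1)$ under the relaxed constraint $\inf_t K(\cdot,t)\ge -\epsilon$ tends to $K(g_z,1)$ as $\epsilon\downarrow 0$. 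This would follow from a compactness argument --- near-maximisers have $\int (h')^2$ bounded by a Jensen-type estimate like \eqref{jensen}, so a subsequence converges weakly in $H_1$ and uniformly in $C[0,1]$ to some $h_0$; weak lower semicontinuity of $\int (h')^2$ together with continuity of $\int |h|^p$ identifies the limit value as at most $K(g_z,1)$. Either route closes the argument, with the sign-check in the perturbation being the only delicate computation.
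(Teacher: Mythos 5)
Your core approximation step is exactly the paper's: the paper likewise takes the piecewise-linear interpolant of $f$ on the grid $\{k/n\}$, uses that linear interpolation can only decrease $\int(\cdot')^2$, notes the uniform closeness of the potential terms, and then invokes optimality of $g_z$ in $C^2_{\text{piecewise}}$ (choosing $n$ so that the loss in $K(\cdot,1)$ is below the assumed optimality gap, rather than passing to a limit as you do --- a cosmetic difference). Where you diverge is that the paper simply asserts the interpolant satisfies ``all the required properties'' and never addresses your ``hard part''; you are right that this point deserves attention, since the interpolant genuinely can violate the constraint: on the first cell $P_n$ has constant slope $a=nf(1/n)$, so for $p>0$ one has $K(P_n,t)=\frac{m\beta a^p}{p+1}t^{p+1}-\frac{a^2}{2}t<0$ for all small $t>0$ whenever $f(1/n)>0$ (and similar small defects can appear near any interior time where $K(f,\cdot)$ vanishes). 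So the defect $\eta_n$ is generically strictly positive, and some repair is needed for a fully rigorous argument.

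However, neither of your sketched repairs works as stated, so your proposal still has a hole precisely at the step you flagged. Route (a): the first variation $\int_0^t\bigl[m\beta p P_n^{p-1}\phi-P_n'\phi'\bigr]\diffd s$ tends to $0$ as $t\to0$, so it cannot be ``uniformly positive in $t$''; and the defect lives exactly in that initial region, where the gain $\sqrt{\eta_n}\,V_t(\phi)$, the second-order cost $\tfrac{\eta_n}{2}\int_0^t\phi'^2$ and the defect itself are all small and of comparable order, so the claimed domination is not automatic and must be checked where it is least plausible (a fixed smooth $\phi$ scaled by $\sqrt{\eta_n}$ will not do; any fix must be adapted to the offending region, e.g.\ splicing in a piece of $r$ near the bad times). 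Route (b): your compactness/lower-semicontinuity argument only shows that near-maximisers of the relaxed problem converge to an $H_1$ function satisfying the exact constraint, hence that the relaxed supremum converges to the $H_1$-constrained supremum --- which is the very quantity you are trying to bound by $K(g_z,1)$; as described this is circular unless you add a regularity or uniqueness argument for $H_1$ maximisers. In short: your approach is the paper's approach, your extra scruple is legitimate (the paper glosses over it), but to claim a complete proof you must actually carry out the relaxed-to-exact constraint step, and the two options as sketched do not yet do so.
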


\begin{proof}
We have shown the existence of an optimal path $g_z$ over all paths in $C^2_{\text{piecewise}}$; suppose there is a better path $f\in H_1$. That is, $f(0)=0$, $f(1)=z$, $f\geq0$, $K(f,t)\ge 0$
$\forall t\in[0,1]$ and $K(f,1) > K(g_z,1) + \varepsilon$ for some
$\varepsilon>0$. For each $n\in \N$, define $h_n : [0,1]\to\R$ by setting $h_n(k/n) = f(k/n)$ for all $k=0,1,2,\ldots,n$ and interpolating linearly elsewhere. Then each $h_n$ is piecewise linear and hence certainly piecewise $C^2$; and since $h_n$ agrees with $f$ at each $k/n$ and linear functions minimise derivatives, we have
\[\int_0^t h_n'(s)^2 \diffd s \leq \int_0^t f'(s)^2 \diffd s \hs\hs \forall t\in\left\{0,\frac1n, \frac2n,\ldots 1\right\}.\]
Now by choosing $n$ large we may insist that
\[\int_0^t h_n'(s)^2 \diffd s < \int_0^t f'(s)^2 \diffd s + \varepsilon \hs\hs \forall t\in[0,1]\]
and
\[\int_0^t h_n(s)^p \diffd s > \int_0^t f(s)^p \diffd
s - \frac\varepsilon{2m\beta} \hs\hs \forall t\in[0,1];\]
but then $h_n$ is a function in $C^2_{\text{piecewise}}$ satisfying all the
required properties and with $K(h_n,1) > K(f,1)-\varepsilon > K(g_z,1)$.
This contradicts the assumption that $g_z$ was optimal in $C^2_{\text{piecewise}}$.
\end{proof}

All that is left to prove the first part of Theorem \ref{constrained_thm} is to show uniqueness in $H_1$.
\begin{proof}[Proof of uniqueness]
Now fix $z>0$ and suppose that $\exists h \in H_1$ such  that
$h(1)=z$ and $K(h,1) = K(g_z,1)$ and
$\theta_0(h) \ge 1$ but that $h\not\equiv g_z$. Take
$s\in[0,1]$ such that $h(s)\neq g_z(s)$. By rescaling time by $1/s$ and
considering the endpoint $h(s)$, by Lemma \ref{lem:c2opt} we can find some positive piecewise $C^2$ function $f_1$ ending at $h(s)$ with growth rate $K(f,s) \geq K(h,s)$. Equally, there is an optimal positive piecewise $C^2$ function $f_2$ amongst functions beginning at $h(s)$ and ending at $z$; this is not
immediate from our results as we have not considered starting from anywhere
other than the origin, but our proofs easily carry over with no extra work.
Since these two optimal growth rates (from 0 to $h(s)$, and from $h(s)$ to
$z$) are achieved by positive piecewise $C^2$ functions, there is a positive piecewise $C^2$
function $f$ such that $f(s) = h(s)$ and $K(f,1) \geq K(h,1) = K(g_z,1)$.
This contradicts the uniqueness of $g_z$ amongst positive piecewise $C^2$ functions.

Note that for $z=0$ and $p>0$, the solution is not unique: the positive function $g_0$ and the negative function $-g_0$ are both optimal.
\end{proof}


We now turn to the second part of Theorem \ref{constrained_thm} which concerns the total population size.

\begin{lem}\label{lem:optgrowth}
There exists a unique $\bestzas\ge0$  such that
\[\bestnas:=\nas(\bestzas)=\sup_z \nas(z)=\sup \big\{ K(f,1), f\in C[0,1], \theta_0(f)=\infty\big\}.\]
The total population size satisfies
\[\lim_{T\to\infty}\frac{1}{T^{\frac{2+p}{2-p}}}\log |N(T)| = \bestnas\qquad\text{almost surely},
\]
where one finds
\[ g_{\bestzas}'(1)=0,
\qquad \bestzas = \left[\frac{\sqrt{2m\beta}}{\frac {2^{\frac{3p-2}{2p}}}{2-p}
	+\displaystyle\int_{2^{-1/p}}^1\frac{\diffd x}{\sqrt{1-x^p}}}\right]^\frac2{2-p}
\]
and
\[\bestnas=\frac{2-p}{2+p}m\beta\bestzas^p.\]
\end{lem}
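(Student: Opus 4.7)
The plan has three stages: first, show $\nas(z)$ attains its maximum at an interior point $\bestzas\in(0,\bar z)$ satisfying the first-order condition $g_{\bestzas}'(1)=0$; second, transfer this to the growth of $|N(T)|$ via Corollary~\ref{C:corol nas}; third, derive the closed-form expressions from the first integral of the Euler--Lagrange equation.

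Continuous dependence of the ODE $g''+m\beta p g^{p-1}=0$ on initial data, together with continuity of the matching condition $g_z'(s_z)=r'(s_z)$ that pins down $s_z$, makes $z\mapsto g_z$ and hence $\nas$ continuous on $[-\bar z,\bar z]$; by parity we restrict to $[0,\bar z]$. Lemma~\ref{lem:leaverightmost} gives $\nas(z)>0$ on $(0,\bar z)$, while $\nas(0)=0$ (trivial path) and $\nas(\bar z)=K(r,1)=0$, so the supremum is attained at some $\bestzas\in(0,\bar z)$. To obtain $g_{\bestzas}'(1)=0$, perturb $g_{\bestzas}$ by $\epsilon h$ where $h$ is supported in $(s_{\bestzas},1]$, vanishes at $s_{\bestzas}$ and satisfies $h(1)=1$; since $g_{\bestzas}$ solves the Euler--Lagrange equation on $(s_{\bestzas},1]$, the bulk integral in the expansion \eqref{deformed} of Lemma~\ref{lem:optdiffeqn} drops out and we are left with
\[
K(g_{\bestzas}+\epsilon h,1)-K(g_{\bestzas},1)=-\epsilon g_{\bestzas}'(1)+o(\epsilon),
\]
forcing $g_{\bestzas}'(1)=0$ by optimality. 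Uniqueness of $\bestzas$ will follow \emph{a posteriori} from the explicit formula derived in the third stage, since the transcendental equation produced there determines $\bestzas$ uniquely.

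For the asymptotics of $|N(T)|$, Theorem~\ref{rightmost_thm} confines all particles to positions bounded by $(\bar z+\delta)T^{2/(2-p)}$ for all large $T$ almost surely. Cover $[-\bar z-\delta,\bar z+\delta]$ by finitely many intervals of width $\epsilon$ centred at $z_1,\dots,z_n$ and bound $|N(T)|\le\sum_i |N_T(D_{z_i,\epsilon},1)|$; Corollary~\ref{C:corol nas} then gives $\limsup T^{-(2+p)/(2-p)}\log|N(T)|\le\max_i \nas(z_i)$, and letting $\epsilon\to 0$ with continuity of $\nas$ yields the upper bound $\bestnas$. The matching lower bound $\liminf\ge\bestnas$ follows from $|N(T)|\ge|N_T(D_{\bestzas,\epsilon},1)|$ together with Corollary~\ref{C:corol nas}.

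For the explicit formulas, the autonomous equation on $(s_z,1]$ admits the conserved quantity $E=\tfrac12(g_z')^2+m\beta g_z^p$. Matching with $r$ at $s_z$, together with \eqref{eqr(s)}, gives $E=2m\beta g_z(s_z)^p$; imposing $g_{\bestzas}'(1)=0$ at the right endpoint forces $E=m\beta\bestzas^p$, whence $g_{\bestzas}(s_{\bestzas})=2^{-1/p}\bestzas$. Inverting $r$ via \eqref{r(s)} yields $s_{\bestzas}=\frac{2^{(p-1)/p}}{(2-p)\sqrt{m\beta}}\bestzas^{(2-p)/2}$, and separating variables in $g_z'=\sqrt{2m\beta(\bestzas^p-g_z^p)}$ on $(s_{\bestzas},1]$ with the substitution $u=g_z/\bestzas$ gives
\[
1-s_{\bestzas}=\frac{\bestzas^{(2-p)/2}}{\sqrt{2m\beta}}\int_{2^{-1/p}}^1\frac{\diffd x}{\sqrt{1-x^p}}.
\]
Summing the two displays (and using $\sqrt{2}\cdot 2^{(p-1)/p}=2^{(3p-2)/(2p)}$) produces the stated closed form for $\bestzas$; since the right-hand side is a fixed positive constant and $z\mapsto z^{(2-p)/2}$ is strictly monotone on $(0,\infty)$, uniqueness is automatic. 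For $\bestnas$, observe that $K(g_{\bestzas},s_{\bestzas})=K(r,s_{\bestzas})=0$, so the problem reduces to $(s_{\bestzas},1]$; multiplying the Euler--Lagrange equation by $g_{\bestzas}$ and integrating by parts using $g_{\bestzas}'(1)=0$ yields
\[
m\beta p\int_{s_{\bestzas}}^1 g_{\bestzas}^p\,\diffd s=g_{\bestzas}(s_{\bestzas})g_{\bestzas}'(s_{\bestzas})+\int_{s_{\bestzas}}^1(g_{\bestzas}')^2\,\diffd s,
\]
and substituting the first integral reduces $\bestnas$ to $\frac{2-p}{2+p}m\beta\bestzas^p$ after elementary algebra. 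The main delicacy lies in the envelope argument of the first stage; the rest is routine ODE manipulation.
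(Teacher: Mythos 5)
Your computational core (the first integral $\tfrac12 g'^2+m\beta g^p=E$, the matching $E=2m\beta g(s_z)^p$ at $s_z$, the condition $E=m\beta\bestzas^p$ from $g_{\bestzas}'(1)=0$, hence $g_{\bestzas}(s_{\bestzas})=2^{-1/p}\bestzas$, and the separation of variables giving $1-s_{\bestzas}$) is exactly the paper's derivation and is correct; your alternative derivation of $\bestnas$ by multiplying the Euler--Lagrange equation by $g$ and integrating by parts also checks out, because the leftover boundary term obeys $2r(s_{\bestzas})r'(s_{\bestzas})=(2-p)m\beta\bestzas^p s_{\bestzas}$ (the same cancellation the paper performs inside the proof of Theorem~\ref{constrained_id}, which is where the paper gets $\bestnas=\frac{2-p}{2+p}m\beta\bestzas^p$ instead). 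Your treatment of $\lim T^{-\frac{2+p}{2-p}}\log|N(T)|$ works but is more roundabout than needed: Corollary~\ref{C:corol nas} is an iterated limit, so for a covering at fixed $\eps$ you really need the closed-set upper bound of Theorem~\ref{growth_paths_thm} on each ball (or simply apply Theorem~\ref{growth_paths_thm} with $D=A=C[0,1]$, which gives both bounds in one line); this is a presentational, not a mathematical, issue.

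The genuine flaw is in your first stage: the claim ``$\nas(0)=0$ (trivial path)'' is false. For $p>0$ the optimal path to the origin is not the zero path --- it follows $r$ outward and returns, so $\nas(0)>0$ (this is exactly the point made in the remark after Theorem~\ref{constrained_thm} and used in Section~\ref{further_optpaths_sec} to show $K'(0^+)\ge0$); even for $p=0$ one has $\nas(0)=m\beta>0$, and in that case the maximiser actually sits at $z=0$, contradicting your interiority claim. Existence of a maximiser needs only continuity on the compact interval, but your route to $g_{\bestzas}'(1)=0$ leans on interiority and on a two-sided endpoint perturbation, and the perturbation as described (with $h$ merely vanishing at $s_{\bestzas}$) can violate the constraint $K\ge0$ just to the right of $s_{\bestzas}$, where $K(g_{\bestzas},\cdot)$ vanishes quadratically while the perturbation is of order $\eps(t-s_{\bestzas})$; you must support $h$ in $(s_{\bestzas}+\delta,1]$. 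The paper's argument avoids both issues at once: it compares $g$ with the path held constant on $[1-\eps,1]$, which automatically satisfies the constraint and strictly increases $K$ by $\tfrac\eps2 g'(1)^2+o(\eps)$ unless $g'(1)=0$, with no need to know where the maximiser lies. So your proposal is reparable (replace the interiority argument by the truncation comparison, or justify $\nas(0)>0$ and $g_0'(1)\neq0$ properly), but as written the first stage contains a wrong statement and an incompletely justified admissibility step.
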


\begin{proof}
By Lemma \ref{lem:c2opt} we may assume without loss of generality that $g\in C^2_{\text{piecewise}}$. Let $\eps>0$ be small and $g$ be any such function satisfying~\eqref{eqn:constraint}. For $t\in[1-\eps,1]$ we
have $g(t)=g(1)+O(\eps)$, and $g'(t)=g'(1)+O(\eps)$. Therefore
\begin{equation}\label{eqn:k}
\int_{1-\eps}^1 \left(m\beta g(t)^p-\frac12 g'(t)^2\right)\diffd
t=\eps\Big(m\beta g(1)^p-\frac12 g'(1)^2\Big)+o(\eps).
\end{equation}
Now consider the function
$$
\tilde g(t):=
\begin{cases}
g(t),\quad\quad& t\in[0,1-\eps)\\
g(1-\eps), & t\in[1-\eps,1].
\end{cases}
$$
Note that $\tilde g$ satisfies~\eqref{eqn:constraint} because $g$ does, and
\begin{align}
K(\tilde g,1)&=K(g,1-\eps)+\eps m\beta g(1-\eps)^p\nonumber\\
&= K(g,1-\eps)+\eps m\beta g(1)^p+o(\eps)\nonumber\\
&= K(g,1)+\frac\eps2 g'(1)^2+o(\eps).\label{eqn:k'}
\end{align}
But comparing~\eqref{eqn:k} and~\eqref{eqn:k'}, we see that
$K(\tilde g,1)>K(g,1)$ for all sufficiently small
$\eps$, unless $g'(1)=0$, hence the first part of the result.

\medskip

For $s>s_z$ one has
\begin{equation}
\frac12 g_z'(s)^2+m\beta g_z(s)^p=m\beta A
\end{equation}
where $A$ is some constant. For $z=\bestzas$ one has $g'_{\bestzas}\ge0$, $g_{\bestzas}'(1)=0$
and $A=\bestzas^p$. Therefore (for $s>s_z$),
\begin{equation}
\frac{g'_{\bestzas}(s)}{\sqrt{\bestzas^p-g_{\bestzas}(s)^p}}=\sqrt{2m\beta}
\label{38}
\end{equation}
We integrate the above expression with respect to $s$ from $s_{\bestzas}$ to 1 and we make the change of variable
$x=g_{\bestzas}(s)/\bestzas$. We obtain
\begin{equation}
\bestzas^{1-p/2}\int_{2^{-1/p}}^1\frac{\diffd x}{\sqrt{1-x^p}}=\sqrt{2m\beta}(1-s_{\bestzas})
\end{equation}
For the lower limit on the integral, we used that $\frac12 g_z'(s_z)^2=m\beta g_z(s_z)^p$ and hence
$g_z(s_z)^p =A/2$. For $z=\bestzas$ this gives $g_z(s_{\bestzas})=\bestzas2^{-1/p}$.
The value of $s_{\bestzas}$ then comes from the explicit expression
$g_z(s)=r(s)$ when $s\le s_{\bestzas}$.

The expression for $\bestnas$ comes from Theorem~\ref{constrained_id}, which will be proved in Section 8.
\end{proof}

The solution to the unconstrained optimisation problem given in Theorem \ref{unconstrained_thm} is now simple in light of the work above.

\begin{proof}[Proof of Theorem \ref{unconstrained_thm}]
Inspecting the proof of Lemma~\ref{lem:optdiffeqn}, we see that the
solution to this unconstrained problem amongst functions which are
piecewise $C^2$ is given by the function $h_z$ satisfying
\begin{align*}
h''(u)+m\beta ph^{p-1}&=0,\\
h(0)&=0,\\
h(1)&=z.
\end{align*}
A similar argument to that in Lemma \ref{lem:c2opt} then shows that this function is also optimal over $H_1$, and another similar to that in Lemma \ref{lem:optgrowth} gives that the optimal $z$ is that with $h_z'(1)=0$.

The value of $\bestzexp$ is obtained in the same way as $\bestzas$ except
that \eqref{38} must be integrated from 0 to 1 rather than from $s_z$ to~1.
The value of $\bestnexp$ also comes from Theorem~\ref{constrained_id}.
\end{proof}

As an aside, we note that
\begin{equation}
\nexp(0)=2^{-\frac{2p}{2-p}}\bestnexp.
\label{nexp0}
\end{equation}
This can be seen by remarking that the optimal path $h_0$ is symmetrical
around $s=1/2$ and hence that $h'_0(1/2)=0$. The trajectory $h_0$ up to
$s=1/2$ is therefore the trajectory maximising the total population at time
$T/2$ and, given the total population growth rate,
$\nexp(0)=K(h_0,1)=2\times K(h_0,1/2)=2\times
2^{-\frac{2+p}{2-p}}\bestnexp$ which is the same as \eqref{nexp0}.

\section{Further properties of the optimal paths}\label{further_optpaths_sec}

In this section we will prove Theorem~\ref{constrained_id} which states
that both growth rate $\nexp(z)$ and $\nas(z)$ are solutions of the same
differential equation~\eqref{eqK}. We will give only one demonstration for
both quantities, highlighting where necessary the differences between the two
cases. We write in a generic way $K(z)$ for either quantity $\nexp(z)$
and $\nas(z)$. Similarly, $f_z(s)$ stands in this section for either
optimal path $g_z(s)$ or $h_z(s)$ defined respectively in
Theorems~\ref{unconstrained_thm} and~\ref{constrained_thm}. One can write
\begin{equation}
K(z) =
    -\frac12\int_{s_z}^1 f'_z(s)^2\,\diffd s
+m\beta \int_{s_z}^1 f_z(s)^p\,\diffd s
.
\label{Kf}
\end{equation}
where $f_z(s)$ is a solution of
\begin{equation}
f_z''(s)+m\beta p f_z(s) ^{p-1}=0,\qquad f_z(1)=z
\label{eqf}
\end{equation}
and
\begin{equation}
\begin{cases}
s_z=0,\quad f_z(s_z)=0&\text{in the expectation case},\\
f_z(s_z)=r(s_z),\quad f'_z(s_z)=r'(s_z)&\text{in the almost sure case}.
\end{cases}
\label{inif}
\end{equation}
(In the almost sure case, the optimal path is equal to $r(s)$ for $0\le
s\le s_z$
where $s_z$ is a unknown quantity which has to be
solved for. Recall that $r(s)$ is the trajectory of the
almost sure rightmost particle; on this trajectory the population does not
grow, which is why one can start the integrals in~\eqref{Kf} from $s_z$.)

\vspace{3mm}

\noindent
We begin with a simple lemma showing monotonicity of the optimal paths in $z$.

\begin{lem}\label{z_mono}
The optimal paths and their derivatives are monotone in $z$. That is, if $0\leq w\leq z$,
\[f_w(s)\leq f_z(s) \hs\hs\text{and}\hs\hs f'_w(s)\leq f'_z(s).\]
\end{lem}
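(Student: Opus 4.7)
The plan is to prove the function inequality $f_w \le f_z$ by a max/min exchange argument that exploits uniqueness of the optimizer, and then to deduce the derivative inequality from the Euler--Lagrange equation together with the boundary conditions. I would treat the expectation case ($f_z = h_z$) and the almost-sure case ($f_z = g_z$) in parallel, using for the latter the equivalent characterization of $\nas(z)$ through the constraint $|f|\le r$ proved in \eqref{Kaskilling} of Theorem~\ref{constrained_thm}.

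For the function inequality, set $\tilde f := \max(f_z, f_w)$ and $\tilde g := \min(f_z, f_w)$. Both lie in $H_1$, since at almost every $s$ the derivative of $\tilde f$ (respectively $\tilde g$) equals either $f_z'(s)$ or $f_w'(s)$ and hence remains square-integrable. The pointwise identities
\[
\tilde f(s)^p + \tilde g(s)^p = f_z(s)^p + f_w(s)^p,\qquad \tilde f'(s)^2 + \tilde g'(s)^2 = f_z'(s)^2 + f_w'(s)^2
\]
yield $K(\tilde f,1) + K(\tilde g,1) = K(f_z,1) + K(f_w,1)$. Moreover $(\tilde f, \tilde g)$ satisfies all admissibility constraints: $\tilde f(0) = \tilde g(0) = 0$, $\tilde f(1) = z$, $\tilde g(1) = w$, both are nonnegative, and $|\tilde f|, |\tilde g|\le r$ whenever $|f_z|, |f_w| \le r$. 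Optimality of $f_z$ and $f_w$ then forces $K(\tilde f, 1) \le K(f_z, 1)$ and $K(\tilde g, 1) \le K(f_w, 1)$; combined with the displayed identity, both inequalities must be equalities, and the uniqueness clauses of Theorems~\ref{unconstrained_thm} and~\ref{constrained_thm} give $\tilde f = f_z$ and $\tilde g = f_w$, i.e.\ $f_z \ge f_w$ pointwise. In the almost-sure case this also forces $s_z \ge s_w$, since otherwise one would have $f_z < r = f_w$ on $(s_z, s_w)$.

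For the derivative inequality, let $\psi := f_z - f_w \ge 0$. In the expectation case $\psi(0) = 0$; in the almost-sure case $\psi$ vanishes together with $\psi'$ at $s = s_w$. On the common interval where both $f_z$ and $f_w$ satisfy the Euler--Lagrange equation $f''+m\beta p f^{p-1} = 0$, one has $\psi''(s) = -m\beta p\bigl(f_z(s)^{p-1} - f_w(s)^{p-1}\bigr)$. For $p \le 1$ the map $x \mapsto x^{p-1}$ is non-increasing on $(0,\infty)$, so $\psi'' \ge 0$: $\psi$ is convex with $\psi \ge 0$ and vanishing at its left endpoint, which forces $\psi' \ge 0$ throughout (any negative value of $\psi'$ in the convex region would, together with $\psi'$ non-decreasing, drive $\psi$ below zero to the left). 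For $p > 1$ the sign reverses, $\psi$ is concave, and this is the main obstacle since derivative monotonicity no longer follows from the sign of $\psi''$ alone. In this regime I would combine energy conservation $\frac{1}{2}(f_z')^2 + m\beta f_z^p = E_z$ (which gives $E_z \ge E_w$ from $f_z'(0) \ge f_w'(0)$) with a comparison on the phase plane: the level sets $\{\frac{1}{2} v^2 + m\beta f^p = E\}$ are nested closed curves, and tracking the Hamiltonian flow separately on the ascending and (in the overshoot regime) descending branches shows that $(f_z(s), f_z'(s))$ always lies on the strictly enclosing orbit, which ultimately yields $f_z'(s) \ge f_w'(s)$ at each $s$.
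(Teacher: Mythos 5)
Your rearrangement argument for the pointwise inequality $f_w\le f_z$ is correct and genuinely different from the paper's proof: the paper instead supposes $f_w(u_0)>f_z(u_0)$, produces by the intermediate value theorem two points on either side of $u_0$ where the paths agree, and uses the uniqueness of the optimal path (equivalently, of the ODE characterisation) between two agreement points to force $f_w\equiv f_z$ there, a contradiction; the same crossing idea, applied to increments, is then invoked for the derivatives. Your max/min exchange buys a clean equality-case argument and avoids the crossing analysis, but in the almost-sure case one detail should be made explicit: the uniqueness clause of Theorem~\ref{constrained_thm} is stated among paths with $\theta_0(\cdot)=\infty$, whereas your $\tilde f$ is only known to satisfy $|\tilde f|\le r$. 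You need the observation made in the paper just after \eqref{1stline}: a maximiser of the killed problem \eqref{Kaskilling} must satisfy $K(\cdot,t)\ge0$ for all $t$, since otherwise it could be strictly improved while staying below $r$. With that remark (and the standard a.e.\ identity for derivatives on the coincidence set $\{f_z=f_w\}$), the first half is fine.

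The genuine gap is in the derivative inequality for $p\in(1,2)$. Your convexity argument for $\psi=f_z-f_w$ is complete for $p\le1$ (and on $[s_w,s_z]$ in the almost-sure case, where $\psi''=m\beta p\,(r^{p-1}+f_w^{p-1})\ge0$ for every $p$), but for $p>1$ the phase-plane step is only a sketch, and its central claim does not follow from what you have established. Given $E_z\ge E_w$ and $f_z\ge f_w$, the sign of $f_z'(s)-f_w'(s)$ at a fixed time is governed by $\frac{1}{2}\big((f_z'(s))^2-(f_w'(s))^2\big)=(E_z-E_w)-m\beta\big(f_z(s)^p-f_w(s)^p\big)$, and the inequality you need points in \emph{opposite} directions on the two branches: on the ascending branch (both derivatives nonnegative) you need this quantity to be $\ge0$, while on the descending branch you need it to be $\le0$. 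So ``lying on the enclosing orbit'' cannot by itself yield $f_z'(s)\ge f_w'(s)$ at each $s$; moreover, on the common Euler--Lagrange region one has $\frac{\diffd}{\diffd s}\big(f_z'-f_w'\big)=-m\beta p\big(f_z^{p-1}-f_w^{p-1}\big)\le0$ for $p>1$, so the derivative gap is non-increasing and nothing in your argument prevents it from turning negative before time $1$ (concavity of $\psi$ with $\psi\ge0$, $\psi(1)=z-w\ge0$ is compatible with $\psi'<0$ near $1$). Closing this requires genuinely more input --- for instance the timing information that the apex time increases with the energy when $p<2$, combined with the endpoint constraints, or the paper's route: assume $f_w(u_2)-f_w(u_1)>f_z(u_2)-f_z(u_1)$ for some $u_1<u_2$ and derive a contradiction via a crossing argument and uniqueness of the optimal path between agreement points. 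As written, the case $p\in(1,2)$ of the derivative monotonicity is not proved.
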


\begin{proof}
Suppose there exists $u_0$ such that $f_w(u_0)>f_z(u_0)$. By the intermediate value theorem there exists $s\in(s_z\vee s_w, u_0)$ and $t\in (u_0,1)$ such that $f_w(s)=f_z(s)$ and $f_w(t)=f_z(t)$. Then by our characterisation of the optimal functions in terms of solutions to differential equations, we must have $f_w(u) = f_z(u)$ for all $u\in[s,t]$. This contradicts the fact that $f_w(u_0)>f_z(u_0)$. A similar proof works for $f'_z$ by considering hypothetical points $u_1<u_2$ such that $f_w(u_2) - f_w(u_1) > f_z(u_2) - f_w(u_1)$.
\end{proof}

\begin{lem}\label{prime}
One has $K'(z)=-f_z'(1)$.
\end{lem}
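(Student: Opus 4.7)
Proof plan for Lemma \ref{prime}:

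The plan is to differentiate the formula
\[
K(z) = \int_{s_z}^1 \Bigl[m\beta f_z(s)^p - \tfrac12 f_z'(s)^2\Bigr]\,\diffd s
\]
with respect to $z$ and use the Euler--Lagrange equation~\eqref{eqf} to kill the interior terms, so that only boundary contributions survive. Writing $\phi_z(s):=\partial_z f_z(s)$ and $\phi_z'(s):=\partial_z f_z'(s)$ (smoothness of $z\mapsto f_z$ needs to be justified; see below), Leibniz's rule gives
\[
K'(z) = -s_z'(z)\Bigl[m\beta f_z(s_z)^p - \tfrac12 f_z'(s_z)^2\Bigr]
+ \int_{s_z}^1 \bigl[m\beta p\, f_z(s)^{p-1}\phi_z(s) - f_z'(s)\phi_z'(s)\bigr]\,\diffd s.
\]
The first bracket vanishes: in the expectation case $s_z=0$ is constant, and in the almost sure case the initial condition~\eqref{inif} together with the fact that $r$ satisfies $\tfrac12 r'(s)^2=m\beta r(s)^p$ makes the integrand zero at $s_z$.

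Next, I would integrate the kinetic term by parts:
\[
\int_{s_z}^1 f_z'(s)\phi_z'(s)\,\diffd s
= f_z'(1)\phi_z(1) - f_z'(s_z)\phi_z(s_z) - \int_{s_z}^1 f_z''(s)\phi_z(s)\,\diffd s.
\]
Substituting back and invoking $f_z''+m\beta p f_z^{p-1}=0$ from~\eqref{eqf}, the remaining integral cancels and we are left with
\[
K'(z) = -f_z'(1)\phi_z(1) + f_z'(s_z)\phi_z(s_z).
\]
Since $f_z(1)=z$, differentiating yields $\phi_z(1)=1$. For the boundary term at $s_z$: in the expectation case $f_z(0)=0$ for all $z$ gives $\phi_z(0)=0$ directly; in the almost sure case, differentiating the identity $f_z(s_z)=r(s_z)$ in $z$ gives $f_z'(s_z)s_z'(z)+\phi_z(s_z)=r'(s_z)s_z'(z)$, which combined with the $C^1$ matching $f_z'(s_z)=r'(s_z)$ from Lemma~\ref{lem:derivative_cts} (and Lemma~\ref{lem:leaverightmost}) yields $\phi_z(s_z)=0$. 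In either case we conclude $K'(z)=-f_z'(1)$.

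The main obstacle is justifying the differentiability in $z$ needed to write $\phi_z=\partial_z f_z$. For the expectation case this is straightforward: $h_z$ solves a standard second-order ODE on $[0,1]$ with boundary data $h_z(0)=0$, $h_z(1)=z$, and smooth dependence on $z$ follows from standard ODE theory. For the almost sure case one must also treat the implicitly defined switching time $s_z$, but this too depends smoothly on $z$ because it is determined by the $C^1$ matching condition $g_z'(s_z^+)=r'(s_z)$ and the implicit function theorem applies as soon as one checks the appropriate transversality (this can be done using the monotonicity in $z$ established in Lemma~\ref{z_mono}). Alternatively, and more robustly, one can bypass regularity issues by a perturbation argument: compare $K(z+\eps)-K(z)$ directly with $-\eps f_z'(1)$ by constructing an explicit near-optimal competitor that agrees with $f_z$ on $[s_z,1-\delta]$ and interpolates linearly to $z+\eps$ on $[1-\delta,1]$, showing both upper and lower bounds of the form $K(z+\eps)-K(z)=-\eps f_z'(1)+o(\eps)$ as $\eps\to 0$.
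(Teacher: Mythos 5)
Your proposal is correct and follows essentially the same route as the paper: differentiate the integral formula for $K(z)$ in $z$, note the boundary contribution at $s_z$ vanishes (trivially in the expectation case, via $\frac12 r'^2=m\beta r^p$ in the almost sure case), integrate by parts and kill the interior term with the Euler--Lagrange equation~\eqref{eqf}, leaving $-f_z'(1)\,\partial_z f_z(1)=-f_z'(1)$ since $\partial_z f_z(s_z)=0$ (exactly the argument in the paper's footnote). Your extra care about justifying differentiability in $z$ (and the perturbation fallback) goes slightly beyond the paper, which simply invokes the monotonicity of Lemma~\ref{z_mono}, but the substance of the proof is the same.
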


\begin{proof}
By Lemma \ref{z_mono} we may differentiate \eqref{Kf} with respect to $z$. One gets
\begin{equation}
\begin{split}
K'(z)=
- \int_{s_z}^1 f'_z(s)\frac{\partial f'_z}{\partial z}(s)\,\diffd s
+m\beta p \int_{s_z}^1 f_z(s)^{p-1}\frac{\partial f_z}{\partial z}(s)\,\diffd s
\\
-\frac{\diffd s_z}{\diffd
z}\left[m\beta f_z(s_z)^p-\frac12f'_z(s_z)^2\right].
\end{split}
\end{equation}
The third term in the right-hand side is zero, either because $\diffd s_z/\diffd
z=0$ (expectation case) or because the square bracket is zero (almost
sure case, see \eqref{eqr(s)} with \eqref{inif}). Integrating the
first term by parts leads to
\begin{equation}
K'(z)=
\int_{s_z}^1 \left[f''_z(s)+m\beta p  f_z(s)^{p-1}
  \right]
\frac{\partial f_z}{\partial z}(s)\,\diffd s
-f'_z(1)\frac{\partial f_z}{\partial z}(1)+f'_z(s_z)\frac{\partial
f_z}{\partial z}(s_z).
\end{equation}
The integral is null because of \eqref{eqf}. As $f_z(1)=z$ for all $z$, one has
$\partial f_z/\partial z(1)=1$ in the second term. The third term is also
null because $\partial f_z/\partial z(s_z)=0$. This is trivial in the
expectation case as $f_z(s_z)=f_z(0)=0$ and it is also true in the almost sure case because\footnote{To be more verbose, as
$f_z(s_z)=r(s_z)$ one has
$$\frac{\diffd}{\diffd z}
f_z(s_z)= r'(s_z)\frac{\diffd s_z}{\diffd z}
 =  \frac{\partial f_z}{\partial z}(s_z)+
f'_z(s_z)\frac{\diffd s_z}{\diffd z}.
$$
Using $f'(s_z)=r'(s_z)$ gives $\partial f_z/\partial z(s_z)=0$.}
$f_z(s)$ is independent of $z$ up to $s=s_z$.
\end{proof}

\begin{proof}[Proof of Theorem \ref{constrained_id}]
We now establish relations between the integrals in \eqref{Kf}. Integrating the second integral by parts and applying Lemma \ref{prime}, one gets
\begin{equation}
 \int_{s_z}^1f'_z(s)^2\,\diffd s=-z K'(z)-f_z(s_z)f_z'(s_z)
-\int_{s_z}^1f_z(s)f''_z(s)\,\diffd s.
\end{equation}
Then, using \eqref{eqf},
\begin{equation}
 \int_{s_z}^1f'_z(s)^2\,\diffd s
-m\beta p \int_{s_z}^1f_z(s)^p\,\diffd s
=-z K'(z)-f_z(s_z)f_z'(s_z).
\label{lin1}
\end{equation}
We now multiply \eqref{eqf} by $f'_z(s)$ and integrate:
\begin{equation}
\frac12f_z'(s)^2+m\beta f_z(s)^p = c.
\label{withCste}
\end{equation}
The integration constant $c$ can be obtained by evaluating at $s=1$ or at $s=s_z$:
\begin{equation}
c=\frac12K'(z)^2+m\beta z^p=\frac12f'_z(s_z)^2+m\beta f_z(s_z)^p.
\end{equation}
Then, integrating \eqref{withCste} between $s_z$ and 1,
\begin{multline}
\frac12\int_{s_z}^1f'_z(s)^2\,\diffd s
+m\beta\int_{s_z}^1f_z(s)^p\,\diffd s
=(1-s_z)c
\\
=\frac12K'(z)^2+m\beta z^p-s_z\left[\frac12f'_z(s_z)^2+m\beta f_z(s_z)^p\right].
\label{lin2}
\end{multline}
where both expressions for $c$ were used.
Now, the right linear combination of \eqref{lin1} and \eqref{lin2}
gives $K(z)$:
multiplying \eqref{lin1} by $-2/(2+p)$ and \eqref{lin2} by $(2-p)/(2+p)$, adding, and using (\ref{Kf}), one gets
\begin{multline}
K(z) = \frac{2z K'(z)+(2-p)\big[K'(z)^2/2+m\beta z^p\big]}{2+p}
\\+\frac {2 f_z(s_z) f_z'(s_z)-(2-p)s_z\big[f'_z(s_z)^2/2+m\beta
f_z(s_z)^p\big]}{2+p}.
\label{endK}
\end{multline}
The second term in the right-hand side is zero. This is trivial in the expectation
case as $s_z=0$ and $f(s_z)=0$, and can easily be shown in the almost
sure case: replace all $f_z$ by $r$ using \eqref{inif}, replace $m\beta
r(s_z)^p$ by another $r'(s_z)^2/2$ from \eqref{eqr(s)} and after
simplification one gets that the term is zero if
$r'(s_z)/r(s_z)=2/[(2-p)s_z]$, which is true as can be seen from differentiating the
logarithm of \eqref{r(s)} with respect to $s$. Then, one checks easily that \eqref{endK} is
equivalent to \eqref{eqK}.

Note that for the optimal endpoint $\hat z$ one
has $K'(\hat z)=0$ and hence obtains
\begin{equation}
K(\hat z)=\frac{2-p}{2+p}m\beta\hat z^p,
\end{equation}
as stated in Theorems~\ref{unconstrained_thm} and~\ref{constrained_thm}.

It now remains to prove that \eqref{eqK} can be rewritten, for $z\ge0$, as
in \eqref{eqK2} with a plus sign in front of the square root. For this, it
is sufficient to show that $K'(0^+)\ge0$.

For $p>0$, we have seen both in the expectation and the almost-sure
case that the non-negative optimal path $f_0(s)$ going to the origin is not
identically zero but goes away from the origin to take advantage of higher
branching rates. Consider this positive optimal path and let $z_m=\max_s
f_0(s)>0$. For $0<z\le z_m$, consider the path $\tilde f$ which is equal to $f_0$ up
to the point where $z$ is reached for the last time and which is
identically equal to $z$ past that point. Clearly, one has $K(\tilde
f,1)>K(f_0,1)$ and $\theta_0(\tilde f)\ge \theta_0(f_0)$. This implies that
$K(z)> K(0)$ for all positive $z$ smaller than $z_m$ and hence that
$K'(0^+)\ge0$. (Note that this argument holds both in the expectation and
almost-sure cases, although the value of $z_m$ is not the same.) For $p=0$,
optimal paths in expectation and almost-sure cases coincide, one
has the explicit solution $K(z)=m\beta -z^2/2$ and $K'(0)=0$, the square
root in \eqref{eqK2} is zero and the sign is of no importance.
\end{proof}

\bibliographystyle{plain}
\def\cprime{$'$}

\end{document}